\documentclass[10pt,a4paper]{article}

\usepackage{amsopn}

 \usepackage{graphics}
\usepackage{graphicx}
\usepackage{epsfig}

\usepackage{amssymb}
\usepackage{amsthm}
\usepackage{amsmath}
\usepackage{amsfonts}
\usepackage{lipsum}
\usepackage[left=2.5cm,right=2.5cm,bottom=3.5cm,top=3.5cm]{geometry}
\usepackage{empheq}
\usepackage{epstopdf}
\usepackage{changepage}
\usepackage{rotating}
\usepackage{adjustbox}
\usepackage{graphbox}
\usepackage{color}
\usepackage{xcolor}
\usepackage{dsfont}
\usepackage{dutchcal} 
\usepackage{float}
\usepackage{resizegather}
\usepackage{multicol}
\usepackage{booktabs} 
\usepackage{colortbl} 
\usepackage{multirow} 
\usepackage{longtable,tabu} 
\usepackage{hhline}   
\usepackage{anyfontsize} 
\usepackage{everysel}
\usepackage{psfrag}
\usepackage{titlesec}
\usepackage{bbm}
\usepackage{bm}
\usepackage{pgf,tikz}
\usetikzlibrary{arrows}

\newtheorem{theorem}{Theorem}


\newcommand{\q}{\mathbf{q}}
\newcommand{\p}{\mathbf{p}}
\newcommand{\f}{\mathbf{f}}
\newcommand{\g}{\mathbf{g}}

\renewcommand{\v}{\mathbf{v}}

\newcommand{\err}{\mathcal{r}}

\newcommand{\normal}{\mathbf{n}} 
\newcommand{\halb}{\frac{1}{2}}

\allowdisplaybreaks


\title{A new thermodynamically compatible finite volume scheme for magnetohydrodynamics}


\author{Saray Busto\thanks{Department of Applied Mathematics I, Universidade de Vigo, Campus As Lagoas, 36310 Vigo, Spain (\email{saray.busto@unitn.it}).} 
	\and 	
	Michael Dumbser\thanks{Department of Civil, Environmental and Mechanical Engineering, University of Trento, Via Mesiano 77, 38123 Trento, Italy (\email{michael.dumbser@unitn.it}).} 
}

\begin{document}

\begin{center}
	\textbf{ \Large{A new thermodynamically compatible finite volume scheme for magnetohydrodynamics} }
	
	\vspace{0.5cm}
	{S. Busto\footnote{saray.busto@uvigo.es}, M. Dumbser\footnote{ michael.dumbser@unitn.it}}
	
	\vspace{0.2cm}
	{\small
		\textit{$^{(1)}$ Department of Applied Mathematics I, Universidade de Vigo, Campus As Lagoas, 36310 Vigo, Spain}
		
		\textit{$^{(2)}$ Department of Civil, Environmental and Mechanical Engineering, University of Trento, Via Mesiano 77, 38123 Trento, Italy }
	}
\end{center}

\hrule
\vspace{0.4cm}

\begin{center}
	\textbf{Abstract}
\end{center} 

\vspace{0.1cm}
In this paper we propose a novel thermodynamically compatible finite volume scheme for the numerical solution of the equations of magnetohydrodynamics (MHD) in one and two space dimensions. As shown by Godunov in 1972, the MHD system can be written as overdetermined symmetric hyperbolic and thermodynamically compatible (SHTC) system. More precisely, the MHD equations are symmetric hyperbolic in the sense of Friedrichs and satisfy the first and second principles of thermodynamics.  
In a more recent work on SHTC systems, \cite{Rom1998}, the entropy density is a primary evolution variable, and total energy conservation can be shown to be a \textit{consequence} that is obtained after a judicious linear combination of all other evolution equations. The objective of this paper is to mimic the SHTC framework also on the discrete level by directly discretizing the \textit{entropy inequality}, instead of the total energy conservation law, while total energy conservation is obtained via an appropriate linear combination as a \textit{consequence} of the thermodynamically compatible discretization of all other evolution equations. As such, the proposed finite volume scheme satisfies a discrete cell entropy inequality \textit{by construction} and can be proven to be nonlinearly stable in the energy norm due to the discrete energy conservation. In multiple space dimensions the divergence-free condition of the magnetic field is taken into account via a new thermodynamically compatible generalized Lagrangian multiplier (GLM) divergence cleaning approach. 
The fundamental properties of the scheme proposed in this paper are mathematically rigorously proven. The new method is applied to some standard MHD benchmark problems in one and two space dimensions, obtaining good results in all cases.

\vspace{0.2cm}
\noindent \textit{Keywords:} 
	thermodynamically compatible finite volume schemes; 
	thermodynamically compatible GLM divergence cleaning;  
    semi-discrete Godunov formalism; 	
	cell entropy inequality; 
	nonlinear stability in the energy norm; 	 
	magnetohydrodynamics (MHD)

\vspace{0.4cm}

\hrule

\section{Introduction}
In 1961 Godunov published a seminal paper about \textit{an interesting class of quasilinear systems}  \cite{God1961}, where the connection between symmetric hyperbolicity, in the sense of Friedrichs, \cite{FriedrichsSymm}, and thermodynamic compatibility was discovered for the first time; well before 
the paper of Friedrichs \& Lax \cite{FriedrichsLax} on the same subject. The results of \cite{God1961} 
apply to the Euler equations of compressible gasdynamics and to the shallow water equations, but not to more complex systems such as MHD or nonlinear elasticity. Later, in 1972, Godunov extended his framework also to the equations of magnetohydrodynamics (MHD), see \cite{God1972MHD}, where the divergence-free condition of the magnetic field plays a crucial role in the symmetrization process of the equations and in the proof of thermodynamic compatibility.  
Later, Godunov \& Romenski and collaborators extended the theory of symmetric hyperbolic and thermodynamic compatible (SHTC) systems to a wide class of mathematical models, including nonlinear hyperelasticity, compressible multi-phase flows and relativistic fluid and solid mechanics, see 
\cite{GodunovRomenski72,GodRom2003,Rom1998,RomenskiTwoPhase2010,Godunov2012,GRGPR}. 
In most of the above-mentioned references on SHTC systems, the \textit{entropy} density is a \textit{primary evolution variable} and the \textit{total energy} conservation law is obtained as a \textit{consequence} of all the other evolution equations, due to the privileged role the total energy plays in the underlying variational principle from which all SHTC systems can be derived. 

On the discrete level, most existing numerical methods for hyperbolic PDE discretize the total energy conservation law directly and try to achieve the discrete compatibility with the entropy inequality as a consequence, leading to the so-called entropy preserving and entropy-stable schemes, based on the seminal ideas of Tadmor \cite{Tadmor1}.  High order entropy-compatible schemes can be found in 
\cite{FjordholmMishraTadmor,CastroFjordholm,GassnerSWE,ShuEntropyMHD1,ShuEntropyMHD2,GassnerEntropyGLM,ChandrashekarKlingenberg2016,ray2016,Ray2017,Hennemann2021}, while entropy-compatible    
methods for non-conservative hyperbolic PDE were introduced, e.g., in \cite{Fjordholm2012,AbgrallBT2018}. 
Very recently, Abgrall presented a completely general framework for the construction of schemes that satisfy additional extra conservation laws, see \cite{Abgrall2018}.  

Up to now, finite volume schemes that directly discretize the entropy inequality and which obtain total energy conservation as a consequence are still very rare. Some first attempts have been recently documented in \cite{SWETurbulence} for turbulent shallow water flows and in \cite{HTCGPR} for the Euler equations and for the GPR model of continuum mechanics. 
However, to the very best of our knowledge, for the MHD system there was no scheme yet that discretizes the entropy inequality directly and that obtains total energy conservation as a mere consequence of a thermodynamically compatible discretization of the other evolution equations. It is thus the declared objective of this paper to construct such a scheme. We stress that the aim of this paper is not to introduce better or more efficient numerical schemes compared to existing methods, but to introduce a new and \textit{radically different concept}, which is the direct discretization of the entropy inequality in order to obtain the discrete total energy conservation law as a consequence. 

The rest of this paper is organized as follows. In Section \ref{sec.model}, we present the augmented MHD equations including a thermodynamically compatible GLM divergence cleaning, following the seminal ideas of Munz \textit{et al.} \cite{MunzCleaning,Dedneretal}. In Section \ref{sec.scheme.1d}, the construction of a thermodynamically compatible semi-discrete finite volume scheme is explained in great detail for the one-dimensional case. Next, in Section \ref{sec.scheme.2d}, an extension to the general multi-dimensional case is presented, together with a proof of the entropy inequality satisfied by the scheme and a proof of its nonlinear stability in the energy norm. In Section \ref{sec.results}, we present some numerical results for standard MHD benchmark problems in one and two space dimensions. The conclusions and an outlook to future work are given in Section \ref{sec.Conclusions}.

\section{The MHD equations with compatible GLM divergence cleaning}
\label{sec.model}

 We consider a formulation of the magnetohydrodynamics (MHD) equations augmented with a thermodynamically compatible generalized Lagrangian multiplier (GLM) divergence cleaning, see  \cite{MunzCleaning,Dedneretal,HyperbolicDispersion}, and including parabolic vanishing viscosity terms. Accordingly, the governing PDE system reads as follows:  
\begin{subequations}\label{eqn.MHD}
	\begin{align}
	& \frac{\partial \rho}{\partial t}+\frac{\partial (\rho v_k)}{\partial x_k} 
	 \textcolor{blue}{-\frac{\partial }{\partial x_m} \epsilon \frac{\partial \rho}{\partial x_m}} 
	= 0,\label{eqn.conti}\\[2mm]
	&\frac{\partial \rho v_i}{\partial t}+\frac{\partial \left(\rho v_i v_k + p \, \delta_{ik}  
		\textcolor{red}{\, + \, \frac{1}{2} B_m B_m \delta_{ik} - B_i B_k } \right)}{\partial x_k}  \textcolor{blue}{- \frac{\partial }{\partial x_m} \epsilon \frac{\partial \rho v_i}{\partial x_m}}  
	= 0, \label{eqn.momentum}\\[2mm]
	& 	\frac{\partial \rho S}{\partial t}+\frac{\partial \left( \rho S v_k \right)}{\partial x_k}  \textcolor{blue}{- \frac{\partial }{\partial x_m} \epsilon \frac{\partial \rho S}{\partial x_m}} 
= \textcolor{blue}{\Pi} \,   \geq 0,   \label{eqn.entropy} \\[2mm]		
	&\textcolor{red}{\frac{\partial B_{i}}{\partial t} + \frac{\partial \left( B_i v_k - v_i B_k \right) }{\partial x_k} + 
	v_i \frac{\partial B_{k}}{\partial x_k} +  c_h \frac{\partial \varphi}{\partial x_i} }  \textcolor{blue}{- \frac{\partial }{\partial x_m} \epsilon \frac{\partial B_i}{\partial x_m}} 
	=  0 ,\label{eqn.B}\\[2mm]
	& \textcolor{red}{ \frac{\partial \varphi}{\partial t} + v_k \frac{\partial \varphi }{\partial x_k} + 
	\frac{c_h}{\rho} \frac{\partial B_k}{\partial x_k} } 
    \textcolor{blue}{- \frac{\partial }{\partial x_m} \epsilon \frac{\partial \varphi}{\partial x_m}} 
    = 0, \label{eqn.phi}\\[2mm]
	& \frac{\partial \mathcal{E}}{\partial t}+\frac{\partial \left( \mathcal{E}  v_k + v_i (p \, \delta_{ik} 
		 \textcolor{red}{\, + \frac{1}{2} B_m B_m \delta_{ik} - B_i B_k }) 
		 \textcolor{red}{+ c_h \varphi B_k  } \right)}{\partial x_k} 
	   \textcolor{blue}{- \frac{\partial }{\partial x_m} \epsilon \frac{\partial \mathcal{E}}{\partial x_m}}  
	   = 0.  \label{eqn.energy} 
	\end{align}
\end{subequations}
where the black terms correspond to the Euler subsystem, the red terms are due to the presence of the magnetic field $B_i$ and the divergence cleaning scalar $\varphi$, while the vanishing viscosity terms are highlighted in blue. 
In the overdetermined system above $\mathbf{q}=\{q_i\} = (\rho, \rho v_i, \rho S, 
\textcolor{red}{B_{i}}, \textcolor{red}{\varphi})^T$ denotes the state vector, the total energy 
potential is $\mathcal{E} = \rho E =\mathcal{E}_1  +\mathcal{E}_2 \textcolor{red}{\, + \, 
\mathcal{E}_3 + \mathcal{E}_4 } $ with $\mathcal{E}_i = \rho E_i$, $\epsilon \geq 0$ is a vanishing viscosity coefficient, $c_h \geq 0$ is the divergence cleaning speed and the non-negative entropy production term due to the viscous terms is given by 
\begin{equation}
   \textcolor{blue}{\Pi = \frac{\epsilon}{T} \, \frac{\partial q_i}{\partial x_m} \,\, \partial^2_{q_i q_j} 
   	\mathcal{E} \,  \frac{\partial q_j}{\partial x_m} \geq 0}.  
\end{equation}
 Due to $\epsilon \geq 0$ and since we assume the temperature to be positive, $T > 0$, and \textcolor{black}{assuming further that the total energy potential is convex and hence its Hessian} is, at least, positive semi-definite, i.e. $\mathcal{H}_{ij}:=\partial^2_{q_i q_j} \mathcal{E} \geq 0$, the entropy production $\Pi$ is non-negative. 
Throughout this paper, we use the notations $\partial_p = \partial / \partial 
	p$ 
	and $\partial^2_{pq} = \partial^2 / (\partial p \partial q)$ for the first and second partial derivatives w.r.t. generic coordinates or quantities $p$ and $q$, which may also be vectors or components of a vector. Furthermore, 
	in the entire paper, we make use of the Einstein summation convention over repeated indices. Last but not least, in some occasions, we also use bold face symbols in order to denote vectors, e.g. $\mathbf{q}=\{q_i\}$.  
The four contributions to the total energy density, including a contribution from the cleaning scalar $\varphi$, are 
\begin{equation}
	\mathcal{E}_1 = \frac{\rho^\gamma}{\gamma-1} e^{S/c_v}, \quad 
	\mathcal{E}_2 = \halb \rho v_i v_i, 	
	\quad 
	\mathcal{E}_3 = \halb B_m B_m, 	
	\quad 
	\mathcal{E}_4 = \halb \rho \varphi^2.   	
\end{equation}
The vector of thermodynamic dual variables reads $\mathbf{p} = \mathcal{E}_\q = \left( r, 
v_i, T, 
\textcolor{red}{\beta_{i}}, \textcolor{red}{\psi} \right)^T$, with  
\begin{equation}
	r = \partial_{\rho}  \mathcal{E}, 
	\quad 
	v_i = \partial_{\rho v_i}  \mathcal{E}, 
	\quad 
	T = \partial_{\rho S}  \mathcal{E}, 
	\quad 
	\beta_{i} = \partial_{B_i}  \mathcal{E},  
	\quad 
	\psi  = \partial_{\varphi}  \mathcal{E}. 
\end{equation}
Finally, the purely hydrodynamic pressure is defined as
$
p = \rho^2 \frac{\partial E}{\partial \rho},  
$ \textcolor{black}{with $E$ the specific total energy}.  
After some calculations one can verify that \eqref{eqn.energy} is a consequence of \eqref{eqn.conti}-\eqref{eqn.phi} in the sense that 
\begin{equation}\label{eqn.sum}
\eqref{eqn.energy} = 
 r \cdot \eqref{eqn.conti} + 
 v_i \cdot \eqref{eqn.momentum} +
 T \cdot \eqref{eqn.entropy} + 
 \beta_{i} \cdot \eqref{eqn.B} + 
 \psi \cdot \eqref{eqn.phi}. 
\end{equation}

\section{Thermodynamically compatible semi-discrete finite volume scheme in one space dimension}
\label{sec.scheme.1d}
For the sake of clarity and in order to facilitate the reading, we first present the construction of our thermodynamically compatible schemes step by step in one space dimension only, using the different colors in \eqref{eqn.MHD} as guidance. We start with the inviscid Euler subsystem (black terms), then including the viscous terms (blue) and finally adding the discretization of the magnetic field and the cleaning scalar (red terms).  
Throughout this paper we use lower case subscripts, $i, \, j, \, k$, for tensor 
indices, while lower case superscripts, $\ell$, refer to the spatial discretization index. 
We denote the spatial control volumes in 1D by $\Omega^{\ell}=[x^{\ell-\halb},x^{\ell+\halb}]$
and $\Delta x = x^{\ell+\halb}-x^{\ell-\halb}$ is the uniform mesh spacing. 

\subsection{Semi-discrete Godunov formalism for the Euler subsystem}\label{sec:disc.god.form}
The Godunov form \cite{God1961} of the inviscid Euler subsystem reads  
\begin{gather} 
\label{eqn.par0} 
\left( \partial_{\p} L \right)_t + \partial_x \left( \partial_{\p} (v_1 L) \right) = 0, \\[2mm]
\q = \partial_{\p} L, \qquad \p = \partial_{\q} \mathcal{E}, \qquad \f = \partial_{\p} (v_1 L), \qquad F_G = \p \cdot \f - v_1 L.
\label{eqn.par02} 
\end{gather}
The so-called generating potential $L$, which is the Legendre transform of the total energy density, which in this section is only related to the one of the Euler subsystem, i.e. $\mathcal{E} = \mathcal{E}_{1} + \mathcal{E}_{2}$, is defined as 
\textcolor{black}{
\begin{equation}
	L = \p \cdot \q - \mathcal{E}.
\end{equation}	    
} 
A semi-discrete finite volume scheme for \eqref{eqn.par0} reads  
\begin{equation}
\label{eqn.god_sd1d}
\frac{d}{d t} \q^{\ell} = -\frac{\f^{\ell+\halb} - \f^{\ell-\halb}}{\Delta x} 
= -\frac{\left( \f^{\ell+\halb} - \f^{\ell}\right) -\left( \f^{\ell-\halb} - \f^{\ell}\right) }{\Delta x}
\end{equation}
with $\f^{\ell}=\f(\q^{\ell})$ and $\f(\q) = (\rho v_1, \rho v_i v_1 + p \delta_{i1}, 
\rho S v_1, \mathbf{0}, 0 )^T$, the fluxes of the Euler subsystem and $ F_G = v_1 (\mathcal{E}_1 + \mathcal{E}_2 + p)$ the associated total energy flux. To obtain a discrete total energy conservation law as a consequence of the discretization of \eqref{eqn.conti}-\eqref{eqn.entropy}, see \eqref{eqn.sum}, we compute the dot product of the discrete dual variables, $\p^{\ell}= \partial_\q \mathcal{E}(\q^{\ell})$, with the semi-discrete scheme \eqref{eqn.god_sd1d}:  
\begin{equation} 
\label{eqn.fluct.pt_sd1d} 
\p^\ell \cdot \frac{d}{dt} \q^\ell = \frac{d}{dt}  \mathcal{E}^\ell = - \p^\ell \cdot \frac{ \left(\f^{\ell+\halb} - \f^{\ell}\right) + \left( \f^\ell - \f^{\ell-\halb} \right) }{\Delta x} = - \frac{D_{\mathcal{E}}^{\ell+\halb,-} + D_{\mathcal{E}}^{\ell-\halb,+}}{\Delta x}.
\end{equation}
We define the energy fluctuations  
$D_{\mathcal{E}}^{\ell+\halb,-} = \p^{\ell} \cdot ( \f^{\ell+\halb} - \f^{\ell} )$, 
$D_{\mathcal{E}}^{\ell-\halb,+} = \p^{\ell} \cdot (\f^{\ell} - \f^{\ell-\halb} ) $
which must satisfy the consistency property 
\begin{equation}
\label{eqn.F.cond} 
D_{\mathcal{E}}^{\ell+\halb,-} + D_{\mathcal{E}}^{\ell+\halb,+} = 
\p^{\ell} \cdot ( \f^{\ell+\halb}  - \f^{\ell} ) + \p^{\ell+1}  \cdot (\f^{\ell+1} - \f^{\ell+\halb} ) =  F_G^{\ell+1} - F_G^{\ell}, 
\end{equation} 
in order to obtain a conservative discretization of \eqref{eqn.energy}. In \eqref{eqn.F.cond} $F_G^{\ell}$ is the discrete total energy flux $F_G$ in cell $\Omega^\ell$. Using \eqref{eqn.par02}, i.e. 
$\f^{\ell+\halb} = \partial_{\p} (v_1 L)^{\ell+\halb}$ and 
$F_G^{\ell} = \p^{\ell} \cdot \f^{\ell} - (v_1 L)^{\ell}$, one obtains 
\begin{equation*} 
- \partial_{\p} (v_1 L)^{\ell+\halb} \cdot \left( \p^{\ell+1} - \p^{\ell} \right)  + \p^{\ell+1} \cdot \f^{\ell+1} - \p^{\ell} \cdot \f^{\ell} =  
\p^{\ell+1} \cdot \f^{\ell+1} - (v_1 L)^{\ell+1} - \p^{\ell} \cdot \f^{\ell} + (v_1 L)^{\ell}.  
\end{equation*} 
Hence, the numerical flux $\f^{\ell+\halb}$ must verify the Roe-type property, 
\begin{equation}
\label{eqn.fluxcondition_sd1d}
\f^{\ell+\halb} \cdot \left( \p^{\ell+1} - \p^{\ell} \right)  =  \partial_{\p} (v_1 L)^{\ell+\halb} \cdot \left( \p^{\ell+1} - \p^{\ell} \right) = (v_1 L)^{\ell+1} - (v_1 L)^{\ell}. 
\end{equation}
Making use of the basic ideas of path conservative schemes, see \cite{Castro2006,Pares2006} for details, the path integral of the flux $\f = \partial_{\p} (v_1 L)$ satisfies the identity 
\begin{equation}
\label{eqn.pathint_sd1d}
(v_1 L)^{\ell+1} - (v_1 L)^{\ell} = \int \limits_{\p^{\ell}}^{\p^{\ell+1}} \partial_{\p} (v_1 L) \cdot d\p = \int \limits_{0}^{1} \partial_{\p} (v_1 L) \cdot \frac{\partial \boldsymbol{\psi}}{\partial s} ds,
\end{equation} 
for any path $\boldsymbol{\psi}(s), \, s\in[0,1]$ in phase space. Following \cite{SWETurbulence,HTCGPR} we choose the simple straight line segment path in $\p$ variables, i.e.  
\begin{equation} 
\label{eqn.path1_sd1d} 
\boldsymbol{\psi}(s) = \p^{\ell} + s \left( \p^{\ell+1} - \p^{\ell} \right), \qquad \frac{\partial \boldsymbol{\psi}}{\partial s} = \p^{\ell+1} - \p^{\ell}, \qquad 0 \leq s \leq 1.
\end{equation}  
Inserting the segment path \eqref{eqn.path1_sd1d} into \eqref{eqn.pathint_sd1d} yields 
\begin{equation}
\label{eqn.pathint.seg1_sd1d}
(v_1 L)^{\ell+1} - (v_1 L)^{\ell}  = \left( \int \limits_{0}^{1} \f(\boldsymbol{\psi}(s)) 
ds \right) \cdot \left( \p^{\ell+1} - \p^{\ell} \right). 
\end{equation} 
As a result, the thermodynamically compatible numerical flux for the inviscid Euler subsystem that guarantees \eqref{eqn.fluxcondition_sd1d} by construction reads  
\begin{equation}
\label{eqn.p.scheme_sd1d}
\f^{\ell+\halb}_\p = \int \limits_{0}^{1} \f(\boldsymbol{\psi}(s)) ds = 
\left( f^{\ell+\halb}_\rho, \f^{\ell+\halb}_{\rho \v}, f^{\ell+\halb}_{\rho S}, \mathbf{0}, 0 \right)^T,  
\end{equation} 
and can be approximated using a sufficiently accurate numerical quadrature rule, see e.g. \cite{OsherNC}. In particular, throughout this paper, we employ a standard Gauss-Legendre quadrature rule with $n_{GP}=3$ points. 
For a quantitative study of the influence of the quadrature rule on total energy conservation for the Euler subsystem the reader is referred to \cite{HTCGPR}. 
		
\subsection{Compatible scheme with dissipation terms}

In order to obtain a dissipative thermodynamically compatible scheme for \eqref{eqn.MHD}, we need to add a compatible numerical dissipation to the inviscid flux \eqref{eqn.p.scheme_sd1d} derived in the previous section. 
For this purpose, we augment \eqref{eqn.god_sd1d} with an additional dissipative flux and a corresponding entropy production term as follows:
\begin{equation}
\label{eqn.goddis_sd1d}
\frac{d}{dt} \q^{\ell} + \frac{\f^{\ell+\halb} - \f^{\ell-\halb}}{\Delta x} = \textcolor{blue}{\frac{\g^{\ell+\halb} - \g^{\ell-\halb}}{\Delta x} + \mathbf{P}^{\ell}}.
\end{equation}
The dissipative part of the numerical flux is taken of the form 
\begin{equation}
\g^{\ell+\halb} = \epsilon^{\ell+\halb} \frac{\Delta \q^{\ell+\halb}}{\Delta x}, \qquad \Delta \q^{\ell+\halb}= \q^{\ell+1}-\q^{\ell}, 
\end{equation}
where the scalar numerical dissipation coefficient can either be simply chosen as a constant, i.e.  $\epsilon^{\ell+\halb}=\epsilon$, or we take it of the form 
\begin{equation}
\epsilon^{\ell+\halb} = \halb \left( 1 - \phi^{\ell+\halb} \right) \Delta x \, s_{\max}^{\ell+\halb} \geq 0, 
\label{eqn.viscosity} 
\end{equation}
where $s_{\max}^{\ell+\halb}$ is the maximum signal speed at the cell interface (Rusanov flux) and  $\phi^{\ell+\halb}$ is a flux limiter that allows to reduce the numerical dissipation in smooth regions. In this paper we use the minbee flux limiter that reads 
\begin{equation}
\phi^{\ell+\halb} = \min \left( \phi^{\ell+\halb}_{-}, \phi^{\ell+\halb}_+  \right), \quad \textnormal{with} \quad 
\phi^{\ell+\halb}_{\pm} = \max\left( 0, \min \left(1, h^{\ell+\halb}_{\pm} \right) \right),
\label{eqn.fluxlimiter} 
\end{equation}
where the ratios of density slopes, similar to the SLIC scheme \cite{toro-book}, are 
\begin{equation}
h^{\ell+\halb}_{-} = \frac{\rho^{\ell} - \rho^{\ell-1}}{ \rho^{\ell+1} - \rho^{\ell} }, \qquad \textnormal{and} \qquad 
h^{\ell+\halb}_{+} = \frac{\rho^{\ell+2} - \rho^{\ell+1}}{ \rho^{\ell+1} - \rho^{\ell} }.
\end{equation}

Calculating the dot product of $\p^{\ell}$ with \eqref{eqn.goddis_sd1d} leads to 
\begin{equation}
\label{eqn.pgoddis_sd1d}
\frac{d\mathcal{E}^{\ell}}{dt} + \frac{1}{\Delta x} \left(F_G^{\ell+\halb}-F_G^{\ell-\halb}\right)
= \frac{1}{\Delta x} \p^{\ell}\cdot \left(\g^{\ell+\halb} - \g^{\ell-\halb}\right) 
+\p^{\ell}\cdot \mathbf{P}^{\ell},
\end{equation}
\textcolor{black}{
with the numerical energy flux that can be computed from the total energy fluctuation at the interface as $F_G^{\ell+\halb} = D_{\mathcal{E}}^{\ell+\halb,-}+F_G^{\ell}$.}  
The thermodynamic compatibility of the left hand side of \eqref{eqn.pgoddis_sd1d} has already been studied in the previous section, hence, in the following, we can focus on the right hand side of \eqref{eqn.pgoddis_sd1d} obtaining 
\begin{eqnarray} 
\label{eqn.E.diss.2} 
\p^{\ell} \cdot \mathbf{P}^{\ell} + \p^{\ell} \cdot \frac{\g^{\ell+\halb} - \g^{\ell-\halb}}{\Delta x} && \nonumber \\  
=\p^{\ell} \cdot \mathbf{P}^{\ell} + \frac{1}{\Delta x} \left( \halb \p^{\ell} \cdot \g^{\ell+\halb} + \halb \p^{\ell+1} \cdot \g^{\ell+\halb} + 
\halb \p^{\ell} \cdot \g^{\ell+\halb} - \halb \p^{\ell+1} \cdot \g^{\ell+\halb} \right)  && \nonumber \\ 
-\frac{1}{\Delta x} \left( \halb \p^{\ell} \cdot \g^{\ell-\halb} + \halb \p^{\ell-1} \cdot \g^{\ell-\halb} + \halb \p^{\ell} \cdot \g^{\ell-\halb} - \halb \p^{\ell-1} \cdot \g^{\ell-\halb} \right)     && \nonumber \\ 
=\p^{\ell} \cdot \mathbf{P}^{\ell}  +  \halb \frac{\p^{\ell+1} + \p^{\ell}}{\Delta x}  \cdot \epsilon^{\ell+\halb} \frac{\Delta \q^{\ell+\halb}}{\Delta x} - \halb \frac{ \p^{\ell} + \p^{\ell-1}}{\Delta x} \cdot \epsilon^{\ell-\halb} \frac{\Delta \q^{\ell-\halb}}{\Delta x}   && \nonumber \\   
- \halb \frac{\p^{\ell+1} - \p^{\ell}}{\Delta x} \cdot \epsilon^{\ell+\halb} \frac{\Delta \q^{\ell+\halb}}{\Delta x}  
- \halb \frac{\p^{\ell} - \p^{\ell-1}}{\Delta x} \cdot \epsilon^{\ell-\halb} \frac{\Delta \q^{\ell-\halb}}{\Delta x}. 
\end{eqnarray}
Thanks to the identity 
\begin{equation}
\label{eqn.pathintegraldis_sd1d}
\int \limits_{\q^{\ell}}^{\q^{\ell+1}} \p \, \cdot \, d \q  =  \int 
\limits_{\q^{\ell}}^{\q^{\ell+1}} \partial_\q \mathcal{E} \cdot d \q = \mathcal{E}^{\ell+1} - 
\mathcal{E}^{\ell} = \Delta \mathcal{E}^{\ell+\halb},
\end{equation}
we can interpret the term 
$\halb ( \p^{\ell+1} + \p^{\ell} ) \cdot \Delta \q^{\ell+\halb}$
as an \textit{approximation} of the difference of the total energy density $\Delta \mathcal{E}^{\ell+\halb}$, where the above path integral has been evaluated via the trapezoidal rule. 
As a consequence of \eqref{eqn.E.diss.2} and \eqref{eqn.pathintegraldis_sd1d}, the energy flux including convective and diffusive terms is 
\begin{equation}
F^{\ell+\halb}_d = F_G^{\ell+\halb} - \halb ( \p^{\ell+1} + \p^{\ell} ) \cdot \epsilon^{\ell+\halb} \frac{\Delta \q^{\ell+\halb}}{\Delta x} 
\approx   F_G^{\ell+\halb} -  \epsilon^{\ell+\halb} \frac{\Delta \mathcal{E}^{\ell+\halb}}{\Delta x}.
\end{equation} 
\noindent In order to obtain a quadratic form of which we can control the sign, we now need to rewrite jumps in $\p$ variables in terms of jumps in $\q$ variables. For that purpose, we need a Roe-type matrix $\partial^2_{\q\q} \tilde{\mathcal{E}}^{\ell+\halb}$ that verifies the Roe property 
\begin{equation}
\label{eqn.roeprop2}
\partial^2_{\q\q} \tilde{\mathcal{E}}^{\ell+\halb} \cdot (\q^{\ell+1}-\q^{\ell} ) = \p^{\ell+1} - \p^{\ell}.
\end{equation}
\noindent The following segment path $\tilde{\boldsymbol{\psi}}$ in terms of the $\q$ variables    
\begin{equation} 
\label{eqn.path2_sd1d} 
\tilde{\boldsymbol{\psi}}(s) =  \q^{\ell} + s \left( \q^{\ell+1} - \q^{\ell} \right), \qquad 0 \leq s \leq 1,  
\end{equation}  
allows us to obtain the Roe matrix we are looking for: 
\begin{equation}
\tilde{\boldsymbol{\mathcal{H}}}^{\ell+\halb} = \partial^2_{\q\q}\tilde{\mathcal{E}}^{\ell+\halb} = \int \limits_0^1 \partial^2_{\q\q} \mathcal{E}\left(\tilde{\boldsymbol{\psi}}(s)\right) ds =: 
\left( \partial^2_{\p\p} \tilde{L}^{\ell+\halb} \right)^{-1}.
\label{eqn.EqqLqqRoe}
\end{equation} 
\textcolor{black}{Like the entropy compatible flux \eqref{eqn.p.scheme_sd1d} the Roe matrix \eqref{eqn.EqqLqqRoe} above is computed via numerical quadrature. Throughout this paper we employ a three-point Gauss-Legendre quadrature rule for its computation.} The Roe matrix \eqref{eqn.EqqLqqRoe} satisfies \eqref{eqn.roeprop2} and allows to rewrite \eqref{eqn.pgoddis_sd1d}, after substitution of \eqref{eqn.E.diss.2}, as 
\begin{eqnarray} 
\label{eqn.E.diss.3} 
\frac{d}{dt} \mathcal{E}^{\ell} + \frac{F_d^{\ell+\halb} - F_d^{\ell-\halb}}{\Delta x} = \p^{\ell} \cdot \mathbf{P}^{\ell} &&  \\ 
- \halb \epsilon^{\ell+\halb} \frac{\q^{\ell+1} - \q^{\ell}}{\Delta x} \cdot \tilde{\boldsymbol{\mathcal{H}}}^{\ell+\halb} \frac{\q^{\ell+1}-\q^{\ell}}{\Delta x}  
- \halb \epsilon^{\ell-\halb} \frac{\q^{\ell} - \q^{\ell-1}}{\Delta x} \cdot \tilde{\boldsymbol{\mathcal{H}}}^{\ell-\halb} \frac{\q^{\ell}-\q^{\ell-1}}{\Delta x}. \nonumber
\end{eqnarray}
The only equation in \eqref{eqn.MHD} that allows for a non-negative production term on the right hand side is the entropy inequality. Therefore, we define the production term $\mathbf{P}^{\ell} = (0,\mathbf{0},\Pi^{\ell},\mathbf{0},0)^T$ as follows, in order to cancel all contributions on the right hand side of the equality sign,   
\begin{equation} 
\p^{\ell} \cdot \mathbf{P}^{\ell} = T^{\ell} \Pi^{\ell} = 
\halb \epsilon^{\ell+\halb} \frac{\Delta \q^{\ell+\halb}}{\Delta x} \cdot \tilde{\boldsymbol{\mathcal{H}}}^{\ell+\halb} \frac{\Delta \q^{\ell+\halb}}{\Delta x} +  
\halb \epsilon^{\ell-\halb} \frac{\Delta \q^{\ell-\halb}}{\Delta x} \cdot \tilde{\boldsymbol{\mathcal{H}}}^{\ell-\halb} \frac{\Delta \q^{\ell-\halb}}{\Delta x},
\label{eqn.production.condition} 
\end{equation}
which yields to the sought semi-discrete total energy conservation law 
\begin{equation} 
\label{eqn.E.diss} 
\frac{d}{dt} \mathcal{E}^{\ell} + \frac{F_d^{\ell+\halb} - F_d^{\ell-\halb}}{\Delta x} = 0.    
\end{equation}
The final thermodynamically compatible Rusanov flux, which includes convective and diffusive terms, is therefore given by 
\begin{eqnarray} 
\f^{\ell+\halb}_{\p,d} &=& 
\int \limits_{0}^{1} \f(\boldsymbol{\psi}(s)) ds  
- \frac{\epsilon^{\ell+\halb}}{\Delta x} \left( \q^{\ell+1} - \q^{\ell} \right).  
\label{eqn.p.scheme.diss} 
\end{eqnarray}

\subsection{Compatible discretization of the magnetic field} 
To derive a compatible the discretization of the terms related to the magnetic field, we gather them in two different relations mimicking what can be observed at the continuous level, i.e. 
\begin{gather}
	-v_{i}\frac{\partial B_{i}B_{1}}{\partial x} - B_i \frac{\partial v_{i}B_{1}}{\partial x} + B_i v_{i} \frac{\partial B_{1}}{\partial x}  =  -  \frac{\partial v_{i} B_i B_{1}}{\partial x}, \label{eqn.Bcompatible1.cont}\\
	\halb v_i \frac{\partial B_mB_m\delta_{i1}   }{\partial x}  + B_i \frac{\partial B_i v_1   }{\partial x} 
	= \frac{\partial  \halb B_{m} B_{m} v_1  }{\partial x} + \halb \frac{\partial v_{i} B_mB_m\delta_{i1} }{\partial x}.
	\label{eqn.Bcompatible2.cont}
\end{gather}

First, for the compatible discretization related to \eqref{eqn.Bcompatible1.cont}, we define the auxiliary notation $R_{ik}:=-B_iB_k$ for the discrete magnetic stress tensor so for compatibility with energy conservation \textcolor{black}{at the discrete level we must require 
that the sum of the total energy fluctuations is equal to the difference of the corresponding total energy fluxes, i.e.} 
\begin{eqnarray*}
	v_{i}^{\ell}\left(R_{i1}^{\ell+\halb}-R_{i1}^{\ell} \right) + v_{i}^{\ell+1}\left(R_{i1}^{\ell+1}-R_{i1}^{\ell+\halb} \right) 
	- B_i^{\ell} \left(  \left( v_{i}B_{1}\right)^{\ell+\halb} -  \left( v_{i}B_{1}\right)^{\ell} \right) \notag\\ + B_i^{\ell+1} \left(  \left( v_{i}B_{1}\right)^{\ell+1} -  \left( v_{i}B_{1}\right)^{\ell+\halb} \right)
	+\halb B_i^{\ell} v_{i}^{\ell + \halb}\left(B_1^{\ell+1} -B_1^{\ell}\right)   \notag\\
	+\halb B_i^{\ell+1} v_{i}^{\ell + \halb}\left(B_1^{\ell+1} -B_1^{\ell}\right)=   \left( v_{i}R_{i1}\right)^{\ell+1} - \left( v_{i}R_{i1}\right)^{\ell}. 
\end{eqnarray*}
Assuming
\begin{equation*}
	\left(v_iB_1 \right)^{\ell+\halb} :=  \halb \left( \left(v_i B_1 \right)^{\ell}+ \left(v_i B_1 \right)^{\ell+1} \right),\qquad v_i^{\ell+\halb} := \halb\left(v_i^{\ell} + v_i^{\ell+1}  \right)\end{equation*}
after some algebra, we get
\begin{equation*}
	-R_{i1}^{\ell+\halb} \left(v_{i}^{\ell+1}- v_{i}^{\ell}\right) 
	- \halb \left( B_i^{\ell}+ B_i^{\ell+1}\right) \halb \left( B_{1}^{\ell} + B_{1}^{\ell+1} \right) \left(v_{i}^{\ell+1}- v_{i}^{\ell}\right)   =0, 
\end{equation*}
thus the compatible discretization for the magnetic stress tensor $R_{i1}^{\ell+\halb}$ reads
\begin{equation}
	R_{i1}^{\ell+\halb} = - \halb \left( B_i^{\ell}+ B_i^{\ell+1}\right) \halb \left( B_{1}^{\ell} + B_{1}^{\ell+1} \right).
\end{equation}

Introducing $\mu := \halb B_m B_m$ for the magnetic pressure and imposing \eqref{eqn.Bcompatible2.cont} at the discrete level yields
\begin{eqnarray*}
	v_i^{\ell} \left(\mu^{\ell+\halb} - \mu^{\ell} \right) \delta_{i1} + v_i^{\ell+1} \left(\mu^{\ell+1} - \mu^{\ell+\halb} \right) \delta_{i1} 
	+ B_i^{\ell} \left( \left( B_iv_1\right)^{\ell+\halb} -  \left( B_iv_1\right)^{\ell} \right) \notag\\
	+ B_i^{\ell+1} \left( \left( B_i v_1\right)^{\ell+1} -  \left( B_i v_1\right)^{\ell+\halb} \right) 
	= 2 \left(\left( v_1 \mu\right)^{\ell+1} - \left( v_1 \mu\right)^{\ell} \right).
\end{eqnarray*}
Taking $\left(B_iv_1\right)^{\ell+\halb}:= \halb \! \left(B_i^{\ell} + B_i^{\ell+1}\right) \halb \! \left(v_1^{\ell} + v_1^{\ell+1}\right)$, adding and subtracting $\frac{1}{4} B_m^{\ell} B_m^{\ell} v_1$, $\frac{1}{4} B_m^{\ell+1} B_m^{\ell+1} v_1^{\ell}$ and simplifying terms provides the sought compatible discretization for the discrete magnetic pressure flux:
\begin{equation}
	\mu^{\ell+\halb} = \halb\left( B_mB_m \right)^{\ell+\halb}= \frac{1}{2}\left( \halb B_m^{\ell+1}B_m^{\ell+1} + \halb B_m^{\ell}B_m^{\ell} \right).
\end{equation}	

\subsection{Compatible discretization of the GLM divergence cleaning}
For the compatible discretization of the GLM divergence cleaning, we first note that multiplication of \eqref{eqn.conti} by the dual variable $\partial_{\rho}\mathcal{E}$ yields a new term in the flux not accounted for in the Godunov formalism of the Euler subsystem of Section~\ref{sec:disc.god.form},
\begin{equation*}
	 \frac{\partial \mathcal{E}_4}{\partial \rho} \frac{\partial\rho v_1}{\partial x} = 
	 E_4 \frac{\partial \rho v_1}{\partial x} = 
	 \halb\varphi^2 \frac{\partial \rho v_1}{\partial x} .
\end{equation*}
Considering the former term, multiplying $v_1\frac{\partial \varphi}{\partial x}$ from \eqref{eqn.phi} by the dual variable $\psi$ and imposing the compatibility condition with the term $\frac{\partial \mathcal{E}_4 v_1}{\partial x}$ of the energy equation \eqref{eqn.energy},  we have
\begin{eqnarray*}
	E_4^{\ell} \left(\left(\rho v_1 \right)^{\ell+\halb}- \left(\rho v_1 \right)^{\ell} \right) 
	+ E_4^{\ell+1} \left(\left(\rho v_1 \right)^{\ell+1}- \left(\rho v_1 \right)^{\ell+\halb} \right)
	+\halb \tilde{v}_1^{\ell+ \halb} \psi^{\ell} \left(\varphi^{\ell+1}-\varphi^{\ell}\right) \\
	+\halb \tilde{v}_1^{\ell+ \halb} \psi^{\ell+1} \left(\varphi^{\ell+1}-\varphi^{\ell}\right)
	= \left(\rho v_1 E_4 \right)^{\ell+1} -\left(\rho v_1 E_4 \right)^{\ell}.
\end{eqnarray*}
Hence, making use of the inviscid mass flux $\left(\rho v_1 \right)^{\ell+\halb} = f_{\rho}^{\ell+\halb}$, which is known from the discrete Godunov formalism presented in Section \ref{sec:disc.god.form}, the compatible discretization of the advection speed in the cleaning equation is 
\begin{equation}
	\tilde{v}_1^{\ell+\halb} = \frac{\left( \rho v_1\right)^{\ell+\halb}\left(E_4^{\ell+1} - E_4^{\ell}\right) }{\halb \left(\left( \rho\varphi\right)^{\ell}+\left( \rho\varphi\right)^{\ell+1}  \right) \left(\varphi^{\ell+1}-\varphi^{\ell} \right) }.
\end{equation}
\textcolor{black}{In the case the denominator tends to zero, we compute the advection speed by simply taking the arithmetic average of the velocities in order to avoid division by zero.}  
Finally, we gather all terms related to the cleaning variable and $c_h$ and impose compatibility with the energy conservation law:
\begin{eqnarray*}	
	B_1^{\ell} c_{h} \left(\varphi^{\ell+\halb}-\varphi^{\ell}\right) 
	+ B_1^{\ell+1} c_{h} \left(\varphi^{\ell+1}-\varphi^{\ell+\halb}\right) 
	+ \rho^{\ell} \varphi^{\ell} \frac{c_{h}}{\rho^{\ell+\halb,-}}\halb\left(  B_1^{\ell+1}- B_1^{\ell} \right) \\
	+ \rho^{\ell+1} \varphi^{\ell+1} \frac{c_{h}}{\rho^{\ell+\halb,+}}\halb\left(  B_1^{\ell+1}- B_1^{\ell} \right)
	= c_h \left( \left( \varphi B_1\right)^{\ell+1} - \left( \varphi B_1\right)^{\ell}\right).
\end{eqnarray*}
Taking $\rho^{\ell+\halb,\pm}:=\halb \left( \rho^{\ell} + \rho^{\ell+1} \right)$, after some algebra, we obtain the weighted average 
\begin{equation}
\varphi^{\ell+\halb} = \frac{\rho^{\ell} \varphi^{\ell} + \rho^{\ell+1}\varphi^{\ell+1}}{\rho^{\ell} + \rho^{\ell+1}},    
\end{equation}
for the numerical flux of the cleaning scalar, which completes the derivation of a compatible discretization of \eqref{eqn.MHD} in one space dimension.

\section{Thermodynamically compatible semi-discrete finite volume scheme in two space dimensions}
\label{sec.scheme.2d}
The former procedure can be followed along a generic normal direction in space, which allows to obtain a thermodynamically compatible semi-discrete finite volume scheme also in multiple space dimensions. Considering a spatial control volume $\Omega_{\ell}$ with circumcenter $x^{\ell}$ and its set of neighbours $N_{\ell}$, one of those neighbours $\Omega^{\err}$ and the shared face $\partial \Omega^{\ell\err}$ with outward unit normal vector $\mathbf{n}^{\ell\err}=\left(n_1^{\ell\err},n_2^{\ell\err}\right)^{T}$ pointing from $x^{\ell}$ to $x^{\err}$, the final semi-discrete finite volume HTC scheme in multiple space dimensions reads
\begin{subequations}\label{eqn.schemeMHD2D}
	\begin{align}
		 \frac{\partial \rho^{\ell}}{\partial t} = & \frac{1}{\left|\Omega^{\ell}\right|} \sum_{\err\in N_{\ell}} \left|\partial\Omega^{\ell\err}\right|\left( - D_{\rho}^{\ell \err,-}
		\textcolor{blue}{+  g_{\rho,\,\normal}^{\ell\err}}\right) , \label{eqn.schemeMHD2D_mass}\\
		\frac{\partial (\rho v_{i}^{\ell})}{\partial t} = & \frac{1}{\left|\Omega^{\ell}\right|} 
		\sum_{\err\in N_{\ell}} 
		\left|\partial\Omega^{\ell\err}\right|\left( - D_{\rho v_{i}}^{\ell \err,-}
		\textcolor{red}{-  \mu^{\ell\err,\,-} n_i^{\ell\err} }  \textcolor{red}{- R_{ik}^{\ell\err,\,-}n_k^{\ell\err}}
		\textcolor{blue}{+  g_{\rho v_{i},\,\normal}^{\ell\err}} \right),\label{eqn.schemeMHD2D_momentum} \\
		\frac{\partial (\rho S^{\ell})}{\partial t} = & \frac{1}{\left|\Omega^{\ell}\right|} \sum_{\err\in N_{\ell}} 	\left|\partial\Omega^{\ell\err}\right| \left( -
	 D_{\rho S}^{\ell \err,-} \textcolor{blue}{+ g_{\rho S,\,\normal}^{\ell\err}}
		\textcolor{blue}{+ \Pi^{\ell\err,-}_{\normal}}\right) , \label{eqn.schemeMHD2D_entropy}\\
		\frac{\partial B_i}{\partial t} = &\frac{1}{\left|\Omega^{\ell}\right|} 
		\sum_{\err\in N_{\ell}}
		\left|\partial\Omega^{\ell\err}\right|\left(  \textcolor{red}{- \mathcal{D}_{B_i}^{\ell \err,-}} \textcolor{red}{+ c_{h} 
			\left( \varphi^{\ell\err} - \varphi^{\ell}\right)  n_i^{\ell\err}}
		\textcolor{blue}{+ g_{B_i,\,\normal}^{\ell\err}}\right), \label{eqn.schemeMHD2D_B}\\
		\frac{\partial \varphi}{\partial t} = &\frac{1}{\left|\Omega^{\ell}\right|} 
		\sum_{\err\in N_{\ell}}
		\left|\partial\Omega^{\ell\err}\right|\left(  \textcolor{red}{- \halb \tilde{u}^{\ell\err} \left(\varphi^{\err}-\varphi^{\ell}\right) - \frac{c_h}{\rho^{\ell\err}} \halb\left(B_{k}^{\err}-B_{k}^{\ell}\right)n_k^{\ell\err}}  
		\textcolor{blue}{+ g_{\varphi,\,\normal}^{\ell\err}}\right), \label{eqn.schemeMHD2Dphi}
	\end{align}
\end{subequations}
where
\begin{gather}
	D_{\q}^{\ell \err,-} = \left(f_{\q,\,k}^{\ell\err}-f_{\q,\,k}^{\ell}\right)n_k^{\ell\err},  \qquad
	f(\q) = (\rho \v, \left( \rho \v + p \mathbf{I}\right)  \otimes \v, 	\rho S \v, \mathbf{0} , \mathbf{0} )^T,
	\label{eqn.fluctuations_2D}\\
	\mathcal{D}_{B_i}^{\ell \err,-} = \left(\left( B_i v_k\right) ^{\ell\err} - \left(  B_i v_k\right) ^{\ell} \right)n_k^{\ell\err}
	- \left(\left( v_iB_k\right) ^{\ell\err} -  \left( v_i B_k\right)^{\ell}  \right)n_k^{\ell\err} +\halb \tilde{v}_i^{\ell\err} \left( B_k^{\err}-B_k^{\ell}\right)  n_k^{\ell\err}, \\
	g_{\q,\,\normal}^{\ell\err} = 
	\epsilon^{\ell\err}\frac{\q^{\err}-\q^{\ell}}{\delta^{\ell\err}}= 
	\epsilon^{\ell\err}\frac{\Delta
		\q^{\ell\err}}{\delta^{\ell\err}}, \quad \delta^{\ell\err} = \left\| \mathbf{x}^\err - \mathbf{x}^\ell \right\| = \Delta x\, n_{1}^{\err\ell}+\Delta y \, n_{2}^{\ell\err}, 
	\label{eqn.diffusion_2D}\\
	\Pi^{\ell\err,-}_{\normal} = \frac{1}{2}\epsilon^{\ell\err} \frac{\Delta \q^{\ell\err}}{T^{\ell}} \cdot  \partial^2_{\q \q} \mathcal{E}^{\ell\err} \frac{\Delta \q^{\ell\err}}{\delta^{\ell\err}}, \qquad 
	T^{\ell} = \frac{\left( \rho^{\ell}\right)^{\gamma-1}}{\left( \gamma-1\right) c_{v}} e^{\frac{ S^{\ell}}{c_{v}}} ,\label{eqn.production_2D}\\
	R_{ik}^{\ell\err,-} = R_{ik}^{\ell\err} -R_{ik}^{\ell} ,
	\quad R_{ik}^{\ell\err} = -\halb\left(B_{ik}^{\ell}+B_{ik}^{\err} \right) \halb \left( B_{ik}^{\ell}+B_{ik}^{\err} \right)  , 
	\label{eqn.R_2D} \\
	\mu^{\ell\err,-}  = \mu^{\ell\err}-\mu^{\ell}, \quad \mu^{\ell\err}= \halb \left( \halb B_{m}^{\err} B_{m}^{\err}  +    \halb B_{m}^{\ell} B_{m}^{\ell}\right)
	,\quad \tilde{v}_i^{\ell\err}  = \halb \left(\tilde{v}_i^{\ell}+\tilde{v}_i^{\err}\right), \label{eqn.B2_2D}\\
	\left( v_iB_k\right)^{\ell\err} = \halb\left(\left(v_iB_k\right)^{\ell}+\left(v_iB_k\right)^{\err}\right),\quad
	\left( B_i v_k \right)^{\ell\err} =  \halb\left(B_i^{\ell}+B_{i}^{\err}\right) \halb \left(v_k^{\ell}+v_k^{\err}\right), \\
	\tilde{u}^{\ell\err}  = \frac{f_{\rho,\,k}^{\ell\err} \, n_k^{\ell\err} \left(E_4^{\ell+1} - E_4^{\ell}\right) }{\halb \left(\left( \rho\varphi\right)^{\ell}+\left( \rho\varphi\right)^{\ell+1}  \right) \left(\varphi^{\ell+1}-\varphi^{\ell} \right) }, \nonumber \\ 
    \varphi^{\ell\err}   = \frac{\left(  \rho^{\ell} \varphi^{\ell} + \rho^{\err} \varphi^{\err}  \right)}{\left( \rho^{\ell} + \rho^{\err}\right)},\quad \rho^{\ell\err} = \halb \left(\rho^{\ell}+\rho^{\err}\right).\label{eqn.endscheme2d}
\end{gather}
The normal vectors and the fluctuations satisfy the properties:
\begin{equation}
    n_k^{\err\ell} = -n_k^{\ell\err}, \qquad \qquad 
	\nonumber \\
	\left(f_{\q,\,k}^{\err\ell}-f_{\q,\,k}^{\err}\right)n_k^{\err\ell}
	=\left(f_{\q,\,k}^{\err}-f_{\q,\,k}^{\ell\err}\right)n_k^{\ell\err}.
\end{equation}

In the following, we prove that the above semi-discrete finite volume scheme \eqref{eqn.schemeMHD2D} is thermodynamically compatible in the sense that it satisfies a cell-entropy inequality and a total energy conservation law as a consequence of the compatible discretization of all the other equations.  
\begin{theorem}
	\label{thm.entropy} 
	Assuming a positive temperature $T^{\ell} > 0$ and an at least positive semi-definite Hessian of the energy potential,  $\boldsymbol{\mathcal{H}}^{\ell\err}=\partial^2_{\q \q} \mathcal{E}^{\ell\err} \geq 0$, then the semi-discrete finite volume scheme \eqref{eqn.schemeMHD2D} above satisfies the cell entropy inequality 
	\begin{equation}
		\frac{\partial \rho S^{\ell}}{\partial t}  + \sum_{\err\in N_{\ell}}
		\frac{\left|\partial\Omega^{\ell\err}\right|}{\left|\Omega^{\ell}\right|}  D_{\rho S}^{\ell \err,-}		
		{-  \sum_{\err\in N_{\ell}}  \frac{\left|\partial\Omega^{\ell\err}\right|}{\left|\Omega^{\ell}\right|} g_{\rho S,\,\normal}^{\ell\err}}
		\geq 0.   \label{eqn.schemeGPR2D_entropy.ineq}
	\end{equation} 
\end{theorem}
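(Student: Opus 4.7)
The proof will be essentially a direct consequence of the structure built into the scheme in the previous sections, so the main work is to carefully identify the sign of each contribution rather than to perform any substantive manipulation.

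The plan is as follows. First, I would start from the discrete entropy equation \eqref{eqn.schemeMHD2D_entropy} and simply rearrange it to place the numerical flux and fluctuation terms on the left hand side, isolating the production contributions on the right hand side. This yields
\begin{equation*}
\frac{\partial \rho S^{\ell}}{\partial t}  + \sum_{\err\in N_{\ell}} \frac{|\partial\Omega^{\ell\err}|}{|\Omega^{\ell}|}  D_{\rho S}^{\ell \err,-} -  \sum_{\err\in N_{\ell}}  \frac{|\partial\Omega^{\ell\err}|}{|\Omega^{\ell}|} g_{\rho S,\,\normal}^{\ell\err} = \sum_{\err\in N_{\ell}}  \frac{|\partial\Omega^{\ell\err}|}{|\Omega^{\ell}|} \Pi^{\ell\err,-}_{\normal},
\end{equation*}
so the claimed inequality is equivalent to showing that every face production term $\Pi^{\ell\err,-}_{\normal}$ is non-negative, since the face weights $|\partial\Omega^{\ell\err}|/|\Omega^{\ell}|$ are obviously positive.

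Second, I would substitute the explicit definition of $\Pi^{\ell\err,-}_{\normal}$ from \eqref{eqn.production_2D}, namely
\begin{equation*}
\Pi^{\ell\err,-}_{\normal} = \frac{1}{2}\,\frac{\epsilon^{\ell\err}}{T^{\ell} \, \delta^{\ell\err}} \, \Delta \q^{\ell\err} \cdot  \boldsymbol{\mathcal{H}}^{\ell\err}\, \Delta \q^{\ell\err},
\end{equation*}
and inspect the sign of each factor. The distance $\delta^{\ell\err} = \|\mathbf{x}^{\err} - \mathbf{x}^{\ell}\| > 0$ by construction of the mesh; the viscosity coefficient is chosen such that $\epsilon^{\ell\err} \geq 0$ (see \eqref{eqn.viscosity}); and the temperature is assumed positive, $T^{\ell} > 0$, so the scalar prefactor is non-negative. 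The quadratic form $\Delta \q^{\ell\err} \cdot \boldsymbol{\mathcal{H}}^{\ell\err}\, \Delta \q^{\ell\err}$ is non-negative by the assumption that the path-averaged Hessian $\boldsymbol{\mathcal{H}}^{\ell\err}=\partial^2_{\q\q}\mathcal{E}^{\ell\err}$ is positive semi-definite (which is itself inherited from convexity of $\mathcal{E}$ via the segment-path integral formula \eqref{eqn.EqqLqqRoe}, as was used already in the one-dimensional construction).

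The conclusion then follows by summing over faces: each summand is non-negative, so the total entropy production on the right hand side is non-negative, giving \eqref{eqn.schemeGPR2D_entropy.ineq}. There is really no single difficult step here; the work was done already when the production term $\Pi^{\ell\err,-}_{\normal}$ was designed in Section~\ref{sec.scheme.1d} precisely so that \eqref{eqn.production.condition} forces it to be a non-negative quadratic form in the jumps $\Delta \q^{\ell\err}$. The one point that needs a small verification, and which I would flag explicitly, is that the two-dimensional face-wise definition \eqref{eqn.production_2D} retains the same quadratic-form structure as the one-dimensional expression, i.e.\ that the factor $T^{\ell}$ is the same one that appears in $\p^{\ell}\cdot \mathbf{P}^{\ell}$; once this is checked, positive semi-definiteness of $\boldsymbol{\mathcal{H}}^{\ell\err}$ closes the argument.
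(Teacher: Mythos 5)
Your proposal is correct and follows essentially the same route as the paper's own proof: rearrange \eqref{eqn.schemeMHD2D_entropy} so that only the production terms remain on the right, then conclude from the non-negativity of the quadratic form $\Pi^{\ell\err,-}_{\normal}$ under the stated assumptions $T^{\ell}>0$, $\epsilon^{\ell\err}\geq 0$, $\delta^{\ell\err}>0$ and $\boldsymbol{\mathcal{H}}^{\ell\err}\geq 0$. The paper states this more tersely but the argument is identical.
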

\begin{proof}
	The proof is an immediate consequence of the discretization \eqref{eqn.schemeMHD2D_entropy}:  
	\begin{equation}
		\frac{\partial \rho S^{\ell}}{\partial t}  + \sum_{\err\in N_{\ell}}
		\frac{\left|\partial\Omega^{\ell\err}\right|}{\left|\Omega^{\ell}\right|}  D_{\rho S}^{\ell \err,-}
		{-  \sum_{\err\in N_{\ell}}  \frac{\left|\partial\Omega^{\ell\err}\right|}{\left|\Omega^{\ell}\right|} g_{\rho S,\,\normal}^{\ell\err}} 
		= \textcolor{blue}{\frac{1}{\left|\Omega^{\ell}\right|} \sum_{\err\in N_{\ell}} 
			\left|\partial\Omega^{\ell\err}\right| \Pi^{\ell\err,-}_{\normal}} \geq 0,  \label{eqn.schemeGPR2D_entropy2}
	\end{equation} 	
	since $\Pi^{\ell\err,-}_{\normal} = \frac{1}{2}\epsilon^{\ell\err} \frac{\Delta \q^{\ell\err}}{T^{\ell}} \boldsymbol{\mathcal{H}}^{\ell\err} \frac{\Delta \q^{\ell\err}}{\delta^{\ell\err}} \geq 0$ and $T^{\ell} > 0$.  		
\end{proof}

\begin{theorem}\label{theorem.energy.semidiscrete}
	The semi-discrete finite volume scheme \eqref{eqn.schemeMHD2D} admits the following total energy conservation law
	\begin{equation}\label{eqn.schemeMHD2D_energy}
		\frac{\partial \mathcal{E}^{\ell}}{\partial t} = 
		-\frac{1}{\left|\Omega^{\ell}\right|} \sum_{\err\in N_{\ell}} \left|\partial\Omega^{\ell\err}\right| D_{\mathcal{E}}^{\ell \err,-}
		\textcolor{blue}{+ \frac{1}{\left|\Omega^{\ell}\right|} \sum_{\err\in N_{\ell}} \left|\partial\Omega^{\ell\err}\right| g_{\mathcal{E},\,\normal}^{\ell\err}}
	\end{equation}	
	with
	\begin{equation}
		 D_{\mathcal{E}}^{\ell \err,-} +  D_{\mathcal{E}}^{ \err \ell,-} =  F^{\err} -F^{\ell},  \label{eqn.fluct_flux}
	\end{equation}
	and the discrete total energy flux in the normal direction  
	\begin{equation}
		F^{\ell} = F^{\ell}_{G} + \textcolor{red}{\left(  v_{k}^{\ell}\mathcal{E}^{\ell}_{3}	 
		 		+ v_{k}^{\ell} \mathcal{E}^{\ell}_{4} 
		 		+ v_{i}^{\ell} R_{ik}^{\ell}
		 		+ v_{k}^{\ell} \mu^{\ell}
		 		+ c_h \left( \varphi B_k \right)^{\ell} \right)n_k^{\ell\err}  }, 
	\end{equation}
	where 
	\begin{equation}
		 F^{\ell}_{G} =  v_k^{\ell} ( \mathcal{E}_1^{\ell} + \mathcal{E}_2^{\ell} + p^{\ell} )  n_k^{\ell\err} 
	\end{equation}	
	is the total energy flux related to the inviscid compressible Euler subsystem. 
	Assuming that the jumps on the boundary vanish, the scheme is nonlinearly marginally stable in the energy norm, i.e. the scheme satisfies   
	\begin{equation}
		\int_{\Omega} \frac{\partial \mathcal{E}^{\ell}}{\partial t} dV
		= \sum_{\ell} | \Omega^{\ell} | \frac{\partial \mathcal{E}^{\ell}}{\partial t} = 0.
	\end{equation}
\end{theorem}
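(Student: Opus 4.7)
The strategy is to mimic on the discrete level the continuous identity \eqref{eqn.sum}: I would compute $\p^{\ell}\cdot$(scheme \eqref{eqn.schemeMHD2D}) with the dual variables $\p^{\ell}=(r^{\ell},v_i^{\ell},T^{\ell},\beta_i^{\ell},\psi^{\ell})^T=\partial_{\q}\mathcal{E}(\q^{\ell})$, so that the left-hand side yields $\partial_t\mathcal{E}^{\ell}$ by the chain rule. The right-hand side must then be reorganized into (i) a telescoping inviscid energy fluctuation $D_{\mathcal{E}}^{\ell\err,-}$ satisfying \eqref{eqn.fluct_flux}, (ii) a conservative diffusive flux $g_{\mathcal{E},\normal}^{\ell\err}$, and (iii) non-negative quadratic remainders that are cancelled by $T^{\ell}\Pi_{\normal}^{\ell\err,-}$.

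Next, I would split the sum over neighbours into the inviscid (black and red) and dissipative (blue) contributions. For the inviscid part, each compatible flux was engineered in Section \ref{sec.scheme.1d} to satisfy one algebraic identity: the Roe-type property \eqref{eqn.fluxcondition_sd1d} for the Euler subsystem, the discrete analogues of \eqref{eqn.Bcompatible1.cont}--\eqref{eqn.Bcompatible2.cont} for the magnetic stress $R_{ik}^{\ell\err}$ and magnetic pressure $\mu^{\ell\err}$, and the analogous identities for the compatible advection speed $\tilde{u}^{\ell\err}$ and weighted average $\varphi^{\ell\err}$ of the GLM subsystem. These are applied one by one along the generic normal $\normal^{\ell\err}$ to show that $\p^{\ell}\cdot$(inviscid fluctuations)$=D_{\mathcal{E}}^{\ell\err,-}$, with $D_{\mathcal{E}}^{\ell\err,-}$ of the form derived from the energy flux $F^{\ell}$ stated in the theorem. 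The consistency \eqref{eqn.fluct_flux} follows by construction, since every building block was designed precisely so that the sum of the two fluctuations across a shared face telescopes to $F^{\err}-F^{\ell}$.

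For the dissipative (blue) terms I would repeat the manipulation performed in \eqref{eqn.E.diss.2}--\eqref{eqn.E.diss} along the face normal: rewrite $\p^{\ell}\cdot g_{\q,\normal}^{\ell\err}$ as the symmetric part $\halb(\p^{\ell}+\p^{\err})\cdot g_{\q,\normal}^{\ell\err}$ plus the antisymmetric remainder $-\halb(\p^{\err}-\p^{\ell})\cdot g_{\q,\normal}^{\ell\err}$. Via the trapezoidal path integral \eqref{eqn.pathintegraldis_sd1d}, the symmetric part produces the conservative diffusive energy flux $g_{\mathcal{E},\normal}^{\ell\err}$. The antisymmetric part, after substituting the Roe-type Hessian $\boldsymbol{\mathcal{H}}^{\ell\err}$ of \eqref{eqn.EqqLqqRoe} to convert $\Delta\p^{\ell\err}$ into $\boldsymbol{\mathcal{H}}^{\ell\err}\Delta\q^{\ell\err}$, becomes a non-negative quadratic form that is precisely matched by $T^{\ell}\Pi_{\normal}^{\ell\err,-}$ according to the definition \eqref{eqn.production_2D}. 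Combining with the inviscid step yields the local energy law \eqref{eqn.schemeMHD2D_energy}.

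Finally, for nonlinear marginal stability, I would sum \eqref{eqn.schemeMHD2D_energy} weighted by $|\Omega^{\ell}|$ over all cells. Each interior face appears twice with opposite unit normals: the inviscid fluctuations combine via \eqref{eqn.fluct_flux} to $F^{\err}-F^{\ell}$, whose sum over all ordered neighbour pairs vanishes, while the diffusive contributions satisfy $g_{\mathcal{E},\normal}^{\ell\err}+g_{\mathcal{E},\normal}^{\err\ell}=0$ because $\Delta\q^{\ell\err}$ reverses sign across the face while $\delta^{\ell\err}$ and $\epsilon^{\ell\err}$ are symmetric. Under the assumption of vanishing boundary jumps, the right-hand side collapses to zero. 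The main obstacle in this plan is the combinatorial bookkeeping in the second step: one has to check term by term that the independently constructed compatible discretizations of the magnetic stress tensor, the magnetic pressure, the induction flux, and the cleaning scalar assemble into a single coherent discrete energy flux $F^{\ell}$ without spurious cross-contributions. Once that is verified, the remaining manipulations mirror the one-dimensional derivation of Section \ref{sec.scheme.1d} essentially verbatim.
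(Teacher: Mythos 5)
Your plan reproduces the paper's proof essentially step for step: dotting the scheme with $\p^{\ell}=\partial_{\q}\mathcal{E}(\q^{\ell})$ to get $\partial_t\mathcal{E}^{\ell}$, showing via the Roe-type flux property and the engineered compatibility of $R_{ik}^{\ell\err}$, $\mu^{\ell\err}$, $\tilde{u}^{\ell\err}$ and $\varphi^{\ell\err}$ that the inviscid fluctuations telescope to $F^{\err}-F^{\ell}$, treating the dissipative terms by the symmetric/antisymmetric split with the trapezoidal path integral and the Roe Hessian so that the quadratic remainder is cancelled by $T^{\ell}\Pi^{\ell\err,-}_{\normal}$, and finally summing over cells for marginal stability. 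The only thing left implicit is the explicit term-by-term verification of the red-term bookkeeping (the paper's long display culminating in \eqref{eqn.mstabilityred_2D}), which you correctly identify as the remaining work and which indeed goes through by the constructions of Section \ref{sec.scheme.1d}.
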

\begin{proof}
	We first introduce the vector of thermodynamic dual variables 
	$$ \p^{\ell} = \partial_{\q} \mathcal{E}^{\ell}= \left( \partial_{\rho} \mathcal{E}^{\ell}, \partial_{\rho v_{i}} \mathcal{E}^{\ell}, \partial_{\rho S} \mathcal{E}^{\ell}, \partial_{B_{i}} \mathcal{E}^{\ell}, \partial_{\varphi} \mathcal{E}^{\ell} \right)^{T} := 
	\left( r^{\ell}, v_i^{\ell}, T^{\ell}, \beta_i^{\ell}, \psi^{\ell} \right)^{T}.$$ The dot product of $\p^{\ell}$ with the time derivatives of the conservative variables in \eqref{eqn.schemeMHD2D} yields,
	\begin{gather}\label{eqn.Etime_2d}
		\partial_{\rho} \mathcal{E}^{\ell} \frac{\partial \rho^{\ell}}{\partial t} 
		+ \partial_{\rho v_{i}} \mathcal{E}^{\ell} \frac{\partial \rho v_{i}^{\ell}}{\partial t}
		+ \partial_{\rho S} \mathcal{E}^{\ell} \frac{\partial \rho S^{\ell}}{\partial t} 
		+ \partial_{B_i} \mathcal{E}^{\ell} \frac{\partial B_{i}^{\ell}}{\partial t}
		+ \partial_{\phi} \mathcal{E}^{\ell} \frac{\partial \phi^{\ell}}{\partial t} 
		= \partial_{\q} \mathcal{E}^{\ell} \frac{\partial \q^{\ell}}{\partial t}
		= \frac{\partial \mathcal{E}^{\ell}}{\partial t}.
	\end{gather}
	which is the sought time derivative of the total energy. We now calculate the dot product of $\p^{\ell}$ with the inviscid terms of the scheme, which yields the following fluctuations of the total energy: 
\begin{eqnarray}\label{eqn.Efluc_2d}
	D_{\mathcal{E}}^{\ell \err,-} & =  &\partial_{\rho}\mathcal{E}^{\ell}_{1} D_{\rho}^{\ell \err,-}
	+ \partial_{\rho}\mathcal{E}^{\ell}_{2} D_{\rho}^{\ell \err,-}
	\textcolor{red}{+ \partial_{\rho}\mathcal{E}^{\ell}_{3} D_{\rho}^{\ell \err,-}}
	\textcolor{red}{+ \partial_{\rho}\mathcal{E}^{\ell}_{4} D_{\rho}^{\ell \err,-}}
	\nonumber\\ 
	&& 
	+ \partial_{\rho v_{i}}\mathcal{E}^{\ell} D_{\rho v_{i}}^{\ell \err,-} 	
	\textcolor{red}{
		+ \partial_{\rho v_{i}} \mathcal{E}^{\ell} R_{ik}^{\ell\err,-}  n_k^{\ell\err} 
		+ \partial_{\rho v_{i}} \mathcal{E}^{\ell} \mu^{\ell\err,-} n_i^{\ell\err}  
		 } 
	+ \partial_{\rho S}\mathcal{E}^{\ell} D_{\rho S}^{\ell \err,-}	
	\nonumber\\ 
	&& 
	  \textcolor{red}{+ 
	 	\partial_{B_{i}}\mathcal{E}^{\ell} \left( (B_i v_k)^{\ell\err} - (B_i v_k)^{\ell} \right) 	n_k^{\ell\err} }
 	\textcolor{red}{- 
 		\partial_{B_{i}}\mathcal{E}^{\ell} \left( (v_i B_k)^{\ell\err} - (v_i B_k)^{\ell} \right) 	n_k^{\ell\err} } 
	\nonumber\\ 
	&&
	\textcolor{red}{+ \partial_{B_{i}}\mathcal{E}^{\ell} \, v_i^{\ell\err} \frac{1}{2} \left( B_k^{\err} - B_k^{\ell} \right) n_k^{\ell\err} } 
	\textcolor{red}{+ \partial_{B_{i}}\mathcal{E}^{\ell} \, c_h \left( \varphi^{\ell\err} - \varphi^{\ell} \right) n_i^{\ell\err} }  
	\nonumber \\ && 
	\textcolor{red}{+ \partial_{\varphi}\mathcal{E}^{\ell} \frac{1}{2} \tilde{u}^{\ell\err}   \left(\varphi^{\err}-\varphi^{\ell}\right) }  
	\textcolor{red}{+ \partial_{\varphi}\mathcal{E}^{\ell} 
		\frac{1}{2} \frac{c_h}{\rho^{\ell\err}} \left(B_{k}^{\err}-B_{k}^{\ell} \right) n_k^{\ell\err} }.  
\end{eqnarray}	

Next, we prove that the sum of the obtained fluctuations can also be expressed as a difference of fluxes \eqref{eqn.fluct_flux}. We first focus on the compressible Euler subsystem using the discrete Godunov formalism (black terms):
\begin{eqnarray}
	\partial_{\rho}\mathcal{E}^{\ell}_{1} D_{\rho}^{\ell \err,-}
	+ \partial_{\rho}\mathcal{E}^{\ell}_{2} D_{\rho}^{\ell \err,-}		
	+ \partial_{\rho v_{i}}\mathcal{E}^{\ell} D_{\rho v_{i}}^{\ell \err,-}
	+ \partial_{\rho S}\mathcal{E}^{\ell} D_{\rho S}^{\ell \err,-} \nonumber \\
	+ \partial_{\rho}\mathcal{E}^{\err}_{1} D_{\rho}^{ \err\ell,-}
	+ \partial_{\rho}\mathcal{E}^{\err}_{2} D_{\rho}^{ \err\ell,-}		
	+ \partial_{\rho v_{i}}\mathcal{E}^{\err} D_{\rho v_{i}}^{ \err\ell,-} 
	+ \partial_{\rho S}\mathcal{E}^{\err} D_{\rho S}^{ \err\ell,-}
	\nonumber \\
	=\p^{\ell} \cdot \left(f_{\q,\,k}^{\ell\err}-f_{\q,\,k}^{\ell}\right)n_k^{\ell\err}
	+ \p^{\err} \cdot \left(f_{\q,\,k}^{\err\ell}-f_{\q,\,k}^{\err}\right)n_k^{\err\ell}
	\nonumber \\
	=\p^{\ell} \cdot \left(f_{\q,\,k}^{\ell\err}-f_{\q,\,k}^{\ell}\right)n_k^{\ell\err}
	+ \p^{\err} \cdot \left(f_{\q,\,k}^{\err}-f_{\q,\,k}^{\ell\err}\right)n_k^{\ell\err}
	\nonumber \\
	= -\left(\p^{\err} -\p^{\ell}\right) \cdot f_{\q,\,k}^{\ell\err}n_k^{\ell\err} + \p^{\err} \cdot 
	f_{\q,\,k}^{\err}n_k^{\ell\err} - \p^{\ell} \cdot f_{\q,\,k}^{\ell}n_k^{\ell\err}
	\nonumber \\
	= \left( \p^{\err} \cdot f_{\q,\,k}^{\err} - (v_{k}L)^{\err} \right) n_k^{\ell\err} 
	-\left( \p^{\ell} \cdot f_{\q,\,k}^{\ell} - (v_{k}L)^{\ell} \right) n_k^{\ell\err}
	= F^{\err}_{G} - F^{\ell}_{G}, \label{eqn.mstabilityblack_2D}
\end{eqnarray}
On the other hand, for the red terms, we get
\begin{eqnarray}
	\partial_{\rho}\mathcal{E}^{\ell}_{3} D_{\rho}^{\ell \err,-}+ \partial_{\rho}\mathcal{E}^{\ell}_{4} D_{\rho}^{\ell \err,-}
	+ \partial_{\rho v_{i}} \mathcal{E}^{\ell} R_{ik}^{\ell\err,-}  n_k^{\ell\err} 
	+ \partial_{\rho v_{i}} \mathcal{E}^{\ell} \mu^{\ell\err,-} n_i^{\ell\err}  
	\nonumber\\ 
	+ \partial_{B_{i}}\mathcal{E}^{\ell} \left( (B_i v_k)^{\ell\err} - (B_i v_k)^{\ell} \right) 	n_k^{\ell\err} 
	- \partial_{B_{i}}\mathcal{E}^{\ell} \left( (v_i B_k)^{\ell\err} - (v_i B_k)^{\ell} \right) 	n_k^{\ell\err}  	\nonumber\\ 
	+ \partial_{B_{i}}\mathcal{E}^{\ell} \, v_i^{\ell\err} \frac{1}{2} \left( B_k^{\err} - B_k^{\ell} \right) n_k^{\ell\err} 
	+ \partial_{B_{i}}\mathcal{E}^{\ell} \, c_h \left( \varphi^{\ell\err} - \varphi^{\ell} \right) n_i^{\ell\err} 	\nonumber \\ 
	+ \partial_{\varphi}\mathcal{E}^{\ell} \frac{1}{2} \tilde{u}^{\ell\err}   \left(\varphi^{\err}-\varphi^{\ell}\right) 
	+ \partial_{\varphi}\mathcal{E}^{\ell} \frac{1}{2} \frac{c_h}{\rho^{\ell\err}} \left(B_{k}^{\err}-B_{k}^{\ell} \right) n_k^{\ell\err} \nonumber\\
	\partial_{\rho}\mathcal{E}^{\err}_{3} D_{\rho}^{\err\ell,-}+ \partial_{\rho}\mathcal{E}^{\err}_{4} D_{\rho}^{\err\ell,-}
	+ \partial_{\rho v_{i}} \mathcal{E}^{\err} R_{ik}^{\err\ell,-}  n_k^{\err\ell} 
	+ \partial_{\rho v_{i}} \mathcal{E}^{\err} \mu^{\err\ell,-} n_i^{\err\ell}   
	\nonumber\\ 
	+ \partial_{B_{i}}\mathcal{E}^{\err} \left( (B_i v_k)^{\err\ell} - (B_i v_k)^{\err} \right) 	n_k^{\err\ell} 
	- \partial_{B_{i}}\mathcal{E}^{\err} \left( (v_i B_k)^{\err\ell} - (v_i B_k)^{\err} \right) 	n_k^{\err\ell}  	\nonumber\\ 
	+ \partial_{B_{i}}\mathcal{E}^{\err} \, v_i^{\err\ell} \frac{1}{2} \left( B_k^{\ell} - B_k^{\err} \right) n_k^{\err\ell} 
	+ \partial_{B_{i}}\mathcal{E}^{\err} \, c_h \left( \varphi^{\err\ell} - \varphi^{\err} \right) n_i^{\err\ell} 	\nonumber \\ 
	+ \partial_{\varphi}\mathcal{E}^{\err} \frac{1}{2} \tilde{u}^{\err\ell}   \left(\varphi^{\ell}-\varphi^{\err}\right) 
	+ \partial_{\varphi}\mathcal{E}^{\err} \frac{1}{2} \frac{c_h}{\rho^{\err\ell}} \left(B_{k}^{\ell}-B_{k}^{\err} \right) n_k^{\err\ell} \nonumber\\
	= E^{\ell}_{4} \left(f_{\rho,\,k}^{\ell\err}-f_{\rho,\,k}^{\ell}\right)n_k^{\ell\err}
	+ v_{i}^{\ell} \left(R_{ik}^{\ell\err}- R_{ik}^{\ell}\right) n_k^{\ell\err} 
	+ v_{i}^{\ell} \left(\mu^{\ell\err}- \mu^{\ell}\right) n_i^{\ell\err}  
	\nonumber\\ 
	+ B_{i}^{\ell} \left( (B_i v_k)^{\ell\err} - (B_i v_k)^{\ell} \right) 	n_k^{\ell\err} 
	- B_{i}^{\ell} \left( (v_i B_k)^{\ell\err} - (v_i B_k)^{\ell} \right) 	n_k^{\ell\err}  	\nonumber\\ 
	+ B_{i}^{\ell} \, v_i^{\ell\err} \frac{1}{2} \left( B_k^{\err} - B_k^{\ell} \right) n_k^{\ell\err} 
	+ B_{i}^{\ell} \, c_h \left( \varphi^{\ell\err} - \varphi^{\ell} \right) n_i^{\ell\err} 	\nonumber \\ 
	+ \psi^{\ell} \frac{1}{2} \tilde{u}^{\ell\err}   \left(\varphi^{\err}-\varphi^{\ell}\right) 
	+ \psi^{\ell} \frac{1}{2} \frac{c_h}{\rho^{\ell\err}} \left(B_{k}^{\err}-B_{k}^{\ell} \right) n_k^{\ell\err} \nonumber\\
	+E^{\err}_{4} \left(f_{\rho,\,k}^{\err}- f_{\rho,\,k}^{\ell\err}\right)n_k^{\ell\err}
	+ v_{i}^{\err} \left( R_{ik}^{\err} - R_{ik}^{\ell\err}  \right) n_k^{\ell\err} 
	+ \rho v_{i}^{\err} \left( \mu^{\err} - \mu^{\ell\err}  \right) n_i^{\ell\err}  
	\nonumber\\ 
	+  B_{i}^{\err} \left((B_i v_k)^{\err} - (B_i v_k)^{\ell\err}\right) 	n_k^{\ell\err} 
	- B_{i}^{\err} \left( (v_i B_k)^{\err}- (v_i B_k)^{\ell\err} \right) 	n_k^{\ell\err}  	\nonumber\\ 
	+ B_{i}^{\err} \, v_i^{\ell\err} \frac{1}{2} \left(B_k^{\err} - B_k^{\ell}\right) n_k^{\ell\err} 
	+ B_{i}^{\err} \, c_h \left( \varphi^{\err} - \varphi^{\ell\err}  \right) n_i^{\ell\err} 	\nonumber \\ 
	+ \psi^{\err} \frac{1}{2} \tilde{u}^{\ell\err}   \left(\varphi^{\err}-\varphi^{\ell}\right) 
	+ \psi^{\err} \frac{1}{2} \frac{c_h}{\rho^{\ell\err}} \left(B_{k}^{\err}-B_{k}^{\ell} \right) n_k^{\ell\err} \nonumber\\
	=
	\left(  v_{k}^{\err}\mathcal{E}^{\err}_{3}	 
	+ v_{k}^{\err} \mathcal{E}^{\err}_{4} 
	+ v_{i}^{\err} R_{ik}^{\err}
	+ v_{k}^{\err} \mu^{\err}
	+ c_h \left( \varphi B_k \right)^{\err} \right)n_k^{\ell\err} \nonumber \\   
	-\left(  v_{k}^{\ell}\mathcal{E}^{\ell}_{3}	 
	+ v_{k}^{\ell} \mathcal{E}^{\ell}_{4} 
	+ v_{i}^{\ell} R_{ik}^{\ell}
	+ v_{k}^{\ell} \mu^{\ell}
	+ c_h \left( \varphi B_k \right)^{\ell} \right)n_k^{\ell\err},
	\label{eqn.mstabilityred_2D}
\end{eqnarray}
after taking into account the expression of $\partial_{\q} \mathcal{E}$ in state variables, that $\partial_{\rho} \mathcal{E}_{3}=0$  and the definitions
\eqref{eqn.fluctuations_2D}-\eqref{eqn.endscheme2d}.
Combining \eqref{eqn.mstabilityblack_2D}-\eqref{eqn.mstabilityred_2D} leads to the sought result, \eqref{eqn.fluct_flux},
		$D_{\mathcal{E}}^{\ell \err,-} +  D_{\mathcal{E}}^{\err\ell,-} = F^{\err} -F^{\ell}$.

To demonstrate that the energy conservation law \eqref{eqn.schemeMHD2D_energy} is retrieved, we start performing some algebraic manipulations on the numerical diffusion terms \eqref{eqn.diffusion_2D} and we use \eqref{eqn.E.diss.2} and \eqref{eqn.roeprop2} leading to 
\begin{eqnarray} 
	\frac{1}{\left|\Omega^{\ell}\right|}\! \sum_{\err\in N_{\ell}} \! \left|\partial\Omega^{\ell\err}\right|\left( \p^{\ell} \cdot \mathbf{P}^{\ell\err,-}_{\normal} + \p^{\ell} \cdot \g^{\ell\err}_{\normal}\right)  =
	\sum_{\err\in N_{\ell}} \!\frac{\left|\partial\Omega^{\ell\err}\right|}{\left|\Omega^{\ell}\right|}\left( \p^{\ell} \cdot \mathbf{P}^{\ell\err,-}_{\normal}+ \p^{\ell} \cdot
	\epsilon^{\ell\err}\frac{\Delta \q^{\ell\err}}{\delta^{\ell\err}}\right)  
	\nonumber\\
	\!=\!\sum_{\err\in N_{\ell}} \! \frac{\left|\partial\Omega^{\ell\err}\right|}{\left|\Omega^{\ell}\right|} \! \left( \p^{\ell}\! \cdot \mathbf{P}^{\ell\err,-}_{\normal}
	\!+\! \halb \p^{\ell}\! \cdot \epsilon^{\ell\err}\frac{\Delta \q^{\ell\err}}{\delta^{\ell\err}} 
	\!+\! \halb \p^{\err}\! \cdot \epsilon^{\ell\err}\frac{\Delta \q^{\ell\err}}{\delta^{\ell\err}} 
	\!+\! \halb\p^{\ell}\! \cdot \epsilon^{\ell\err}\frac{\Delta \q^{\ell\err}}{\delta^{\ell\err}} 
	\!-\! \halb \p^{\err}\! \cdot \epsilon^{\ell\err}\frac{\Delta \q^{\ell\err}}{\delta^{\ell\err}}\right)
	\nonumber\\
	=\!\sum_{\err\in N_{\ell}} \!\frac{\left|\partial\Omega^{\ell\err}\right|}{\left|\Omega^{\ell}\right|}\left( \p^{\ell} \cdot \mathbf{P}^{\ell\err,-}_{\normal}
	+ \halb \left( \p^{\ell}+\p^{\err}\right)  \cdot \epsilon^{\ell\err}\frac{\Delta \q^{\ell\err}}{\delta^{\ell\err}} 
	- \halb\left(\p^{\err} -\p^{\ell}\right)  \cdot \epsilon^{\ell\err}\frac{\Delta \q^{\ell\err}}{\delta^{\ell\err}} \right) \nonumber \\
	=\!\sum_{\err\in N_{\ell}} \!\frac{\left|\partial\Omega^{\ell\err}\right|}{\left|\Omega^{\ell}\right|}\left( \p^{\ell} \cdot \mathbf{P}^{\ell\err,-}_{\normal}
	+ \epsilon^{\ell\err}\frac{\Delta \mathcal{E}^{\ell\err}}{\delta^{\ell\err}} 
	- \halb\epsilon^{\ell\err}\frac{\Delta \q^{\ell\err}}{\delta^{\ell\err}} 
	\partial^2_{\q\q}\mathcal{E}^{\ell\err}\Delta \q^{\ell\err} \right)\!. 
	\label{eqn.diss.2d} 		 
\end{eqnarray}
Taking into account $\mathbf{P}^{\ell\err,-}_{\normal} = \left(0,\mathbf{0},\Pi^{\ell\err,-}_{\normal},\mathbf{0},0\right)$  and \eqref{eqn.production_2D} gives
\begin{equation}\label{eqn.diff_2D}
	\frac{1}{\left|\Omega^{\ell}\right|}\! \sum_{\err\in N_{\ell}}   \!  \left|\partial\Omega^{\ell\err}\right| \left( \p^{\ell} \cdot  \g_{\normal}^{\ell\err} +  \p^{\ell} \cdot \mathbf{P}^{\ell\err,-}_{\normal} \right) 		 
	= \frac{1}{\left|\Omega^{\ell}\right|} \!\sum_{\err\in N_{\ell}}\! \left|\partial\Omega^{\ell\err}\right|  \epsilon^{\ell\err}\frac{\Delta \mathcal{E}^{\ell\err}}{\delta^{\ell\err}} 
	= \frac{1}{\left|\Omega^{\ell}\right|}\! \sum_{\err\in N_{\ell}}\! \left|\partial\Omega^{\ell\err}\right| g_{\mathcal{E},\,\normal}^{\ell\err} 
\end{equation}
and thus 
\begin{eqnarray}
	 \p^{\ell} \cdot \frac{1}{\left|\Omega^{\ell}\right|}\! \sum_{\err\in N_{\ell}} \!    \left|\partial\Omega^{\ell\err}\right| \g_{\normal}^{\ell\err}
	+\partial_{\rho S} \mathcal{E}^{\ell} \frac{1}{\left|\Omega^{\ell}\right|}\! \sum_{\err\in N_{\ell}}\! 
	\left|\partial\Omega^{\ell\err}\right| \Pi^{\ell\err}_{\normal}
	\\ 	 
	= \!\sum_{\err\in N_{\ell}} \! \frac{\left|\partial\Omega^{\ell\err}\right|}{\left|\Omega^{\ell}\right|}  \left( 
	\p^{\ell} \cdot \epsilon^{\ell\err}\frac{\Delta \q^{\ell\err}}{\delta^{\ell\err}}
	+ \partial_{\rho S} \mathcal{E}^{\ell}  \frac{1}{4}\epsilon^{\ell\err} \frac{\Delta \q^{\ell\err}}{T^{\ell}} \partial^2_{\q\q} \mathcal{E}^{\ell\err} \frac{\Delta \q^{\ell\err}}{\delta^{\ell\err}} \right)\!= \!\frac{1}{\left|\Omega^{\ell}\right|}\! \sum_{\err\in N_{\ell}}\! \left|\partial\Omega^{\ell\err}\right| g_{\mathcal{E},\,\normal}^{\ell\err}.
	\nonumber 
	\label{eqn.Ediff_2D}
\end{eqnarray}
From \eqref{eqn.Etime_2d}, \eqref{eqn.Efluc_2d}, \eqref{eqn.Ediff_2D}, we conclude that the thermodynamically compatible FV scheme satisfies the additional semi-discrete total energy conservation law that takes the sought form of \eqref{eqn.schemeMHD2D_energy}, i.e.   
\begin{equation*}
	\frac{\partial \mathcal{E}^{\ell}}{\partial t} = 
	-\frac{1}{\left|\Omega^{\ell}\right|} \sum_{\err\in N_{\ell}} \left|\partial\Omega^{\ell\err}\right| D_{\mathcal{E}}^{\ell \err,-}
	\textcolor{blue}{+ \frac{1}{\left|\Omega^{\ell}\right|} \sum_{\err\in N_{\ell}} \left|\partial\Omega^{\ell\err}\right| g_{\mathcal{E},\,\normal}^{\ell\err}}.
\end{equation*}

To complete the proof of the theorem, we consider marginal nonlinear stability in the energy norm. Integrating \eqref{eqn.schemeMHD2D_energy} over the computational domain $\Omega$ gives
\begin{equation*}
	\int_{\Omega} \frac{\partial \mathcal{E}^{\ell}}{\partial t} dV
	= \sum_{\ell} \left| \Omega^{\ell}\right| \frac{\partial \mathcal{E}^{\ell}}{\partial t}
	= -\sum_{\ell} \sum_{\err\in N_{\ell}} \left|\partial\Omega^{\ell\err}\right| D_{\mathcal{E}}^{\ell \err,-}
	\textcolor{blue}{+ \sum_{\ell} \sum_{\err\in N_{\ell}} \left|\partial\Omega^{\ell\err}\right| g_{\mathcal{E},\,\normal}^{\ell\err}}.
\end{equation*}
Assuming that the solution on $\partial \Omega$ tends to a constant value, we observe that the jumps on the state variables, $\q$, vanish at the boundary, that dissipative terms and fluctuations become zero and that we can recast the remaining dissipative terms into a telescopic sum which cancels.
Therefore, we can reorganize the summation of the first term in the right hand side of the previous equation gathering the contributions at each face so that
\begin{equation*}
	\int_{\Omega} \frac{\partial \mathcal{E}^{\ell}}{\partial t} dV\!
	=\! \sum_{\ell} \left| \Omega^{\ell}\right| \frac{\partial \mathcal{E}^{\ell}}{\partial t}\! = -\!\sum_{\ell\err} \left|\partial\Omega^{\ell\err}\right|  \left(D_{\mathcal{E}}^{\ell \err,-}
	+  D_{\mathcal{E}}^{ \err \ell,-}\right)\! = \! -\!\sum_{\ell\err}  \left|\partial\Omega^{\ell\err}\right|\left(  F^{\err}-F^{\ell}\right)\! = \! 0
\end{equation*}
which proves the marginal nonlinear stability. 
\end{proof}

\textcolor{black}{In the absence of numerical dissipation, the discretization of the Euler subsystem (black terms) and of the terms related to the magnetic field and to the cleaning scalar (red terms) is a central one. Therefore, on uniform Cartesian meshes we expect the scheme presented in this paper to be second order accurate in space, which is later also confirmed by numerical experiments.} 


\section{Numerical results}
\label{sec.results}

\textcolor{black}{In order to keep time discretization errors as small as possible},  throughout this section we use the classical fourth order Runge-Kutta scheme to discretize the nonlinear ODE system that results from the semi-discrete HTC scheme. 
\textcolor{black}{The time step size is chosen according to the following standard CFL-type condition
\begin{equation}
	\Delta t = \frac{\textnormal{CFL}}{\frac{|\lambda_{\max}^x|}{\Delta x} + \frac{|\lambda_{\max}^y|}{\Delta y}}
\end{equation}
with $|\lambda_{\max}^x|$ and $|\lambda_{\max}^y|$ the maximum absolute values of the eigenvalues in the $x$ and $y$ direction, respectively, and the Courant number CFL = 0.5. 
} 
If not stated otherwise, the numerical viscosity $\epsilon^{\ell+\halb}$ is chosen according to  \eqref{eqn.viscosity}. Wherever values of $\epsilon$ are explicitly provided, the numerical dissipation is set to a constant, $\epsilon^{\ell+\halb}=\epsilon$. The cleaning scalar is initially set to $\varphi=0$ and if not stated otherwise the cleaning speed is set to $c_h=2$.  
\subsection{Numerical convergence study} 
\label{sec.numconv} 
To verify the order of accuracy of the new HTC FV scheme for MHD, we provide a numerical convergence study using a smooth MHD vortex problem, similar to the one proposed in \cite{Balsara2004}. Here, we follow the setup given in \cite{SIMHD}, but adapt it to the unit system used in this paper. The initial condition, which is also the exact solution of the problem for all later times, is given in terms of primitive variables by $\rho = 1$, $v_1 = e^{\frac{1}{2}(1-r^2)}(5-y)$, 
$v_2 = e^{\frac{1}{2}(1-r^2)}(x-5)$, $v_3 = 0$, $p = \halb e - \halb r^2 e^{-(r^2-1)}$, $ B_1 = e^{\frac{1}{2}(1-r^2)}(5-y)$, $B_2 = e^{\frac{1}{2}(1-r^2)}(x-5)$ and $B_3 = 0$  
with $r^2=(x-5)^2+(y-5)^2$. 
 The parameters of the model are chosen as $\gamma=\frac{5}{3}$ and $\epsilon=0$. The computational domain $\Omega=[0,10]^2$ with periodic boundary conditions is discretized with a sequence of successively refined uniform Cartesian grids composed of $N_x \times N_x$ elements. The numerical convergence rates obtained 
at time $t=0.25$ are reported in Table \ref{tab.conv.semi}, showing that second order of accuracy is achieved by our scheme.  
\begin{table}[!t] 
	\renewcommand{\arraystretch}{1.1}
	\caption{ $L^2$ error norms for the smooth MHD vortex problem obtained with the semi-discrete HTC finite volume scheme at time $t=0.25$.  } 
	\label{tab.conv.semi} 
	\begin{center} 
		\begin{tabular}{ccccccccc}
			\hline 
			$N_{x}$  &  $\|\rho\|^2 $	& $\| \rho v_1 \|^2$ & $ \| \rho S \|^2 $ & $ \| B_1 \|^2 $  & $\mathcal{O}(\rho)$ & $\mathcal{O}(\rho v_1)$ & $\mathcal{O}(\rho S)$ & $\mathcal{O}(B_1)$\\ 
			\hline
			32  & 1.03E-2 & 1.13E-2 & 9.35E-3 & 7.61E-3 &  & &  &  \\
			64  & 2.72E-3 & 2.91E-3 & 2.36E-3 & 2.06E-3 & 1.9 & 2.0 & 2.0 & 1.9 \\
			128 & 6.91E-4 & 7.32E-4 & 5.90E-4 & 5.25E-4 & 2.0 & 2.0 & 2.0 & 2.0 \\
			256 & 1.73E-4 & 1.83E-4 & 1.47E-4 & 1.32E-4 & 2.0 & 2.0 & 2.0 & 2.0 \\
			512 & 4.34E-5 & 4.58E-5 & 3.68E-5 & 3.30E-5 & 2.0 & 2.0 & 2.0 & 2.0 \\
			\hline
		\end{tabular} 
	\end{center}
\end{table}

\textcolor{black}{We now run this test again on a mesh of $64 \times 64$ elements, but until a much larger final time of $t=50$, once with divergence cleaning ($c_h=2$) and once without divergence cleaning ($c_h=0$). We measure the $L^{\infty}$ norm of the divergence error of the magnetic field, as well as the integral of the entropy density over the domain. In Figure~\ref{fig.divBrhoS} we report the time series of the divergence error and of the entropy integral. We note that in both simulations the entropy is constant in time, while the divergence errors are more than two orders of magnitude smaller with the divergence cleaning, as expected. }   
\begin{figure}[!htbp]
	\begin{center}
		\begin{tabular}{cc} 
			\includegraphics[width=0.45\textwidth]{./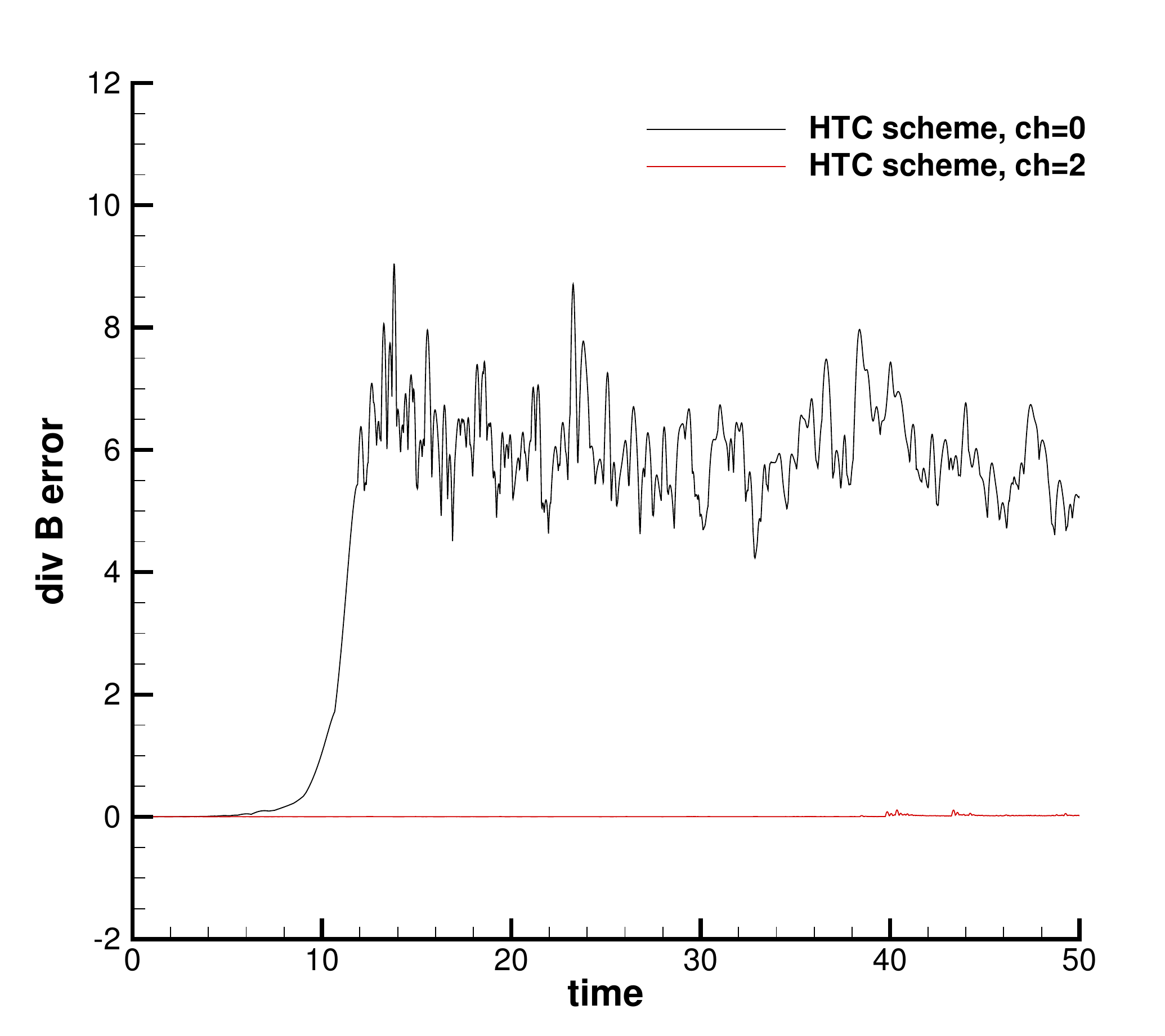} & 
			\includegraphics[width=0.45\textwidth]{./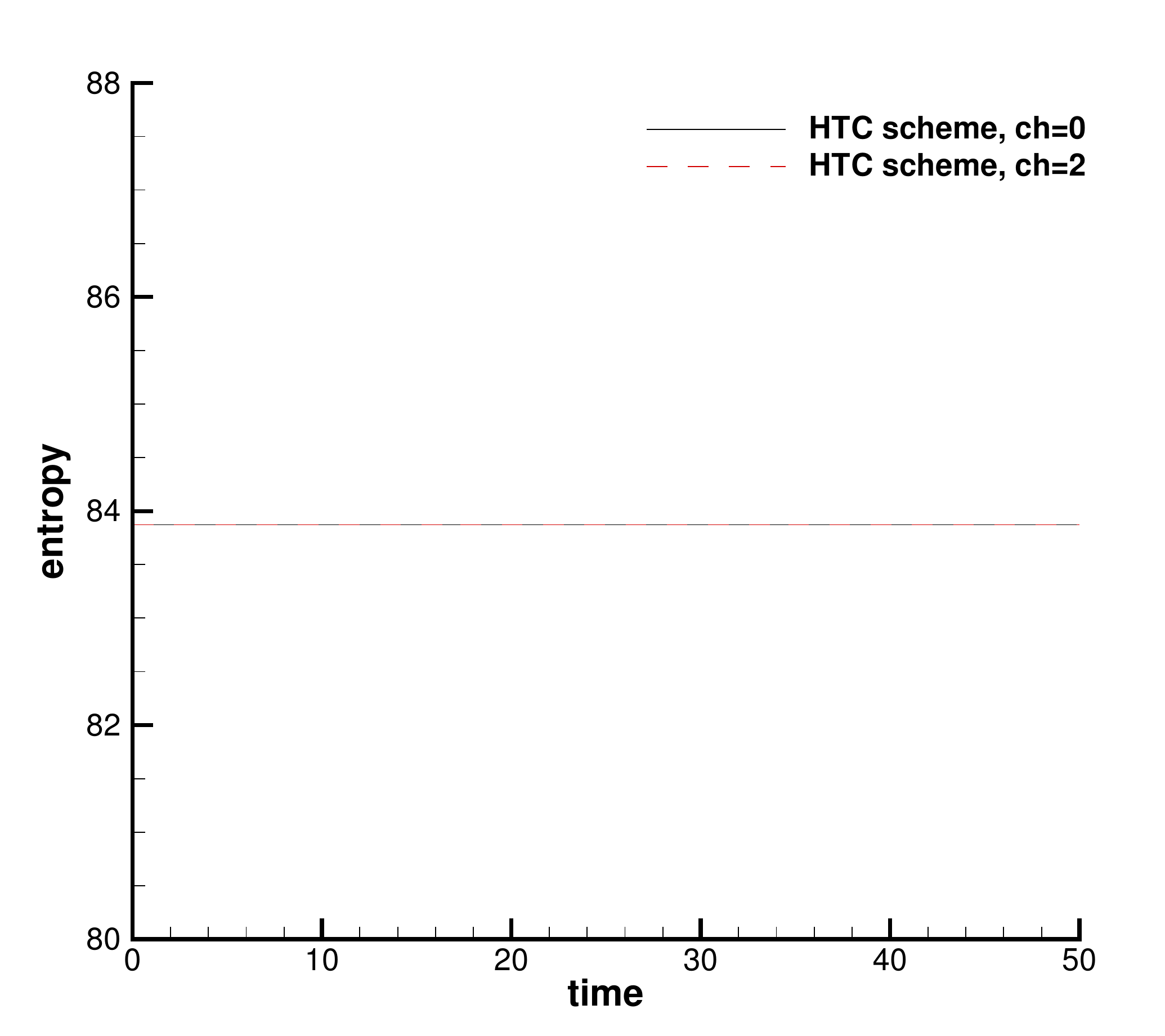}   
		\end{tabular} 
		\caption{\textcolor{black}{Time series of the $L^{\infty}$ norm of the divergence error (left) and of the integral of the entropy density over the domain (right). One simulation is carried out with divergence cleaning ($c_h=2$), while the other simulation does not employ any divergence cleaning ($c_h=0$).}  } 
		\label{fig.divBrhoS}
	\end{center}
\end{figure}
\subsection{Riemann problems} 
\label{sec.rp} 
In this section, we solve four Riemann problems of the ideal MHD equations using the new HTC finite volume scheme proposed in this paper. The setup follows the one given in \cite{SIMHD}. The exact Riemann solver has kindly been provided by S.A.E.G. Falle \cite{fallemhd,falle2}. The computational domain $\Omega=[-0.5,0.5]$ has been discretized with 1000 uniform control volumes. The initial condition consists in constant left, $L$, and right, $R$, states, separated by a discontinuity in $x_d$, with $x_d=0$ for RP1 and RP4, while $x_d=-0.1$ for RP2 and RP3. The initial values of density, velocity, pressure, and magnetic field are reported in Table \ref{tab.ic.mhd}. In all cases we set $\gamma=\frac{5}{3}$.   
The comparison between the numerical solution obtained with the new HTC FV scheme and the exact solution is presented in Figure \ref{fig.rp123}. A good agreement can be observed, similar to the results shown in \cite{SIMHD} and \cite{OsherUniversal}. 
\begin{table}[!htbp]
	\caption{Initial data for density $\rho$, velocity $\mathbf{v} = (u,v,w)$, pressure $p$  
		and magnetic field $\mathbf{B} = (B_x,B_y,B_z)$ for the Riemann problems of the ideal MHD equations. } 
	\begin{center} 
		\begin{tabular}{rcccccccc}
			\hline
			Case & $\rho$ & $u$ & $v$ & $w$ & $p$ & $B_x$ & $B_y$ & $B_z$        \\ 
			\hline   
			RP1 L: &  1.0    &  0.0     & 0.0    & 0.0      &  1.0     & $\frac{3}{4}$ &  $\phantom{-}1.0$  & 0.0       \\
			R: &  0.125  &  0.0     & 0.0    & 0.0      &  0.1     & $\frac{3}{4} $ & $-1.0$  & 0.0       \\
			RP2 L: &  1.08   &  1.2     & 0.01   & 0.5      &  0.95    & $\frac{2.0}{\sqrt{4 \pi}}$ &  $\frac{3.6}{\sqrt{4 \pi}}$     & $\frac{2.0}{\sqrt{4 \pi}}$            \\
			R: &  0.9891 &  -0.0131 & 0.0269 & 0.010037 &  0.97159 & $\frac{2.0}{\sqrt{4 \pi}}$ &  $\frac{4.0244}{\sqrt{4 \pi}}$  & $\frac{2.0026}{\sqrt{4 \pi}}$         \\
			RP3 L: &  1.7    &  0.0     & 0.0    & 0.0      &  1.7     & 1.1 &  1.0  & 0.0              \\
			R: &  0.2    &  0.0     & 0.0    & -1.49689  &  0.2   & 1.1 &  $\frac{2.7859}{\sqrt{4 \pi}}$  & $\frac{2.1921}{\sqrt{4 \pi}}$         \\
			RP4 L: &  1.0    &  0.0     & 0.0    & 0.0      &  1.0     & $1.3 $ &  $\phantom{-}1.0$   & 0.0            \\
			R: &  0.4    &  0.0     & 0.0    & 0.0      &  0.4     & $1.3 $ &  $-1.0$  & 0.0             \\
			\hline
		\end{tabular}
	\end{center} 
	\label{tab.ic.mhd}
\end{table} 
\begin{figure}[!htbp]
	\begin{center}
		\begin{tabular}{cc} 
			\includegraphics[width=0.4\textwidth]{./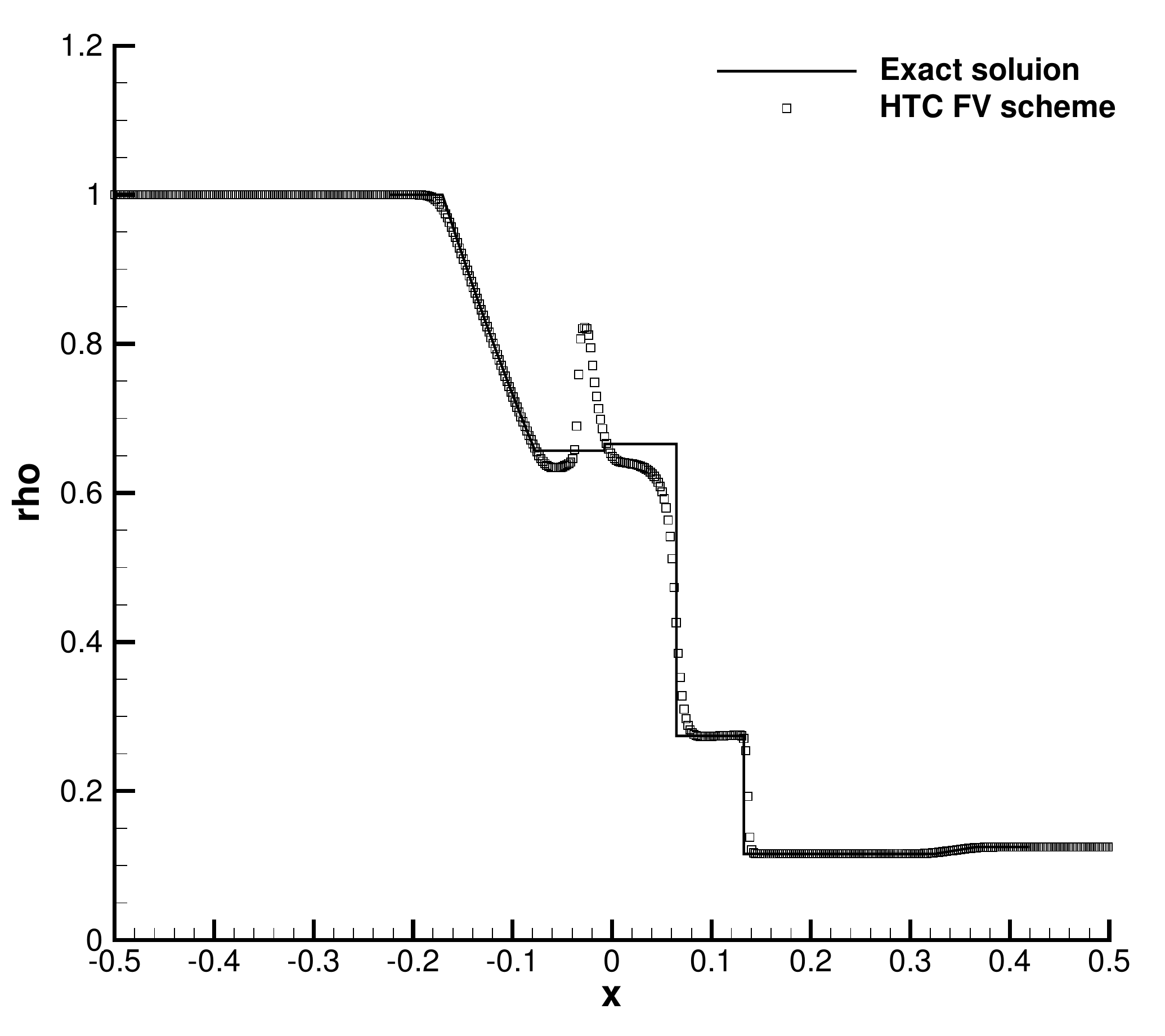} & 
			\includegraphics[width=0.4\textwidth]{./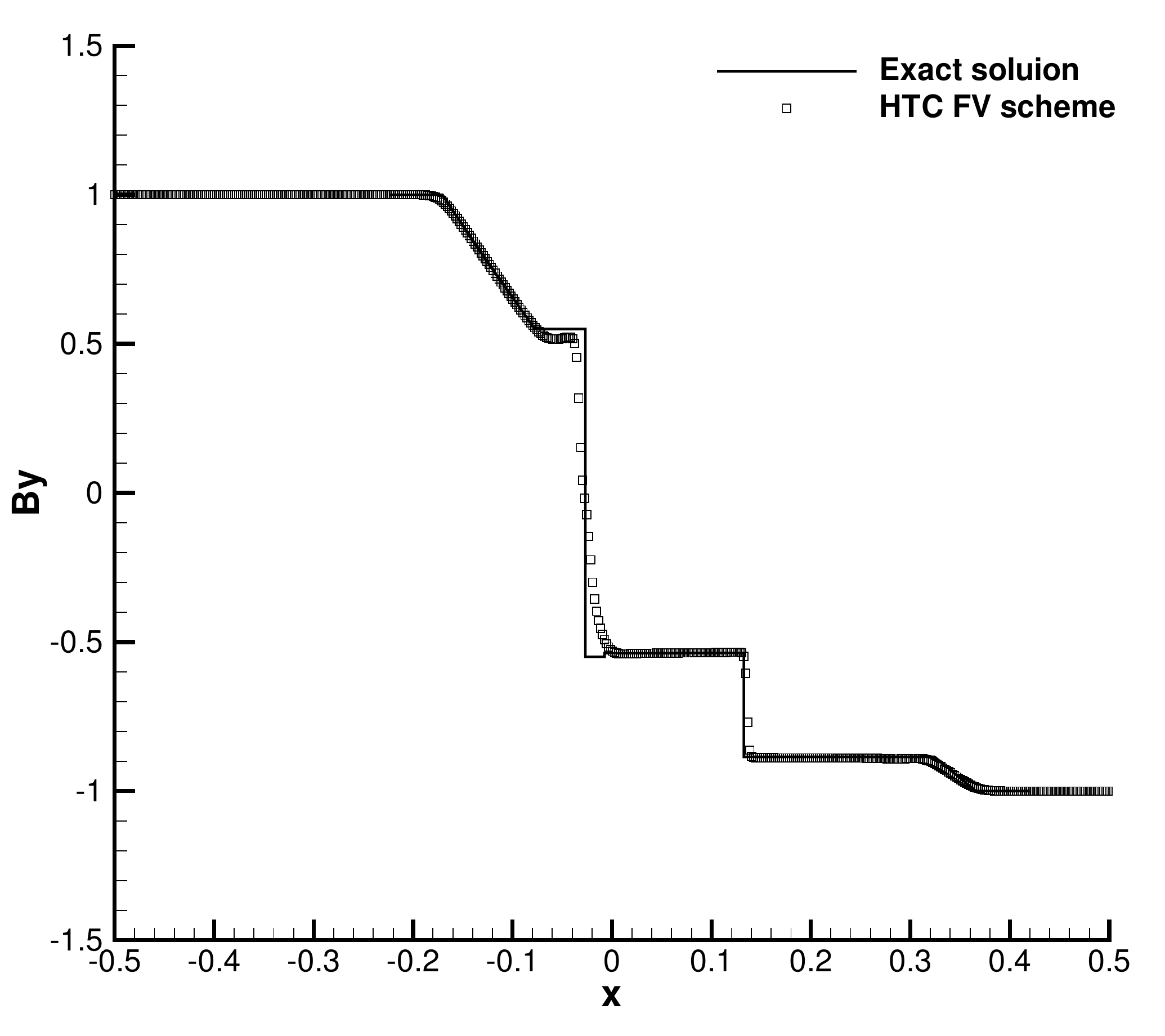}  \\  
			\includegraphics[width=0.4\textwidth]{./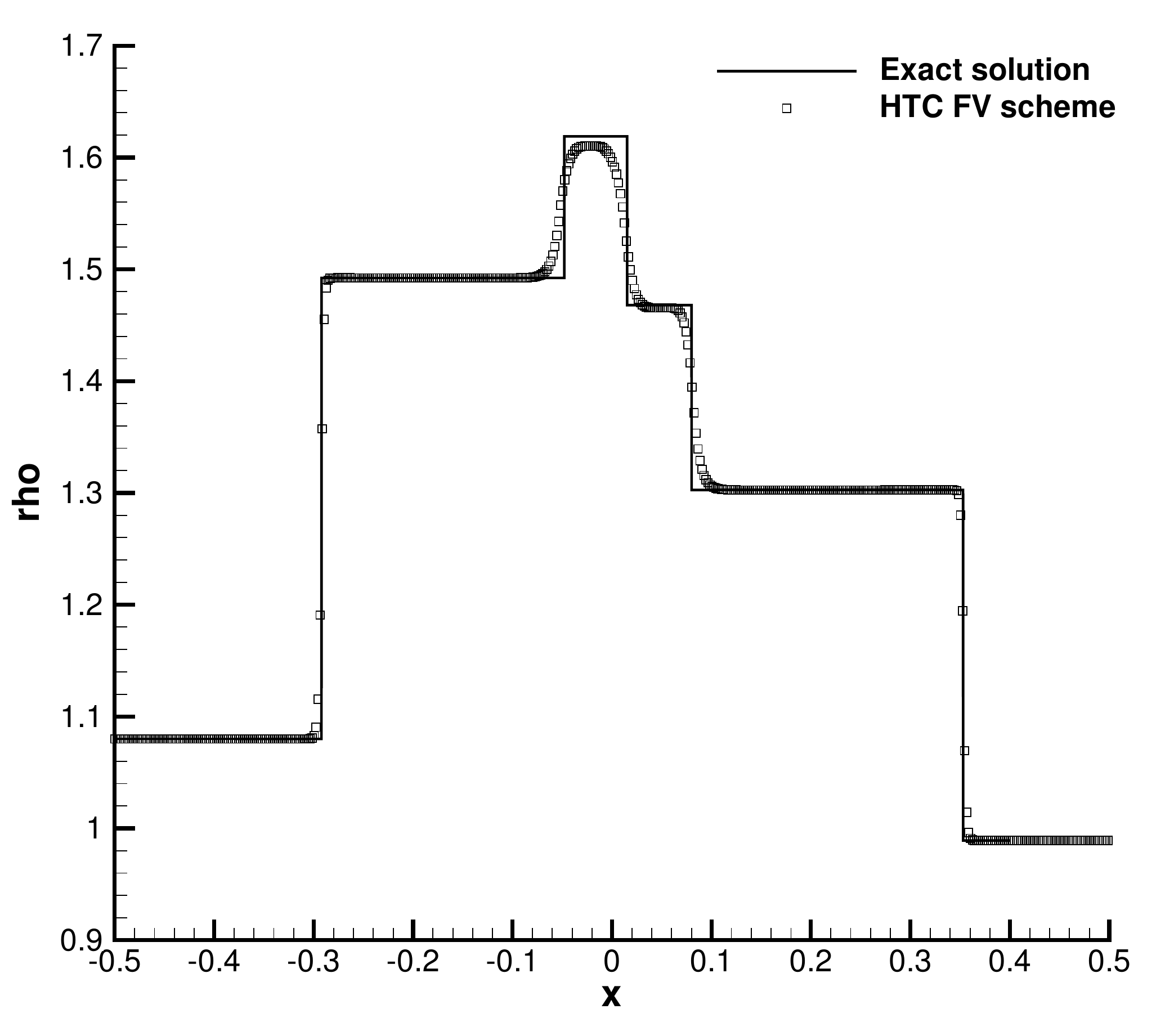} & 
			\includegraphics[width=0.4\textwidth]{./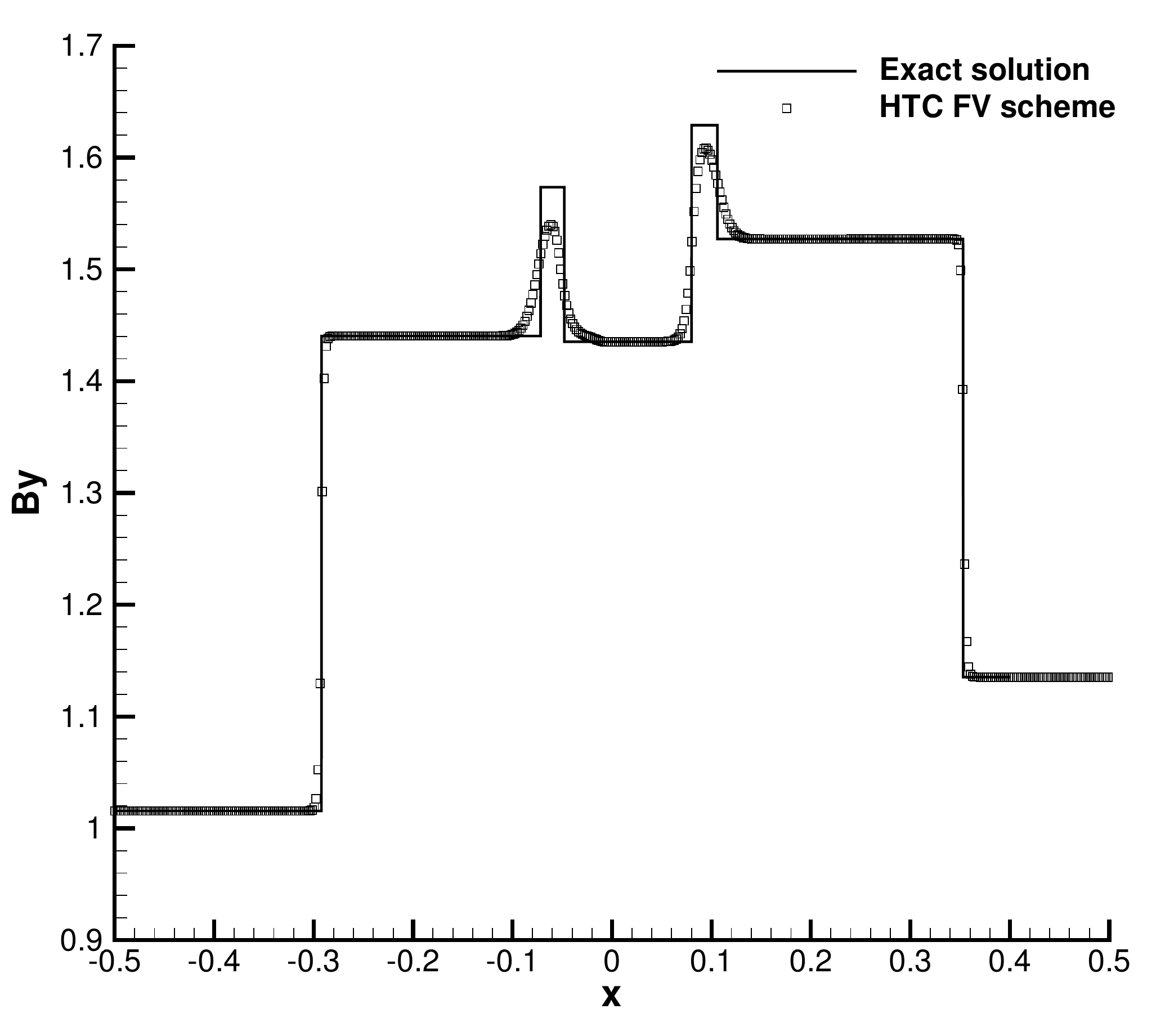}  \\   
			\includegraphics[width=0.4\textwidth]{./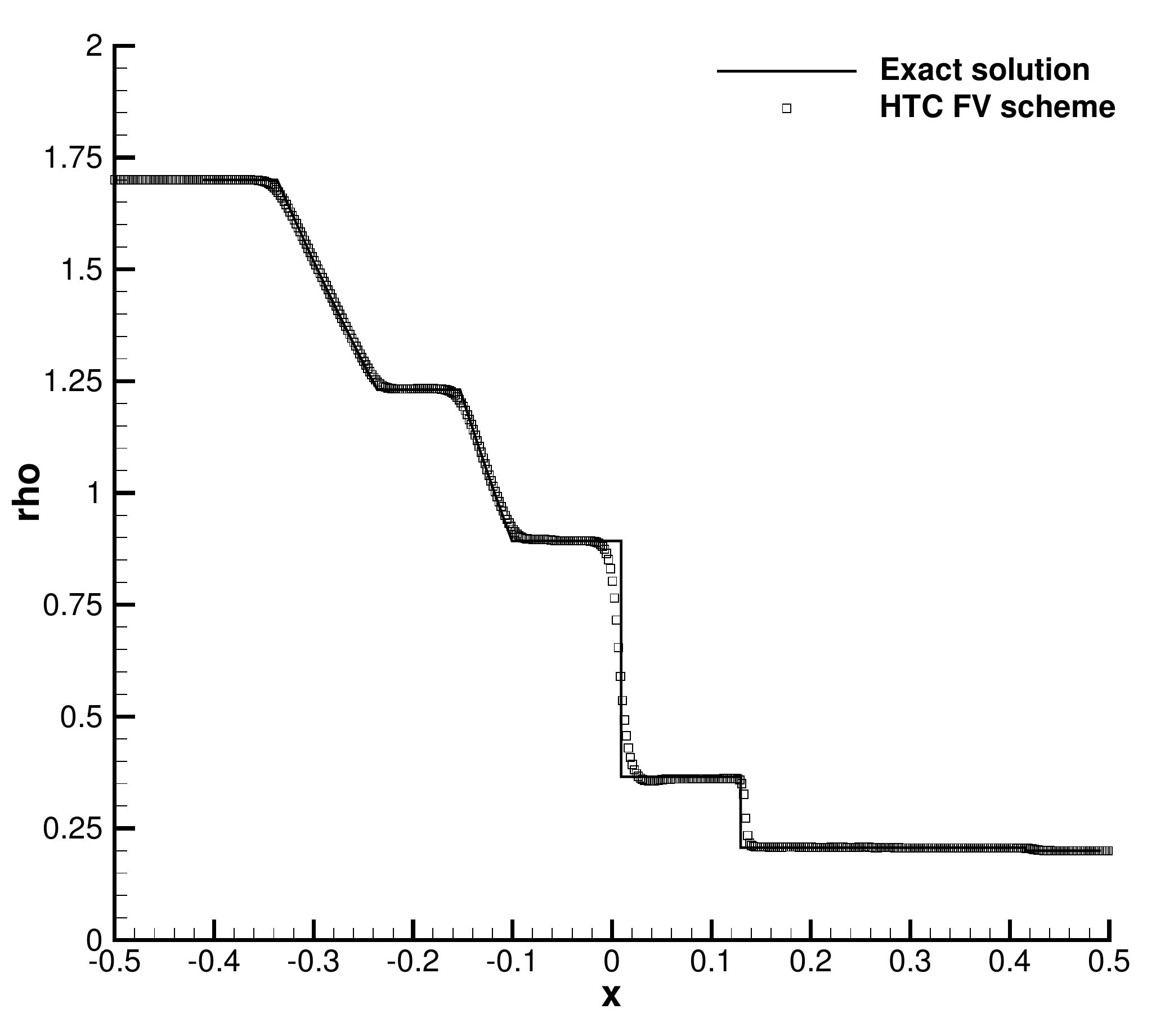} & 
			\includegraphics[width=0.4\textwidth]{./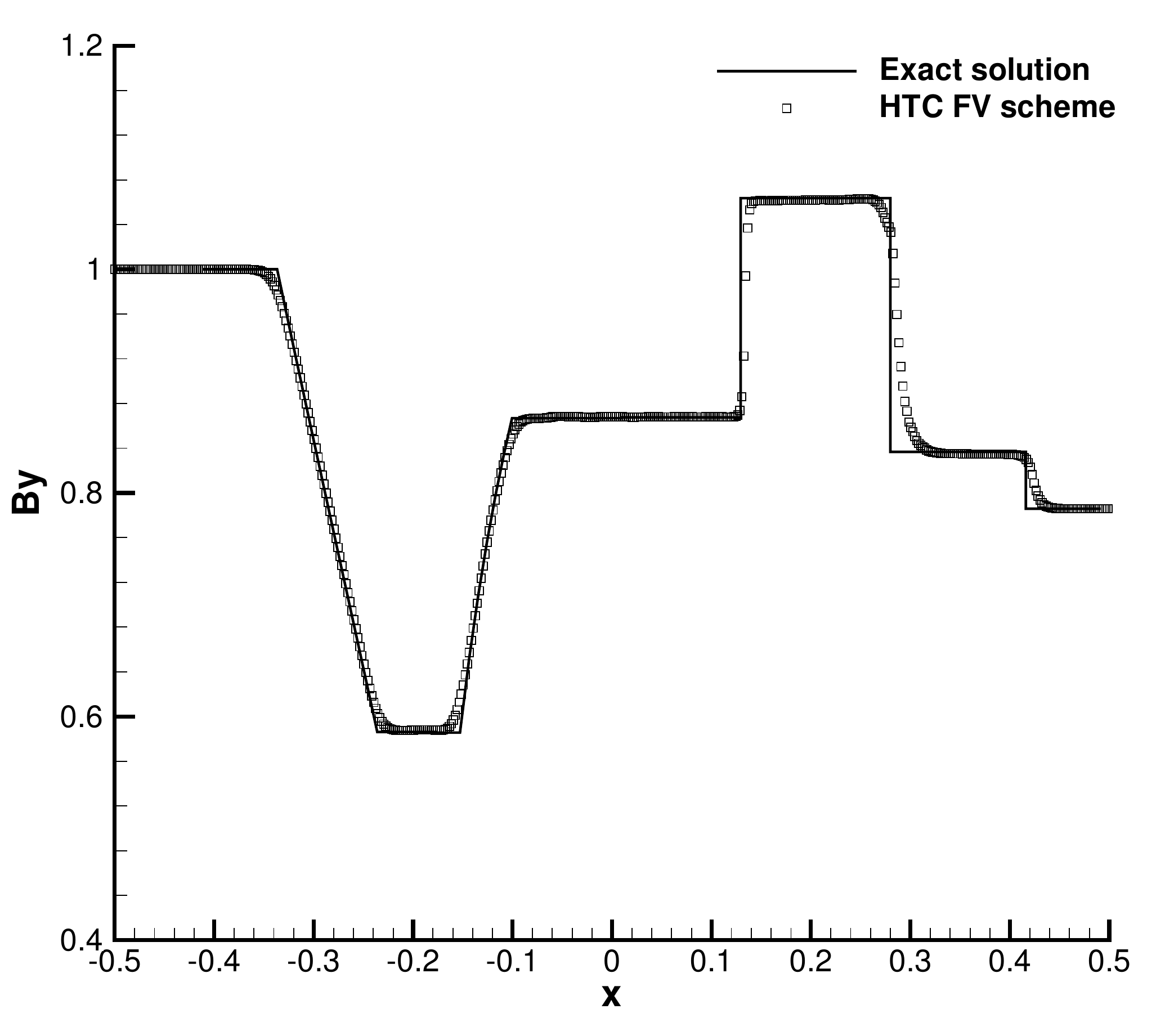}  \\    
			\includegraphics[width=0.4\textwidth]{./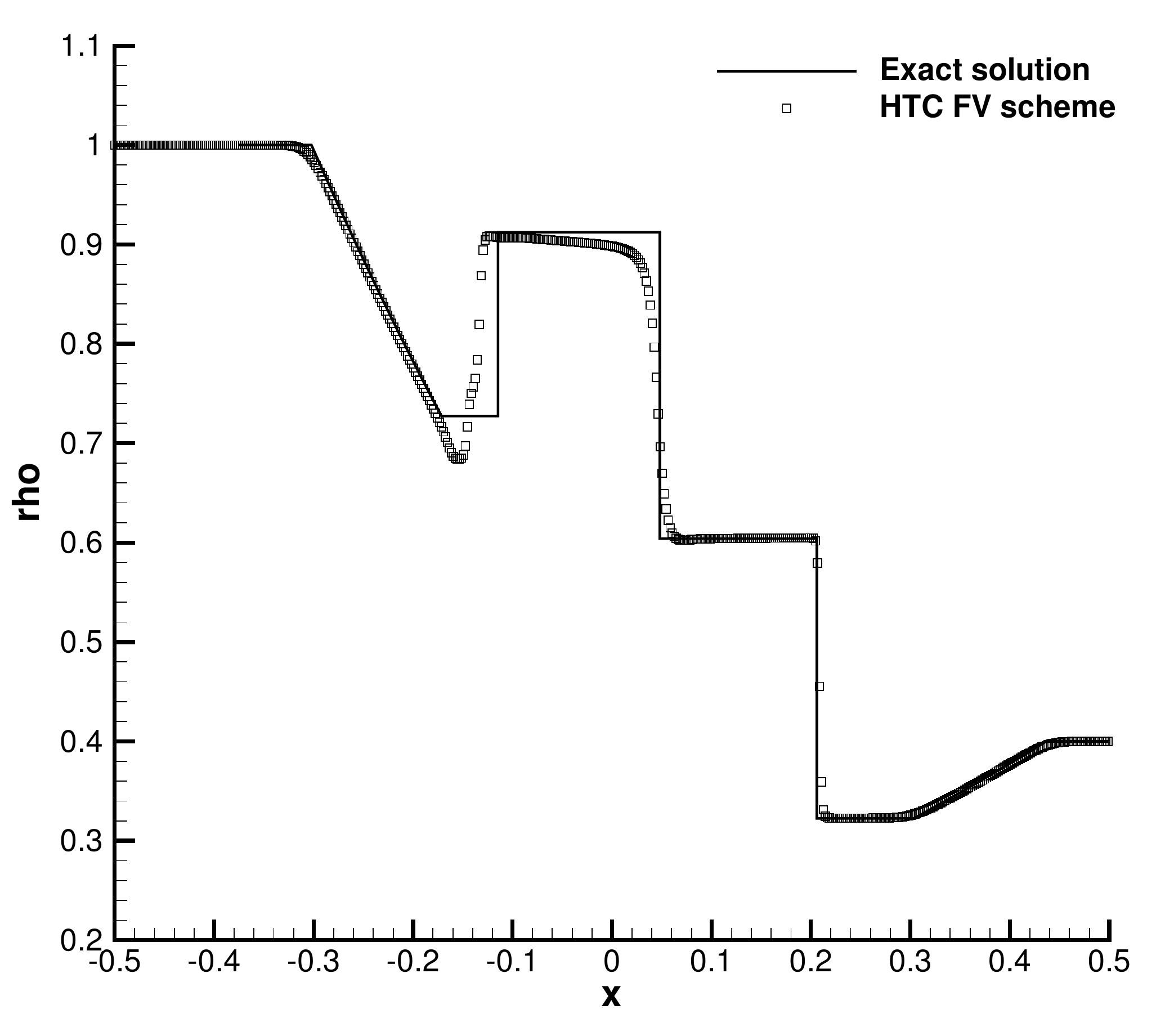} & 
			\includegraphics[width=0.4\textwidth]{./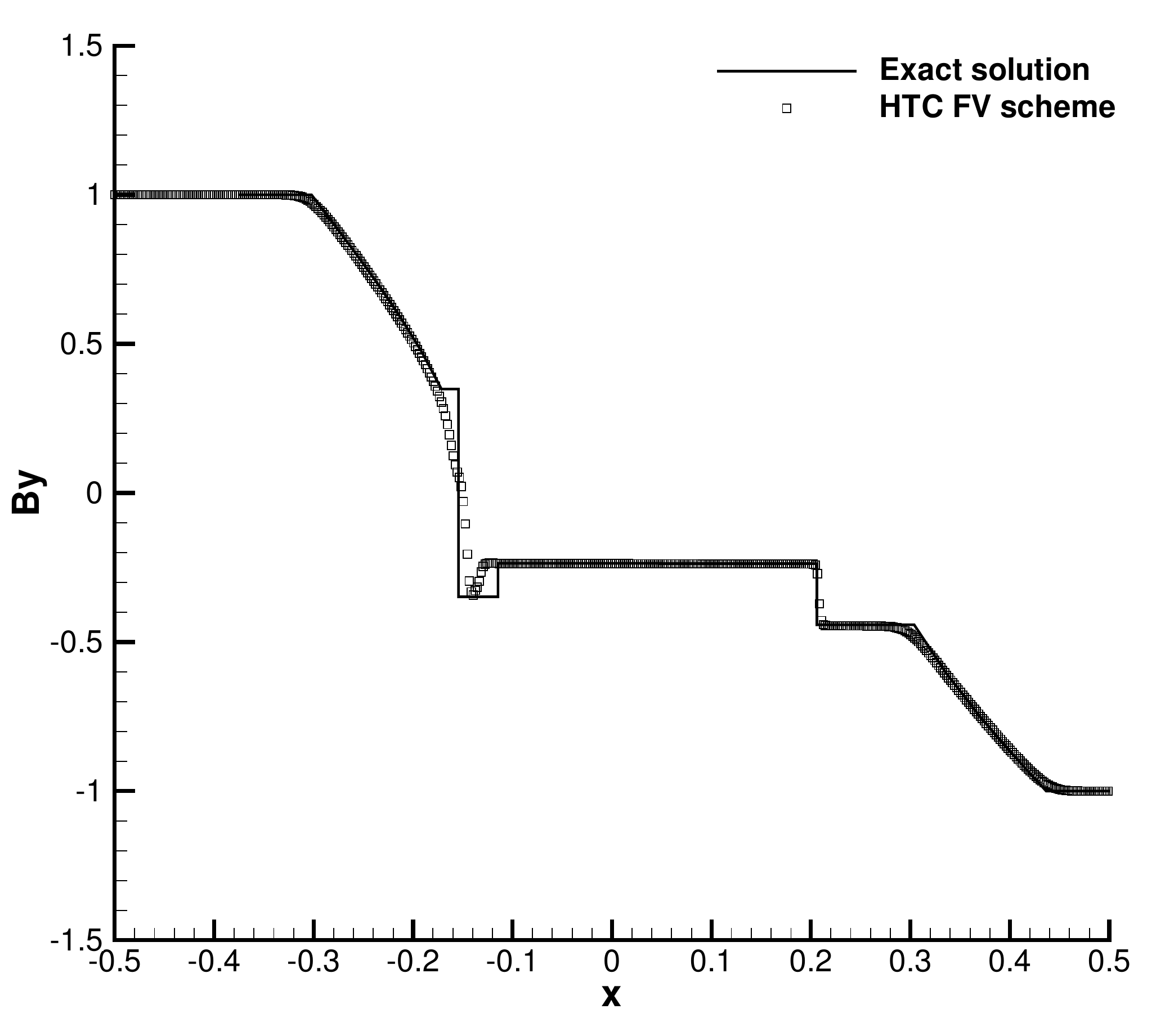}     
		\end{tabular} 
		\caption{Exact solution and numerical results obtained with the new HTC FV scheme for MHD Riemann problems RP1-RP4 (from top to bottom). Density (left) and magnetic field component $B_y$ (right) at the final times $t=0.1$, $t=0.2$, $t=0.15$ and $t=0.16$, respectively. } 
		\label{fig.rp123}
	\end{center}
\end{figure}
\subsection{Orszag-Tang vortex system} 
\label{sec.ot} 
We now study the well-known Orszag-Tang vortex system, using the computational setup provided in \cite{JiangWu,SIMHD}.  
The computational domain is $\Omega = [0,2\pi]^2$ with periodic boundary conditions everywhere. The initial conditions for the physical variables are 
$\rho = \gamma^2$, $\mathbf{v} = (-\sin(y),\sin(x),0)$, $p=\gamma$ and $\mathbf{B}= (-\sin(y),\sin(2x),0)$ with $\gamma = 5/3$ and a constant numerical viscosity of $\epsilon = 2 \cdot 10^{-3}$. 
The domain is discretized via a uniform Cartesian mesh with $1000 \times 1000$ cells. The results obtained with the HTC FV scheme are presented in Figure \ref{fig.otv} at times $t=0.5$, $t=2.0$, $t=3.0$ and $t=5.0$. They agree qualitatively well with those presented elsewhere in the literature, see e.g. 
\cite{balsarahlle2d,ADERdivB,SIMHD}. 

\begin{figure}[!htbp]
	\begin{center}
		\begin{tabular}{cc} 
			\includegraphics[trim=10 10 10 10,clip,width=0.45\textwidth]{./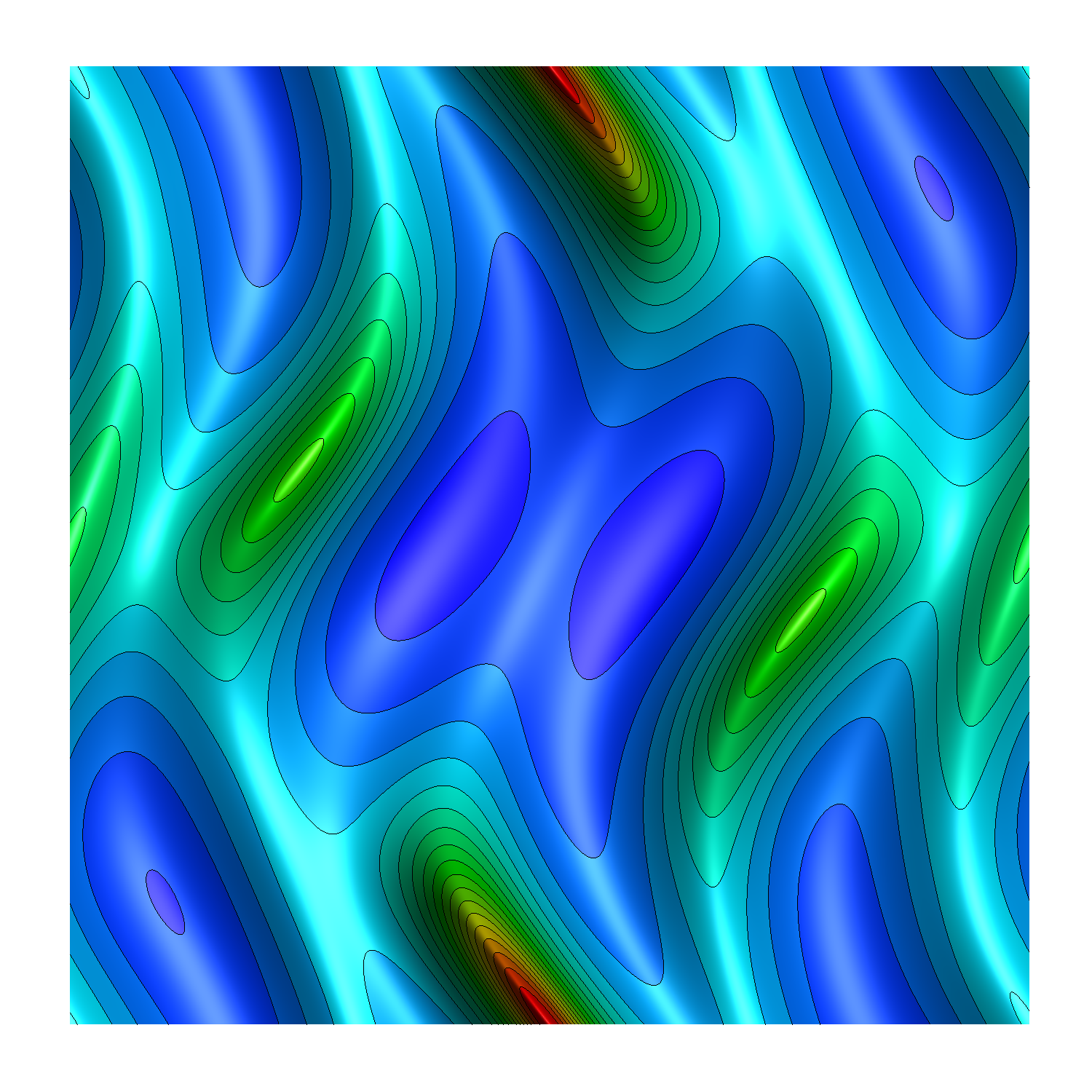} & 
			\includegraphics[trim=10 10 10 10,clip,width=0.45\textwidth]{./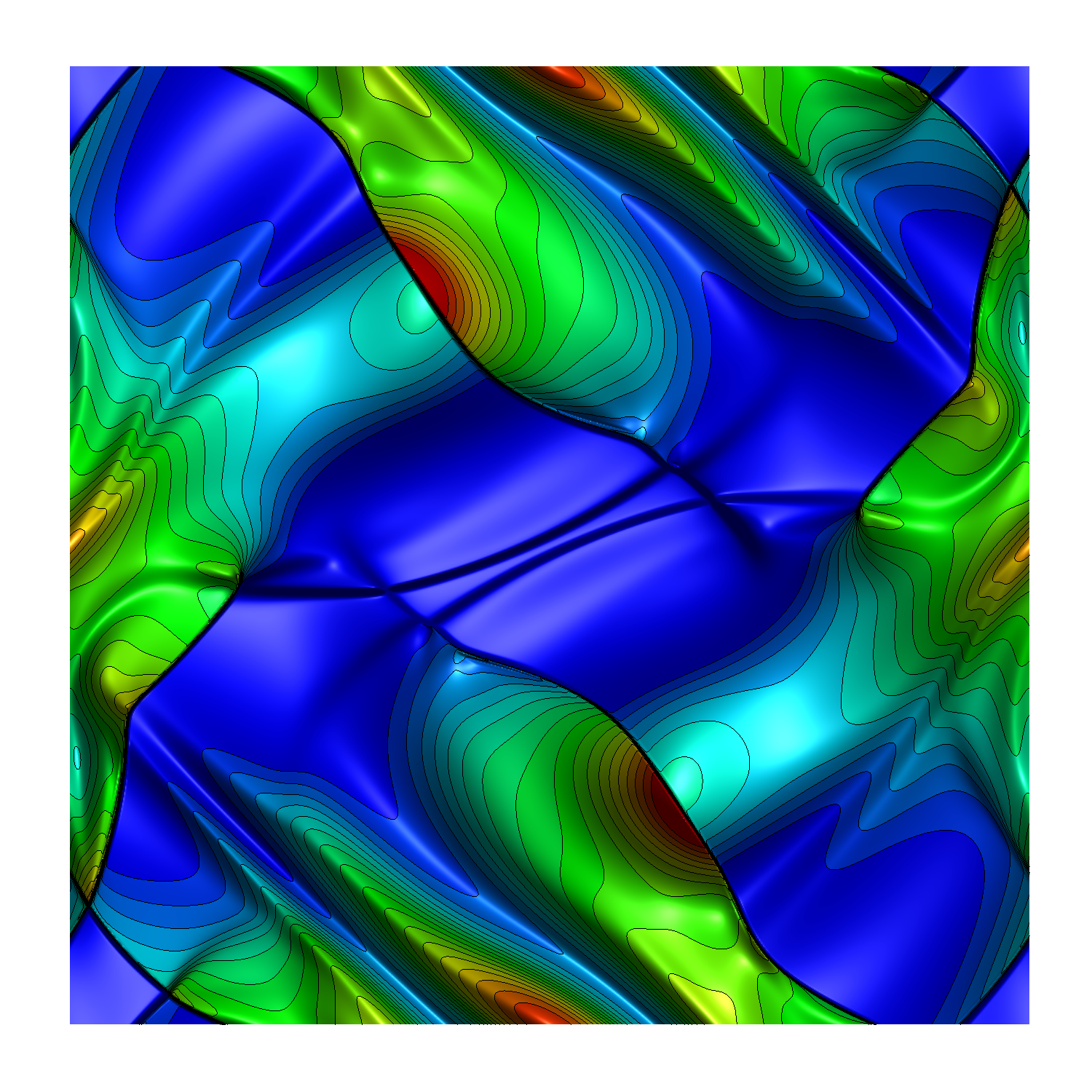}   \\ 
			\includegraphics[trim=10 10 10 10,clip,width=0.45\textwidth]{./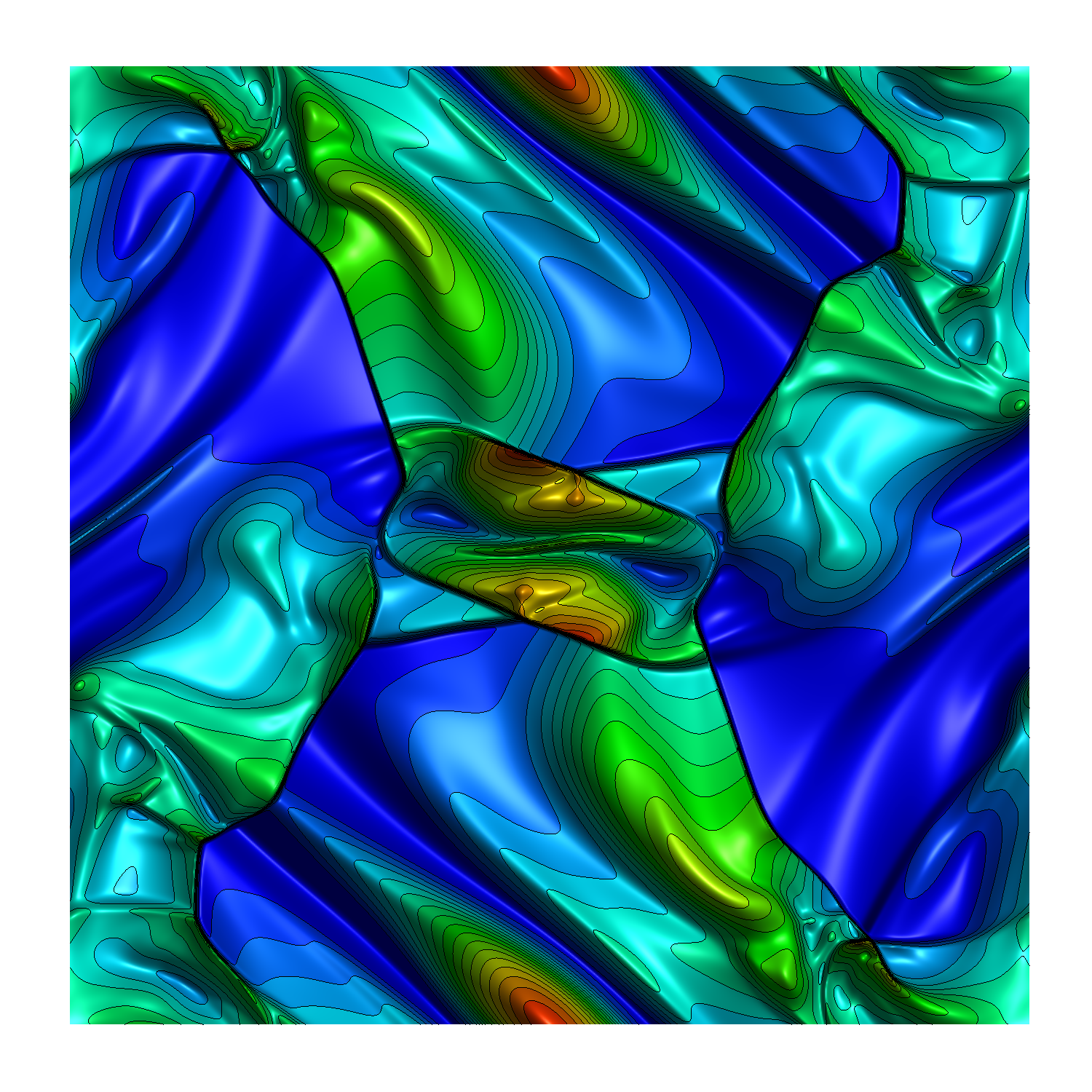} & 
			\includegraphics[trim=10 10 10 10,clip,width=0.45\textwidth]{./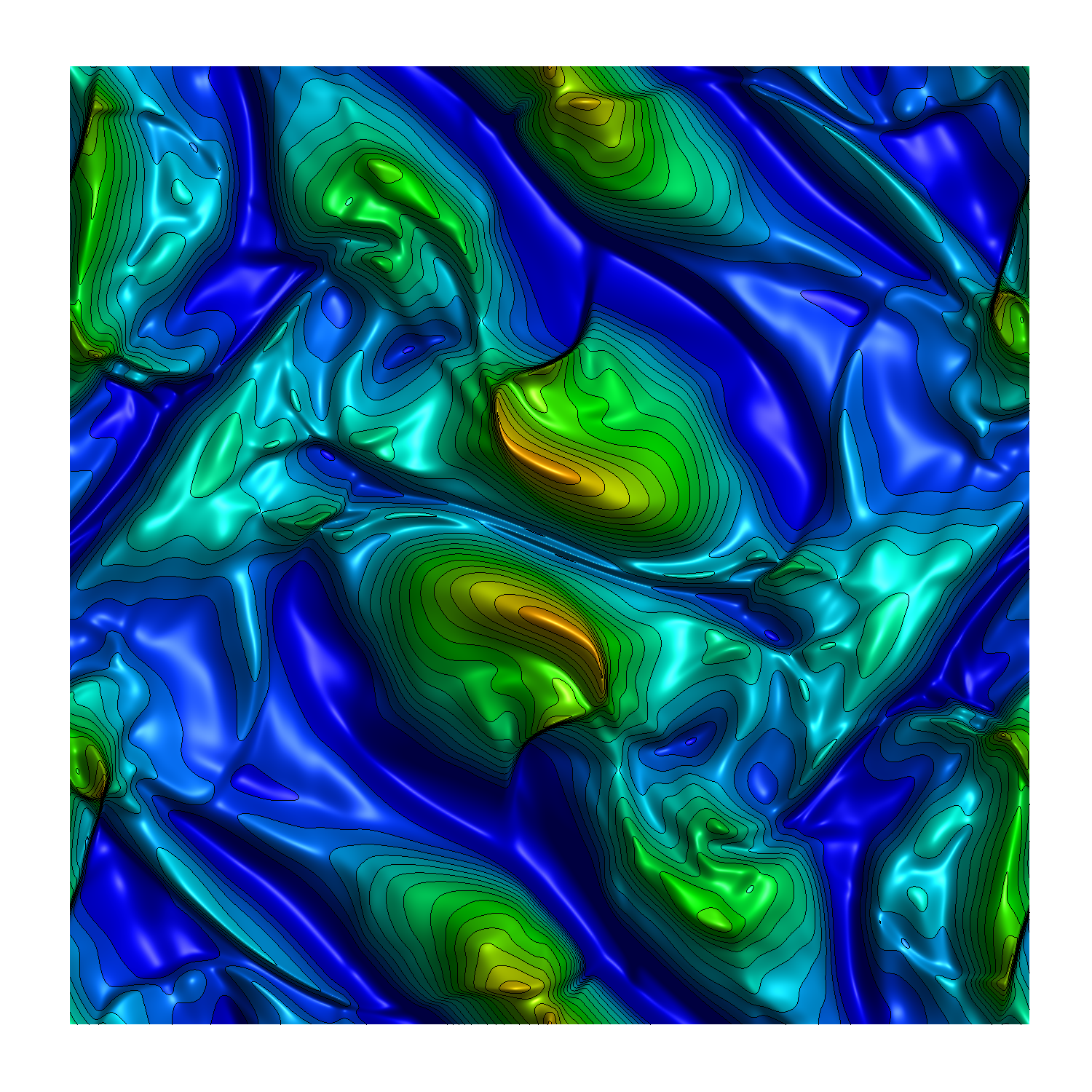}    
		\end{tabular} 
		\caption{Density contours obtained with the new semi-discrete thermodynamically comaptible finite volume scheme for the inviscid Orszag-Tang vortex system at time $t=0.5$ (top left), 
			$t=2.0$ (top right), $t=3.0$ (bottom left) and $t=5.0$ (bottom right). } 
		\label{fig.otv}
	\end{center}
\end{figure}
\subsection{MHD rotor problem} 
\label{sec.rotor} 
The MHD rotor problem is a classical MHD benchmark and was introduced for the first time in \cite{BalsaraSpicer1999}. The initial pressure and the magnetic field are set to constant values in the entire domain and are chosen as $p=1$ and $\mathbf{B} = ( 2.5 / \sqrt{4 \pi}, 0, 0)^T$.   
The computational domain $\Omega = [-0.5,0.5]^2$ is discretized at the aid of a Cartesian mesh of $1000 \times 1000$ cells. For $0 \leq \left\| \mathbf{x} \right\| \leq 0.1$, the initial density and velocity are set to $\rho=10$ and $\mathbf{v} = \boldsymbol{\omega} \times \mathbf{x}$ with  $\boldsymbol{\omega}=(0,0,10)$, respectively, while for $\left\| \mathbf{x} \right\| > 0.1$ the density and velocity are $\rho=1$ and  $\mathbf{v}=(0,0,0)$. The numerical results obtained for $\gamma=1.4$ and $\epsilon = 10^{-4}$ 
using the new HTC finite volume method are depicted in Figure \ref{fig.rotor} at time $t=0.25$ for the density, the pressure, the Mach number and the magnetic pressure. Comparison with results on the  available literature show a good qualitatively agreement, see e.g. \cite{BalsaraSpicer1999,balsarahlle2d,ADERdivB,SIMHD}. 
\begin{figure}[!htbp]
	\begin{center}
		\begin{tabular}{cc} 
			\includegraphics[trim=10 10 10 10,clip,width=0.45\textwidth]{./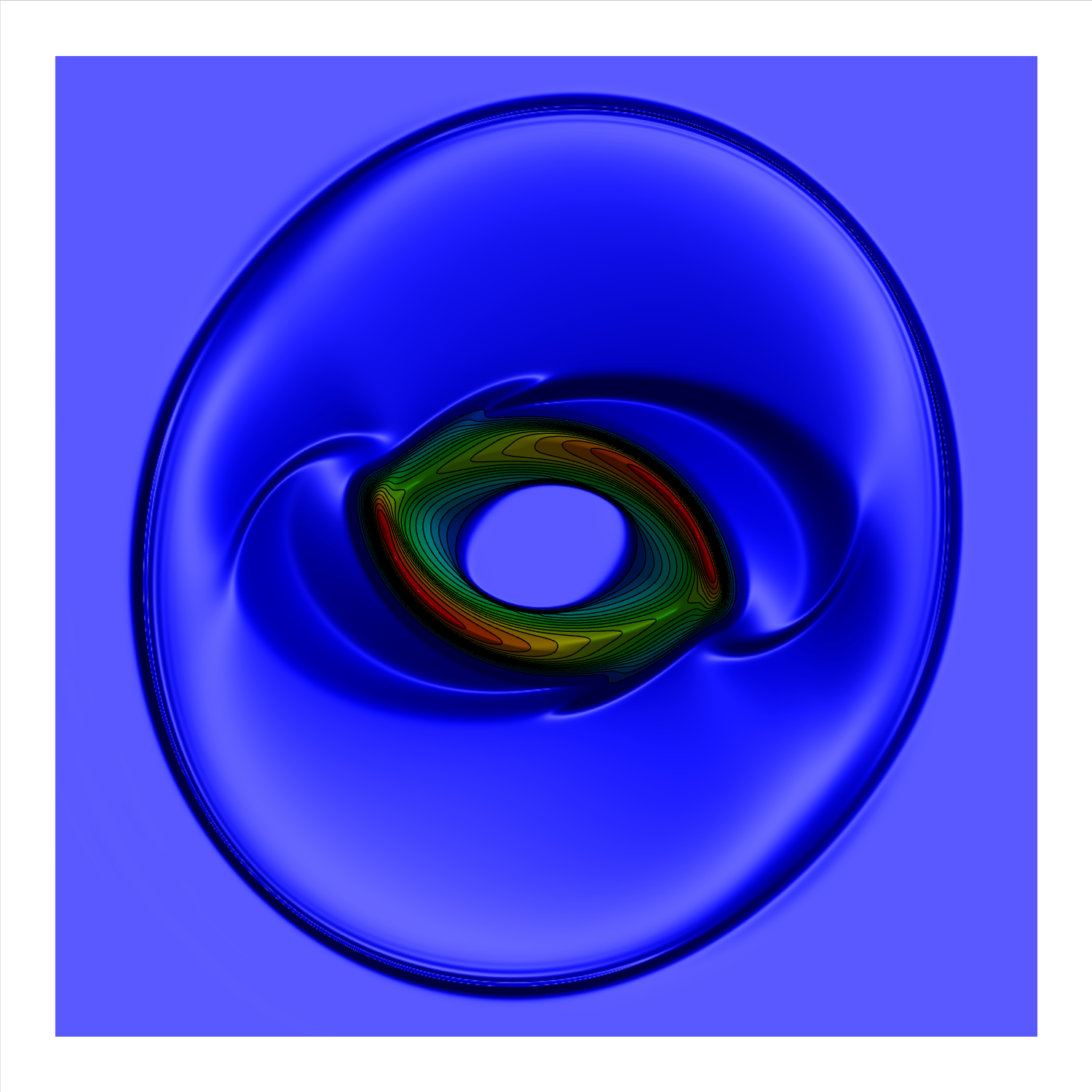} & 
			\includegraphics[trim=10 10 10 10,clip,width=0.45\textwidth]{./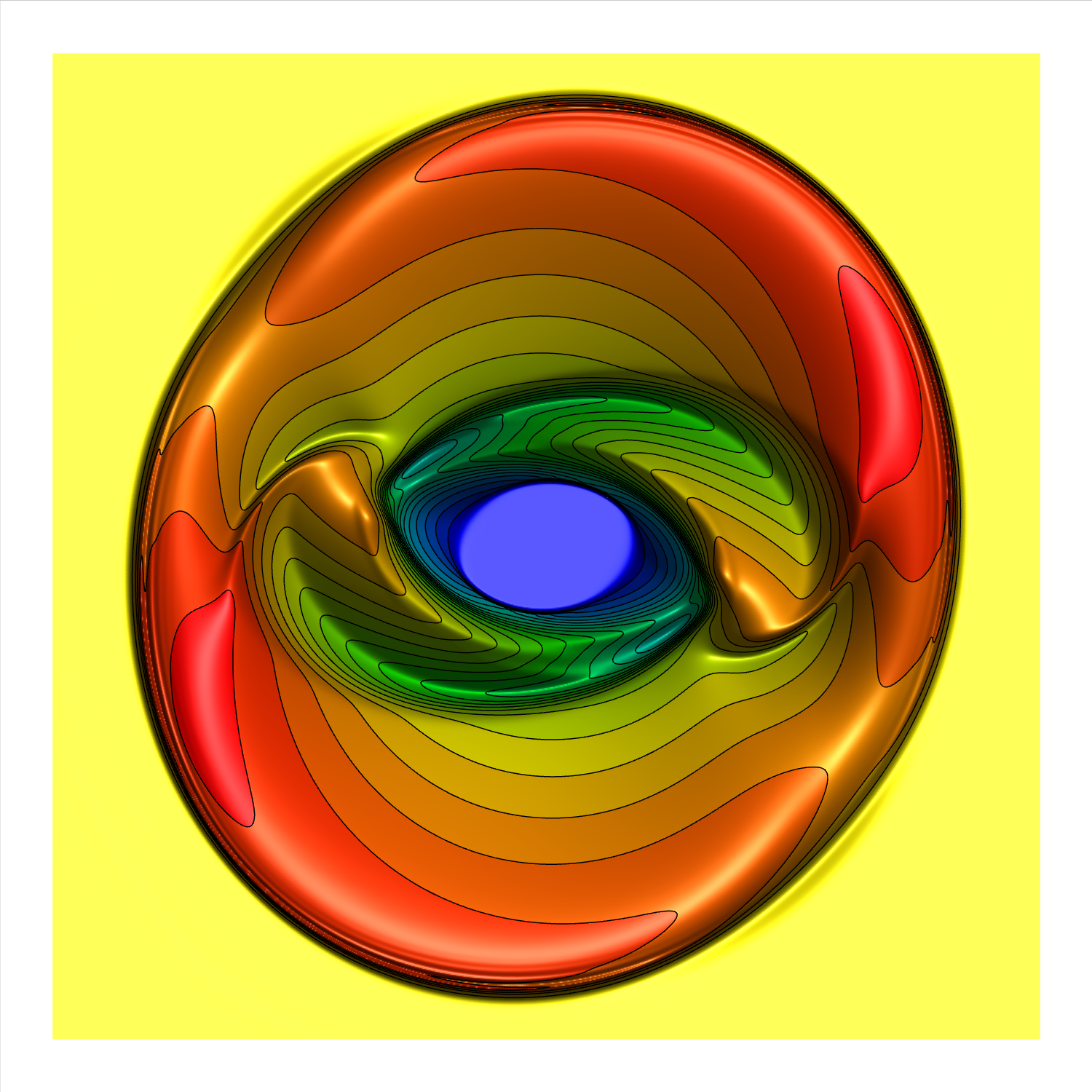}   \\ 
			\includegraphics[trim=10 10 10 10,clip,width=0.45\textwidth]{./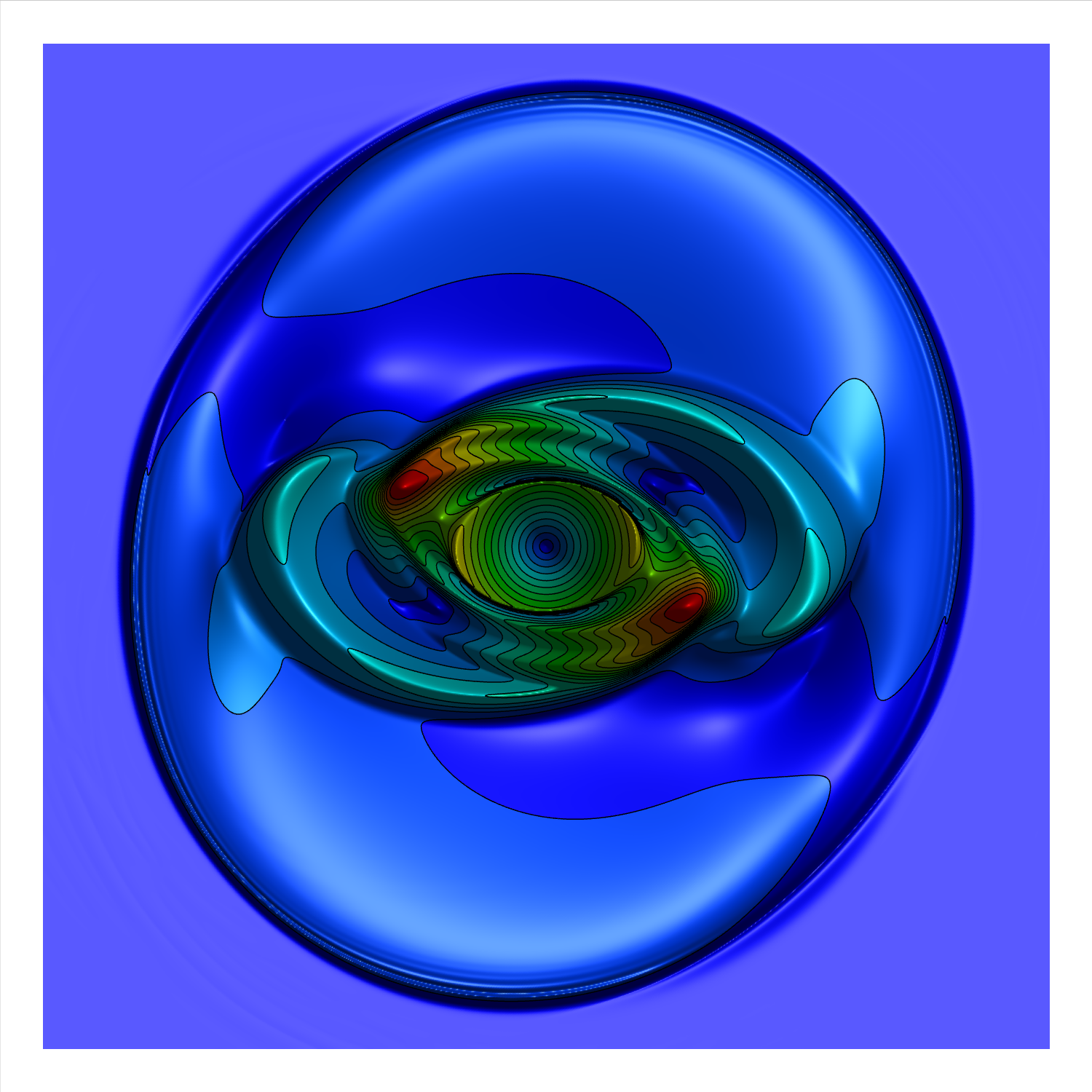} & 
			\includegraphics[trim=10 10 10 10,clip,width=0.45\textwidth]{./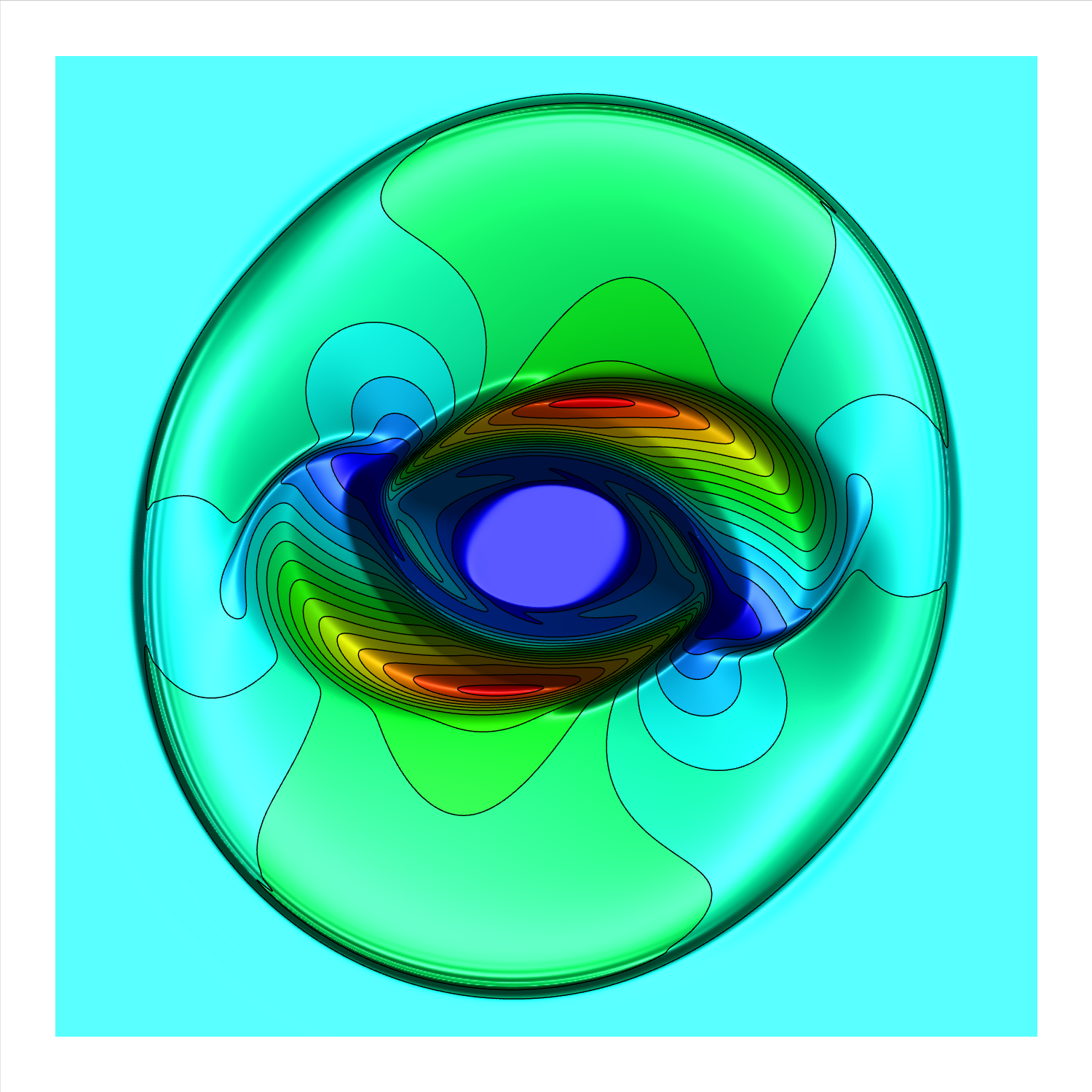}    
		\end{tabular} 
		\caption{Numerical solution obtained with the HTC finite volume method for the MHD rotor problem at time $t=0.25$. Contour lines of density (top left), pressure (top right), Mach number (bottom left) and magnetic pressure (bottom right). }  
		\label{fig.rotor}
	\end{center}
\end{figure}
\textcolor{black}{In the following we provide some numerical evidence that in our scheme the discrete entropy inequality, according to Theorem \ref{thm.entropy}, is satisfied. For this purpose in Figure \ref{fig.entropy} we plot the time series of the integral of the entropy density for the present MHD rotor problem and for the previous Orszag-Tang vortex system. As one can observe, the physical entropy is never decreasing in time, as expected. }
\begin{figure}[!htbp]
	\begin{center}
		\begin{tabular}{cc} 
			\includegraphics[width=0.45\textwidth]{./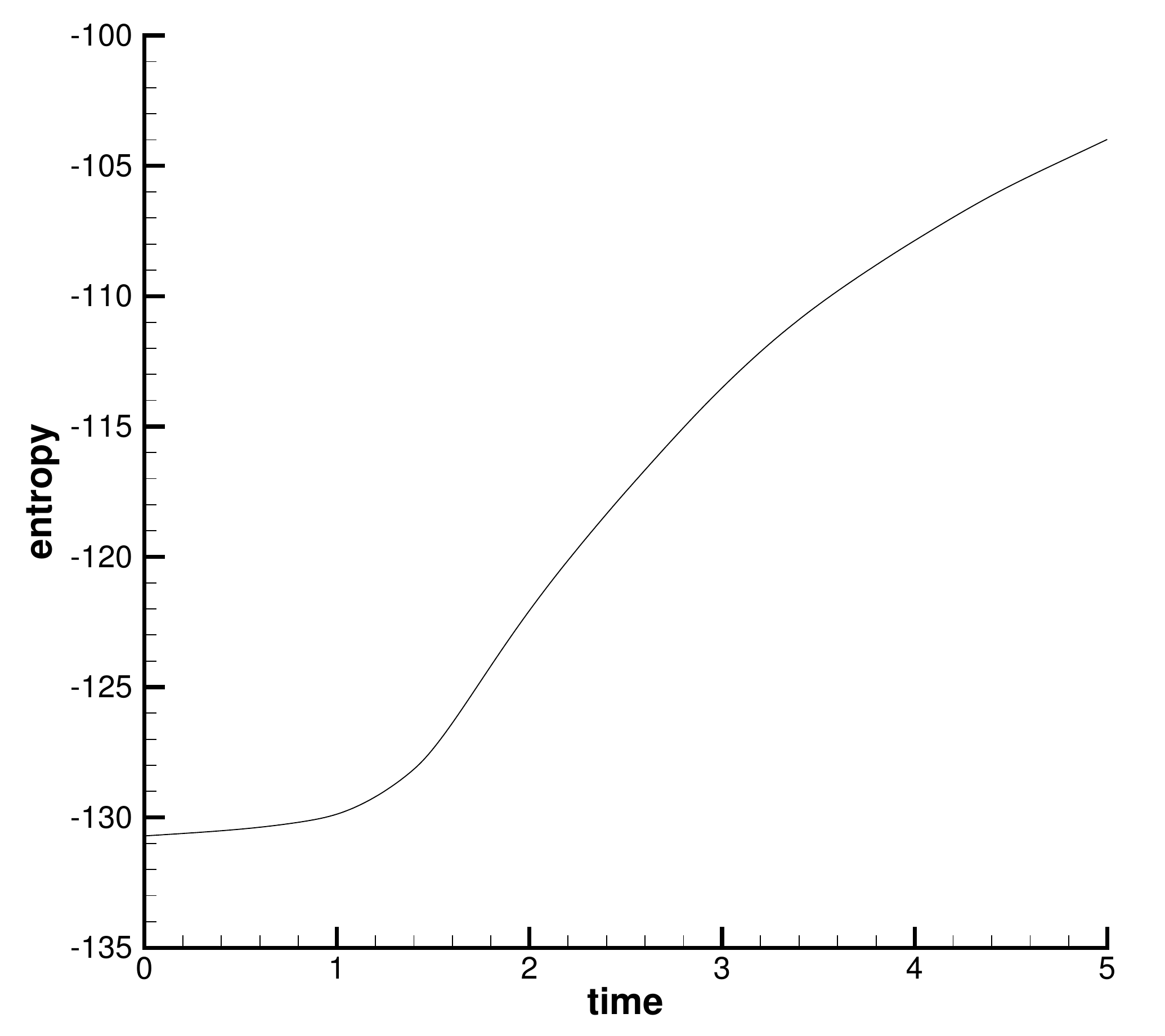} & 
			\includegraphics[width=0.45\textwidth]{./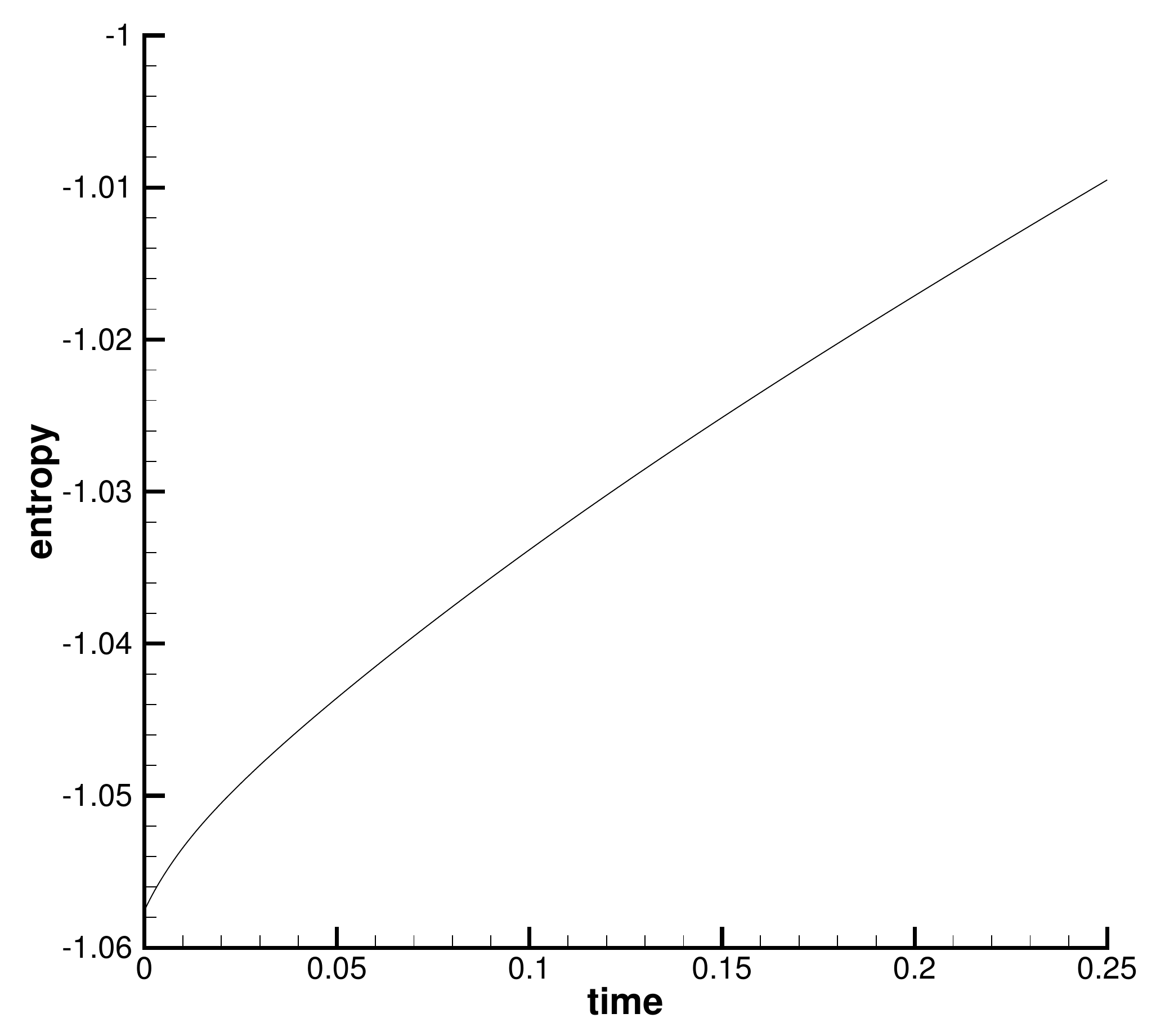}  
		\end{tabular} 
		\caption{\textcolor{black}{Time series of the integral of the physical entropy density over the computational domain. Left: Orszag-Tang vortex system. Right: MHD rotor problem. In both cases, the physical entropy is never decreasing, as expected. } }  
		\label{fig.entropy}
	\end{center}
\end{figure}

\subsection{MHD blast wave problem} 
\label{sec.blast} 
The last test case under consideration is the MHD blast wave problem introduced in \cite{BalsaraSpicer1999}, which is well known to be very challenging for numerical methods.  
Initially the density, velocity and magnetic field are set to constant values $\rho=1$, $\mathbf{v}=(0,0,0)$ and $\mathbf{B}=(100/\sqrt{4 \pi},0,0)$.   
The initial pressure jumps over four orders of magnitude and is set to $p=1000$ for $\left\| \mathbf{x} \right\| < 0.1$ and to $p=0.1$ everywhere else. We set $\gamma=1.4$ and $\epsilon=5 \cdot 10^{-3}$.     
The domain $\Omega=[-0.5,0.5]^2$ is discretized with a uniform Cartesian grid composed of $1000 \times 1000$ cells. The results obtained with the new HTC finite volume scheme are shown in Figure \ref{fig.blast} at time $t=0.01$ for the density, the pressure, the velocity magnitude and the magnetic pressure. The results agree qualitatively well with those of the literature, see \cite{BalsaraSpicer1999,ADERdivB,SIMHD}.   

\begin{figure}[!htbp]
	\begin{center}
		\begin{tabular}{cc} 
			\includegraphics[trim=10 10 10 10,clip,width=0.45\textwidth]{./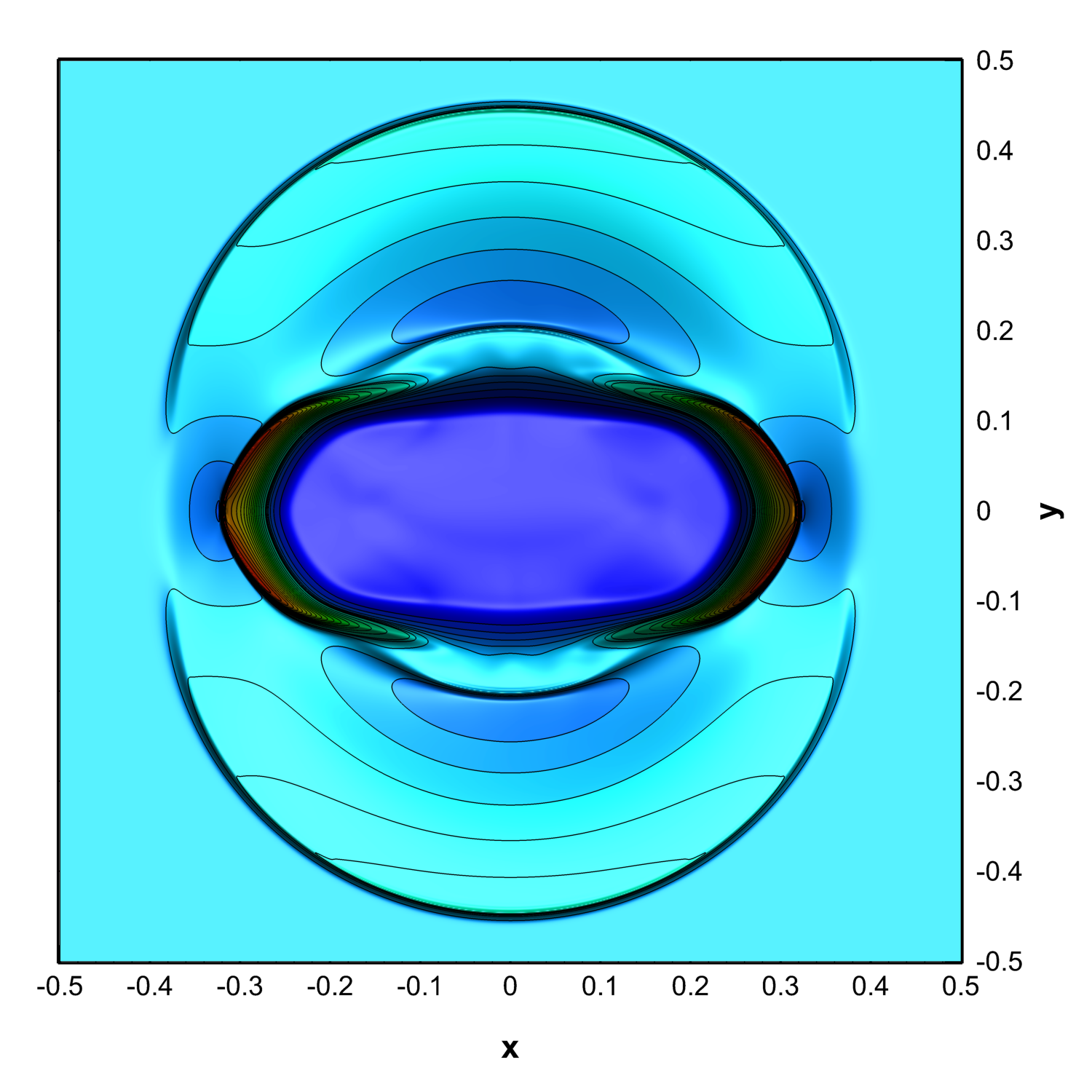} & 
			\includegraphics[trim=10 10 10 10,clip,width=0.45\textwidth]{./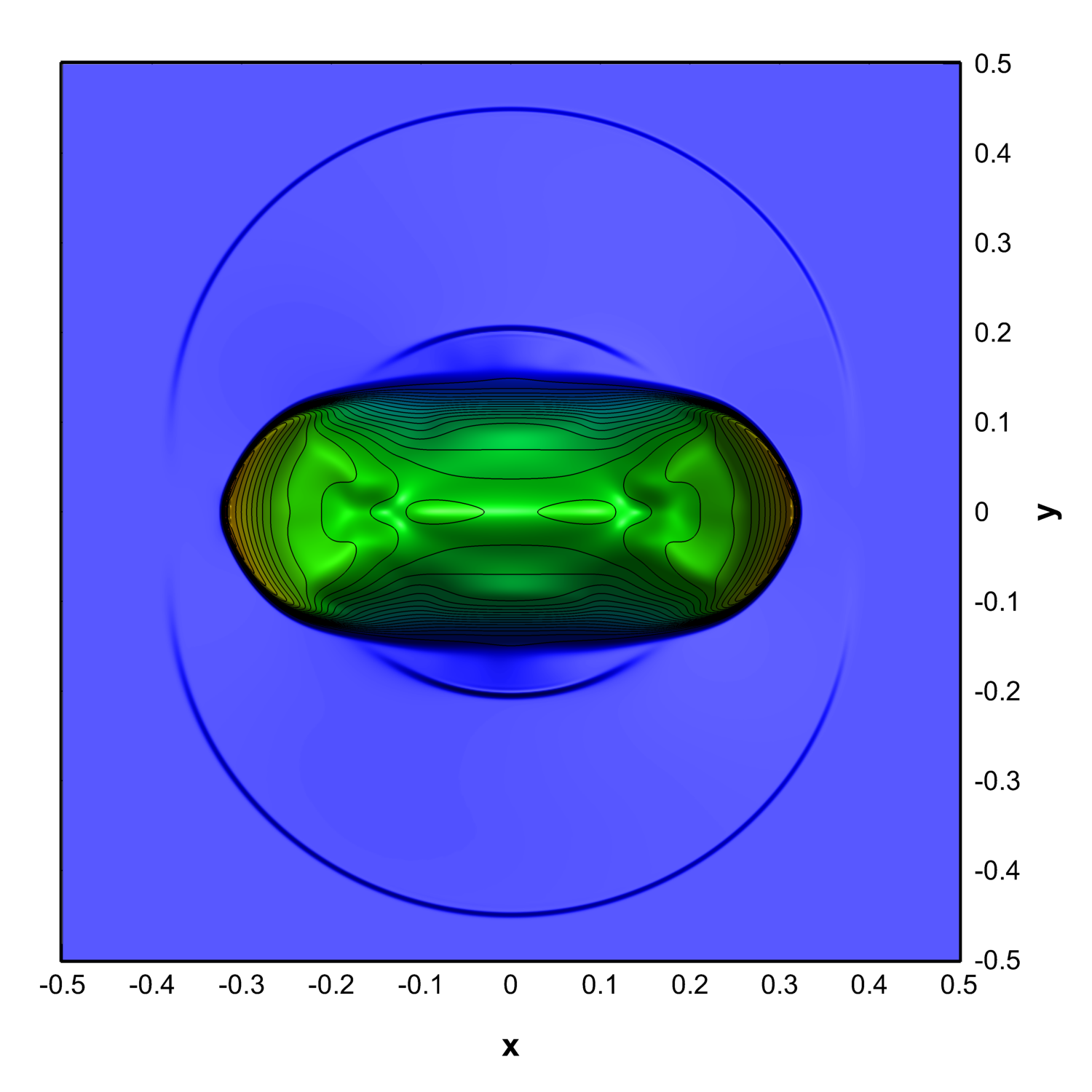}   \\ 
			\includegraphics[trim=10 10 10 10,clip,width=0.45\textwidth]{./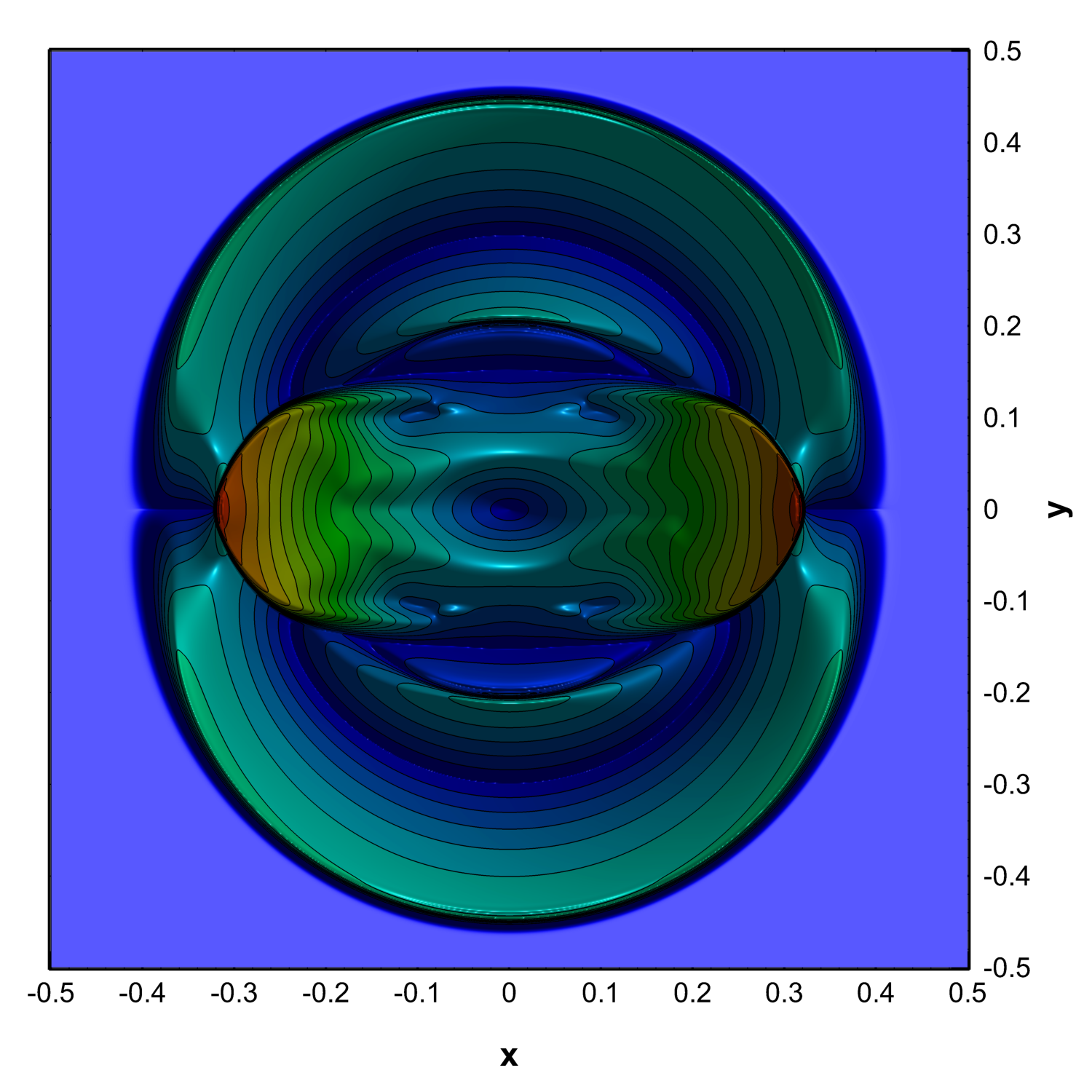} & 
			\includegraphics[trim=10 10 10 10,clip,width=0.45\textwidth]{./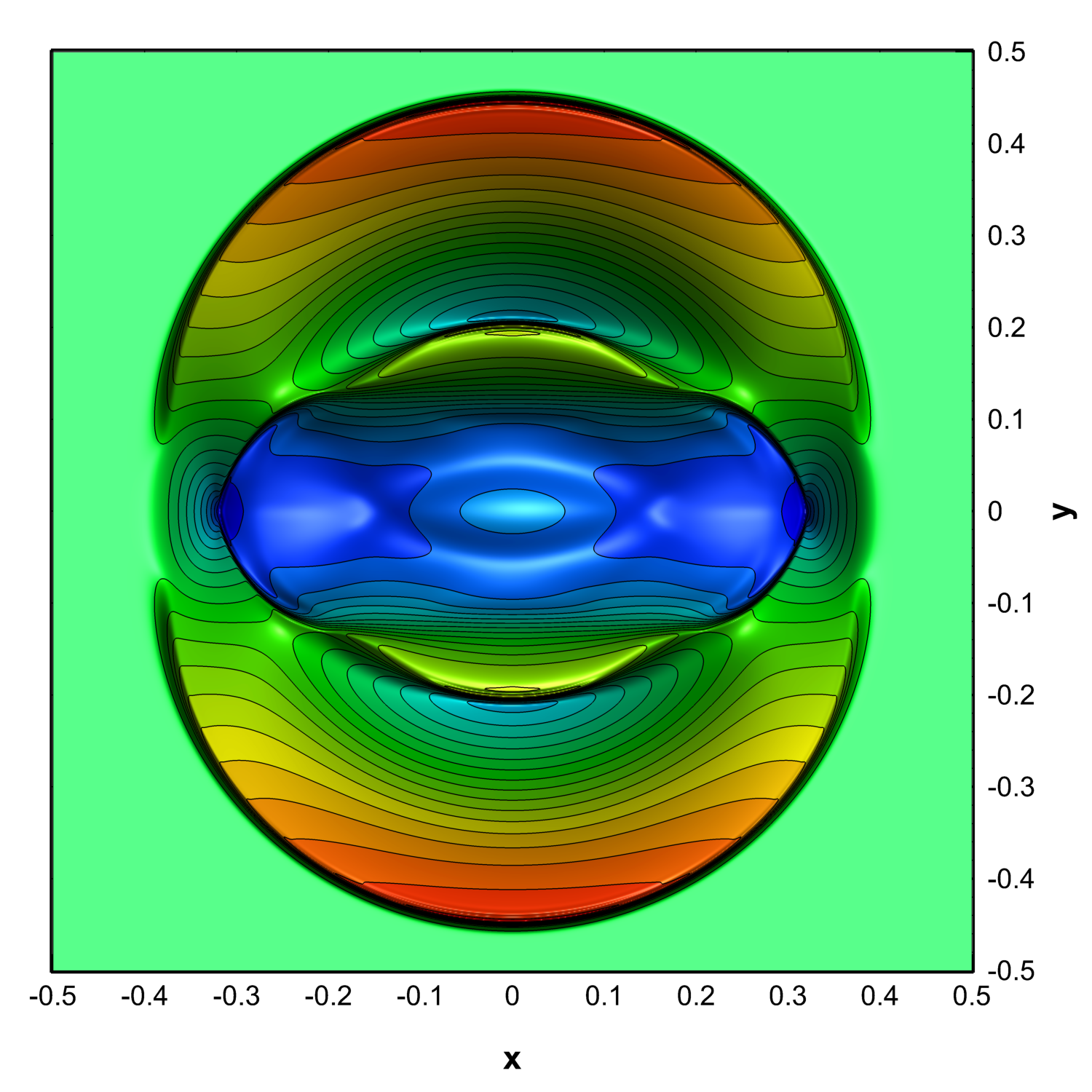}    
		\end{tabular} 
		\caption{Numerical solution obtained with the new semi-discrete thermodynamically comaptible finite volume scheme for the MHD blast wave problem at time $t=0.01$. Contour lines of density (top left), pressure (top right), velocity magnitude (bottom left) and magnetic pressure (bottom right). } 
		\label{fig.blast}
	\end{center}
\end{figure}

\section{Conclusions}
\label{sec.Conclusions}

In this paper, we have presented a new thermodynamically compatible semi-discrete finite volume scheme (HTC scheme) for the equations of ideal magnetohydrodynamics (MHD). Unlike classical finite volume schemes for MHD the new method directly discretizes the \textit{entropy inequality} and \textit{not} the total energy conservation law. The discrete total energy conservation is instead achieved as a mere \textit{consequence} of a suitable and thermodynamically compatible discretization of all the other equations. The divergence-free constraint of the magnetic field was taken into account at the aid of a hyperbolic and thermodynamically compatible GLM divergence cleaning, following the seminal ideas of Munz \textit{et al.} \cite{MunzCleaning,Dedneretal} on hyperbolic GLM techniques for the preservation of involution constraints in hyperbolic PDE systems. 
The finite volume scheme proposed in this paper satisfies a discrete entropy inequality \textit{by construction} and can be proven to be \textit{nonlinearly stable} in the energy norm, as total energy is  conserved \textcolor{black}{up to errors due to numerical quadrature and the time discretization, see \cite{HTCGPR} for details on the influence of the numerical quadrature and the time discretization on energy conservation errors}. 
The new method has been shown to be second order accurate and has been applied to some classical MHD benchmark problems in one and two space dimensions, obtaining a good agreement with existing reference solutions available in the literature. 

Future work will concern the development of suitable \textit{symplectic} time integrators, in order to conserve the discrete total energy also exactly on the fully discrete level, see e.g. \cite{Brugnano1,Brugnano2}. 
Further research is needed to extend the present scheme to higher order in space within the discontinuous Galerkin (DG) finite element framework, similar to the entropy compatible DG schemes introduced in \cite{GassnerEntropyGLM,ShuEntropyMHD2,GassnerSWE}.   
Another major challenge that is left to future work is the development of thermodynamically compatible finite volume methods that also preserve the divergence constraint of the magnetic field \textit{exactly} at the semi-discrete level. For that purpose, we will consider face-based / edge-based \textit{staggered} meshes as in 
 \cite{SIMHD} and \cite{Hybrid2}. Last but not least, we plan to develop new thermodynamically compatible FV schemes for the MHD equations using the general framework introduced by Abgrall in \cite{Abgrall2018}.

\section*{Acknowledgments}

S.B. and M.D. are members of the INdAM GNCS group and acknowledge the financial support received from  
the Italian Ministry of Education, University and Research (MIUR) in the frame of the PRIN 2017 project \textit{Innovative numerical methods for evolutionary partial differential equations and  applications}
and from the Spanish Ministry of Science and Innovation, grant number PID2021-122625OB-I00. 
S.B. was also funded by INdAM via a GNCS grant for young researchers and by an \textit{UniTN starting grant} of the University of Trento. 
\textcolor{black}{The authors are very grateful to the two anonymous referees for their constructive and insightful comments, which helped to improve the clarity and quality of this paper. }

\bibliographystyle{plain}
\bibliography{biblio}

\begin{thebibliography}{10}

\bibitem{Abgrall2018}
R.~Abgrall.
\newblock {A general framework to construct schemes satisfying additional
  conservation relations. Application to entropy conservative and entropy
  dissipative schemes}.
\newblock {\em J. Comput. Phys.}, 372:640--666, 2018.

\bibitem{AbgrallBT2018}
R.~Abgrall, P.~Bacigaluppi, and S.~Tokareva.
\newblock A high-order nonconservative approach for hyperbolic equations in
  fluid dynamics.
\newblock {\em Computers and Fluids}, 169:10--22, 2018.

\bibitem{Balsara2004}
D.~Balsara.
\newblock Second-order accurate schemes for magnetohydrodynamics with
  divergence-free reconstruction.
\newblock {\em The Astrophysical Journal Supplement Series}, 151:149--184,
  2004.

\bibitem{BalsaraSpicer1999}
D.~Balsara and D.~Spicer.
\newblock A staggered mesh algorithm using high order {G}odunov fluxes to
  ensure solenoidal magnetic fields in magnetohydrodynamic simulations.
\newblock {\em J. Comput. Phys.}, 149:270--292, 1999.

\bibitem{balsarahlle2d}
D.S. Balsara.
\newblock {Multidimensional HLLE Riemann solver: Application to Euler and
  magnetohydrodynamic flows}.
\newblock {\em J. Comput. Phys.}, 229:1970--1993, 2010.

\bibitem{ADERdivB}
D.S. Balsara and M.~Dumbser.
\newblock {Divergence-free MHD on unstructured meshes using high order finite
  volume schemes based on multidimensional {R}iemann solvers}.
\newblock {\em Journal of Computational Physics}, 299:687--715, 2015.

\bibitem{Brugnano1}
L.~Brugnano and F.~Iavernaro.
\newblock {\em Line Integral Methods for Conservative Problems}.
\newblock Chapman et Hall/CRC, Boca Raton, 2016.

\bibitem{Brugnano2}
L.~Brugnano and F.~Iavernaro.
\newblock Line integral solution of differential problems.
\newblock {\em Axioms}, 7(2):36, 2018.

\bibitem{HyperbolicDispersion}
S.~Busto, M.~Dumbser, C.~Escalante, S.~Gavrilyuk, and N.~Favrie.
\newblock {On high order ADER discontinuous Galerkin schemes for first order
  hyperbolic reformulations of nonlinear dispersive systems}.
\newblock {\em J. Sci. Comput.}, 87:48, 2021.

\bibitem{SWETurbulence}
S.~Busto, M.~Dumbser, S.~Gavrilyuk, and K.~Ivanova.
\newblock {On thermodynamically compatible finite volume methods and
  path-conservative ADER discontinuous Galerkin schemes for turbulent shallow
  water flows}.
\newblock {\em J. Sci. Comput.}, 88:28, 2021.

\bibitem{HTCGPR}
S.~Busto, M.~Dumbser, I.~Peshkov, and E.~Romenski.
\newblock {On thermodynamically compatible finite volume schemes for continuum
  mechanics}.
\newblock {\em SIAM J. Sci. Comput.}
\newblock in press.

\bibitem{Hybrid2}
S.~Busto, L.~R\'io-Mart\'in, M.E. V\'azquez-Cend\'on, and M.~Dumbser.
\newblock A semi-implicit hybrid finite volume / finite element scheme for all
  {Mach} number flows on staggered unstructured meshes.
\newblock {\em Appl. Math. Comput.}, 402:126117, 2021.

\bibitem{CastroFjordholm}
M.~J. Castro, U.~S. Fjordholm, S.~Mishra, and C.~Parés.
\newblock Entropy conservative and entropy stable schemes for nonconservative
  hyperbolic systems.
\newblock {\em SIAM Journal on Numerical Analysis}, 51(3):1371--1391, 2013.

\bibitem{Castro2006}
M.J. Castro, J.M. Gallardo, and C.~Par\'es.
\newblock High-order finite volume schemes based on reconstruction of states
  for solving hyperbolic systems with nonconservative products. {A}pplications
  to shallow-water systems.
\newblock {\em Math. Comput.}, 75:1103--1134, 2006.

\bibitem{ChandrashekarKlingenberg2016}
P.~Chandrashekar and C.~Klingenberg.
\newblock Entropy stable finite volume scheme for ideal compressible {MHD} on
  {2-D Cartesian} meshes.
\newblock {\em SIAM Journal on Numerical Analysis}, 54(2):1313--1340, 2016.

\bibitem{ShuEntropyMHD1}
T.~Cheng and C.W. Shu.
\newblock {Entropy stable high order discontinuous Galerkin methods with
  suitable quadrature rules for hyperbolic conservation laws}.
\newblock {\em J. Comput. Phys.}, 345:427--461, 2017.

\bibitem{Dedneretal}
A.~Dedner, F.~Kemm, D.~Kr\"oner, C.-D. Munz, T.~Schnitzer, and M.~Wesenberg.
\newblock Hyperbolic divergence cleaning for the {MHD} equations.
\newblock {\em J. Comput. Phys.}, 175:645--673, 2002.

\bibitem{GassnerEntropyGLM}
D.~Derigs, A.~R. Winters, G.~Gassner, S.~Walch, and M.~Bohm.
\newblock {Ideal GLM-MHD: About the entropy consistent nine-wave magnetic field
  divergence diminishing ideal magnetohydrodynamics equations}.
\newblock {\em J. Comput. Phys.}, 364:420--467, 2018.

\bibitem{SIMHD}
M.~Dumbser, D.S. Balsara, M.~Tavelli, and F.~Fambri.
\newblock {A divergence-free semi-implicit finite volume scheme for ideal,
  viscous and resistive magnetohydrodynamics}.
\newblock {\em International Journal for Numerical Methods in Fluids},
  89:16--42, 2019.

\bibitem{OsherUniversal}
M.~Dumbser and E.F. Toro.
\newblock On universal {Osher}--type schemes for general nonlinear hyperbolic
  conservation laws.
\newblock {\em Communications in Computational Physics}, 10:635--671, 2011.

\bibitem{OsherNC}
M.~Dumbser and E.F. Toro.
\newblock A simple extension of the {Osher} {Riemann} solver to
  non-conservative hyperbolic systems.
\newblock {\em J. Sci. Comput.}, 48:70--88, 2011.

\bibitem{falle2}
S.A.E.G. Falle.
\newblock Rarefaction shocks, shock errors and low order of accuracy in {ZEUS}.
\newblock {\em The Astrophysical Journal}, 577:L123--L126, 2002.

\bibitem{fallemhd}
S.A.E.G. Falle and S.S. Komissarov.
\newblock On the inadmissibility of non-evolutionary shocks.
\newblock {\em Journal of Plasma Physics}, 65:29--58, 2001.

\bibitem{FjordholmMishraTadmor}
U.~S. Fjordholm, S.~Mishra, and E.~Tadmor.
\newblock Arbitrarily high-order accurate entropy stable essentially
  nonoscillatory schemes for systems of conservation laws.
\newblock {\em SIAM Journal on Numerical Analysis}, 50(2):544--573, 2012.

\bibitem{Fjordholm2012}
U.S. Fjordholm and S.~Mishra.
\newblock Accurate numerical discretizations of non-conservative hyperbolic
  systems.
\newblock {\em ESAIM Math. Model. Numer. Anal.}, 46(1):187--206, 2012.

\bibitem{FriedrichsSymm}
K.O. Friedrichs.
\newblock {Symmetric positive linear differential equations}.
\newblock {\em Comm. Pure Appl. Math.}, 11:333--418, 1958.

\bibitem{FriedrichsLax}
K.O. Friedrichs and P.D. Lax.
\newblock {Systems of conservation equations with a convex extension}.
\newblock {\em Proc. Nat. Acad. Sci. USA}, 68:1686--1688, 1971.

\bibitem{GassnerSWE}
G.~Gassner, A.R. Winters, and D.A. Kopriva.
\newblock {A well balanced and entropy conservative discontinuous Galerkin
  spectral element method for the shallow water equations}.
\newblock {\em Appl. Math. Comput.}, 272:291--308, 2016.

\bibitem{Godunov2012}
S.~K. Godunov.
\newblock {Thermodynamic formalization of the fluid dynamics equations for a
  charged dielectric in an electromagnetic field}.
\newblock {\em Comput. Math. Math. Phys.}, 52:787--799, 2012.

\bibitem{God1961}
S.K. Godunov.
\newblock An interesting class of quasilinear systems.
\newblock {\em Dokl. Akad. Nauk SSSR}, 139(3):521--523, 1961.

\bibitem{God1972MHD}
S.K. Godunov.
\newblock Symmetric form of the equations of magnetohydrodynamics.
\newblock {\em Numerical Methods for Mechanics of Continuous Media},
  3(1):26--31, 1972.

\bibitem{GodunovRomenski72}
S.K. Godunov and E.I. Romenski.
\newblock Nonstationary equations of the nonlinear theory of elasticity in
  {Euler} coordinates.
\newblock {\em J. Appl. Mech. Tech. Phys.}, 13:868--885, 1972.

\bibitem{GodRom2003}
S.K. Godunov and E.I. Romenski.
\newblock {\em Elements of continuum mechanics and conservation laws}.
\newblock Kluwer Academic/Plenum Publishers, 2003.

\bibitem{Hennemann2021}
S.~Hennemann, A.M. Rueda-Ram\'irez, F.J. Hindenlang, and G.J. Gassner.
\newblock {A provably entropy stable subcell shock capturing approach for high
  order split form DG for the compressible Euler equations}.
\newblock {\em J. Comput. Phys.}, 426, 2021.

\bibitem{JiangWu}
G.S. Jiang and C.C. Wu.
\newblock A high-order {WENO} finite difference scheme for the equations of
  ideal magnetohydrodynamics.
\newblock {\em J. Comput. Phys.}, 150:561--594, 1999.

\bibitem{ShuEntropyMHD2}
Y.~Liu, C.W. Shu, and M.~Zhang.
\newblock {Entropy stable high order discontinuous Galerkin methods for ideal
  compressible MHD on structured meshes}.
\newblock {\em J. Comput. Phys.}, 354:163--178, 2018.

\bibitem{MunzCleaning}
C.D. Munz, P.~Omnes, R.~Schneider, E.~Sonnendr\"ucker, and U.~Voss.
\newblock Divergence correction techniques for {Maxwell}l solvers based on a
  hyperbolic model.
\newblock {\em J. Comput. Phys.}, 161:484--511, 2000.

\bibitem{Pares2006}
C.~Par\'es.
\newblock Numerical methods for nonconservative hyperbolic systems: a
  theoretical framework.
\newblock {\em SIAM J. Numer. Anal.}, 44:300--321, 2006.

\bibitem{Ray2017}
D.~Ray and P.~Chandrashekar.
\newblock An entropy stable finite volume scheme for the two dimensional
  {Navier–Stokes} equations on triangular grids.
\newblock {\em Applied Mathematics and Computation}, 314:257--286, 2017.

\bibitem{ray2016}
D.~Ray, P.~Chandrashekar, U.~S. Fjordholm, and S.~Mishra.
\newblock Entropy stable scheme on two-dimensional unstructured grids for euler
  equations.
\newblock {\em Communications in Computational Physics}, 19(5):1111--1140,
  2016.

\bibitem{RomenskiTwoPhase2010}
E.~Romenski, D.~Drikakis, and E.F. Toro.
\newblock {Conservative models and numerical methods for compressible two-phase
  flow}.
\newblock {\em J. Sci. Comput.}, 42:68--95, 2010.

\bibitem{GRGPR}
E.~Romenski, I.~Peshkov, M.~Dumbser, and F.~Fambri.
\newblock {A new continuum model for general relativistic viscous
  heat-conducting media}.
\newblock {\em Philos. Trans. R. Soc. A}, 378:20190175, 2020.

\bibitem{Rom1998}
E.I. Romenski.
\newblock Hyperbolic systems of thermodynamically compatible conservation laws
  in continuum mechanics.
\newblock {\em Math. Comput. Modell.}, 28(10):115--130, 1998.

\bibitem{Tadmor1}
E.~Tadmor.
\newblock {The numerical viscosity of entropy stable schemes for systems of
  conservation laws I}.
\newblock {\em Math. Comput.}, 49:91--103, 1987.

\bibitem{toro-book}
E.F. Toro.
\newblock {\em {Riemann} Solvers and Numerical Methods for Fluid Dynamics}.
\newblock Springer, 2009.

\end{thebibliography}
\end{document}